\theoremstyle{plain}
\newtheorem{theorem}{Theorem}[section]
\newtheorem{lemma}{Lemma}[section]
\newtheorem{proposition}{Proposition}[section]
\theoremstyle{definition}
\newtheorem{definition}{Definition}[section]
\newtheorem{remark}{Remark}[section]
\newcommand{\bx}{{\mathbf x}}
\newcommand{\bX}{{\mathbf X}}
\newcommand{\nn}{\nonumber}
\newcommand{\be} {\begin{equation}}
\newcommand{\ee}{\end{equation}}
\newcommand{\ba}{\begin{array} }
\newcommand{\ea}{\end{array}}
\newcommand{\bea}{\begin{eqnarray}}
\newcommand{\eea}{\end{eqnarray} }
\newcommand{\beas}{\begin{eqnarray*}}
\newcommand{\eeas}{\end{eqnarray*} }
\newcommand{\bnn}{{\bf n}}
\newcommand{\bpro}{\begin{proposition} }
\newcommand{\epro}{\end{proposition} }
\newcommand{\Tmax}{T_{\rm max}}
\title[Quantized vortex dynamics in superconductivity]
{Quantized vortex dynamics and interaction patterns in superconductivity based on the reduced dynamical law}
\author[Zhiguo Xu, Weizhu Bao and Shaoyun Shi]{}
\subjclass{Primary: 34C60, 34D05;
Secondary: 34A33, 34D30, 65L07.}
 \keywords{Quantized vortex, reduced dynamical law, superconductivity, interaction pattern, non-autonomous first integral, winding number,
  orbital stability, finite time collision,  collision cluster.}
 \email{xuzg2014@jlu.edu.cn}
 \email{matbaowz@nus.edu.sg, http://www.math.nus.edu.sg/\~{ }bao}
 \email{shisy@jlu.edu.cn}
\begin{document}
\maketitle

\centerline{\scshape Zhiguo Xu}
\smallskip
{\footnotesize
 \centerline{School of Mathematics, Jilin University, Changchun 130012, P.
R. China}
}

\bigskip

\centerline{\scshape Weizhu Bao}
\smallskip
{\footnotesize
 \centerline{Department of Mathematics,  National University of Singapore, Singapore 119076}
}

\bigskip

\centerline{\scshape Shaoyun Shi}
\smallskip
{\footnotesize
 \centerline{School of Mathematics, Jilin University, Changchun 130012, P.
R. China}
}

\begin{abstract}
We study analytically and numerically stability and interaction patterns
of quantized vortex lattices governed by the reduced dynamical law -- a system of ordinary differential equations (ODEs) -- in superconductivity.
By deriving several non-autonomous first integrals of the ODEs, we obtain qualitatively dynamical properties of a cluster of quantized vortices, including global existence, finite time collision, equilibrium solution
and invariant solution manifolds. For a vortex lattice with 3 vortices, we establish orbital stability when they have the same winding number and find different collision patterns when they have different winding numbers. In addition, under several special initial setups,
we can obtain analytical solutions for the nonlinear ODEs.
\end{abstract}

\section{Introduction}
\setcounter{equation}{0}
\setcounter{figure}{0}
In this paper, we study analytically and numerically stability and interaction patterns of the following system of
ordinary differential equations (ODEs) describing the dynamics
of $N\ge2$ quantized vortices in superconductivity
based on the reduced dynamical law \cite{Neu2,Jerrard,Lin96,Zhang1,Zhang2}
\be\label{GLE}
\dot{\mathbf{x}}_j(t)=2m_j\displaystyle\sum_{k=1,k\neq
j}^{N}m_k\frac{\mathbf{x}_j(t)-\mathbf{x}_k(t)}{\left|
\mathbf{x}_j(t)-\mathbf{x}_k(t)\right|^2},\qquad 1\le j\le N, \quad t> 0,
\ee
with initial data
\be\label{GLEi}
\mathbf{x}_j(0)=\mathbf x_j^0=(x_j^0,y_j^0)^T\in {\mathbb R}^2,\quad
1\le j\le N.
\ee
Here $t$ is time, $\mathbf x_j(t)=(x_j(t), y_j(t))^T\in \mathbb R^2$
is the center of the $j$-th ($1\le j\le N$) quantized vortex at time $t$,
$m_j=+1$ or $-1$ is the winding number or index or circulation of
the $j$-th ($1\le j\le N$) quantized vortex. We always assume that the initial data satisfies $\bX^0:=(\bx_1^0,\ldots,\bx_N^0)\in
{\mathbb R}_*^{2\times N}:=
\{\bX=(\bx_1,\ldots,\bx_N)\in {\mathbb R}^{2\times N}\ |\
\bx_j\ne \bx_l\in {\mathbb R}^2\ \hbox{for}\ 1\le j<l\le N\}$
and denote its mass center as $\bar\bx^0:=\frac{1}{N}\sum_{j=1}^N\bx_j^0$.
Throughout this paper, we assume that $N\ge2$.

The ODEs \eqref{GLE} with \eqref{GLEi} was derived asymptotically
as a reduced dynamical law
for the dynamics of $N$ quantized vortices -- particle-like
or topological defects -- in the
Ginzburg-Landau equation \cite{Neu1,Jerrard,Lin96}
\be\label{pgl}
\partial_t \psi(\mathbf x,t)
=\nabla^2\psi(\mathbf x,
t)+\frac{1}{\varepsilon^2}(1-|\psi(\mathbf x,
t)|^2)\psi(\mathbf x,t), \quad \mathbf x\in\mathbb{R}^2,\quad t>0,
\ee
with initial condition
\be\label{pgli}
\psi(\mathbf x,0)=\psi_0(\mathbf x)=\Pi_{j=1}^N \phi_{m_j}(\bx-\bx_j^0), \qquad \mathbf
x\in\mathbb{R}^2,
\ee
for superconductivity when either $\varepsilon=1$ and $d_{\rm min}^0:=\min_{1\le j<l\le N}|\bx_j^0-\bx_l^0|\to \infty$ \cite{Neu2} or
for a given $\bX^0\in {\mathbb R}_*^{2\times N}$ and $\varepsilon\to 0^+$ \cite{Jerrard,Lin96}.
Here $\mathbf x=(x,y)^T\in\mathbb R^2$ is the Cartesian
coordinates in two dimensions (2D), $\psi:=\psi(\mathbf x, t)$ is a complex-valued
order parameter, $\varepsilon>0$ is a constant, and $\phi_{m}(\bx)=f(r)e^{im\theta}$ ($m=+1$ or $-1$) with $(r,\theta)$ the polar coordinates
in 2D and $f(r)$ satisfying \cite{Neu2,Jerrard,Lin96,Zhang1,Zhang2}
\beas
&&\frac{1}{r}\frac{d}{dr}\left(r\frac{df(r)}{dr}\right)-\frac{1}{r^2}f(r)
+\frac{1}{\varepsilon^2}(1-f^2(r))f(r)=0, \qquad 0<r<+\infty,\\
&&f(0)=0, \qquad \lim_{r\to +\infty} f(r)=1.
\eeas
Here $\phi_{m}(\bx)$ is a typical quantized vortex in 2D,
which is zero of the order parameter at the
vortex center located at the origin and has localized phase
singularity with integer $m$ topological charge usually
called also as winding number or index or circulation.
In fact, quantized
vortices have been widely observed in superconductor \cite{E,Lin96,Bao2},
liquid helium \cite{Newton}, Bose-Einstein condensates \cite{Pit,Bao0,Klein};
and they are key signatures of superconductivity and superfluidity.
The study of quantized vortices and their dynamics is one of the
most important and fundamental problems in superconductivity and
superfluidity \cite{Neu2,Bao1,Mironescu,Sandier,Bauman,Chapman,Colliander,Du,
Lange,Lin98,Bao3,Ovchinnikov1,Ovchinnikov2}.

Based on the reduced dynamical law, i.e. \eqref{GLE}, for
the quantized vortex dynamics in superconductivity, when two quantized
vortices have the same winding number (i.e. vortex pair), they undergo a
repulsive interaction; and respectively, when they have opposite winding numbers (i.e. vortex dipole or vortex-antivortex), they undergo an attractive interaction \cite{Neu2,Zhang1,Zhang2}. For $N\ge2$ and $\bX^0\in {\mathbb R}_*^{2\times N}$, it is straightforward to obtain local existence
of the ODEs \eqref{GLE} with \eqref{GLEi} by the standard theory of ODEs.
Specifically, when $N=2$, one can obtain explicitly the
analytical solution of \eqref{GLE} with \eqref{GLEi}: when $m_1=m_2$ (i.e. vortex pair),
the two vortices move outwards by repelling each other
along the line passing through their initial locations $\bx_1^0\ne \bx_2^0$ and they never collide at finite time; and when $m_1=-m_2$ (i.e. vortex dipole or vortex-antivortex), the two vortices move towards each other
along the line passing through their initial locations $\bx_1^0\ne \bx_2^0$ and they will collide at $\frac{1}{2}\left(\bx_1^0+\bx_2^0\right)$ in finite time \cite{Neu2,Zhang1,Zhang2}. For analytical solutions of the ODEs
\eqref{GLE} with several special initial setups in \eqref{GLEi},
we refer to \cite{Zhang1,Zhang2} and references therein.
In addition, define the {\sl mass center} of the $N$ vortices as
\be
\bar{\bx}(t):=\frac{1}{N}\sum_{j=1}^N \bx_j(t), \qquad t\ge0,
\ee
then it was proven that the mass center is conserved under the dynamics
of \eqref{GLE} with \eqref{GLEi} \cite{Zhang1,Zhang2}
\be \label{mccon1}
\bar{\bx}(t)\equiv \bar{\bx}(0)
=\bar{\bx}^0,
\qquad t\ge0.
\ee

Introduce
\be
W({\mathbf X})=-\sum_{1\le j\ne k\le N}m_jm_k\, \ln |{\mathbf x}_j-{\mathbf x}_k|=-\ln\, \displaystyle\prod_{1\le j\ne k\le N}|{\mathbf x}_j-{\mathbf x}_k|^{m_jm_k}, \qquad
{\mathbf X}\in {\mathbb R}_*^{2\times N},\ee
then \eqref{GLE} can be reformulated as
\be\label{GLEv}
\dot{\mathbf{X}}(t)=-\nabla_{\mathbf{X}}W({\mathbf X}),
\qquad  t>0,
\ee
which implies that
\be
W({\mathbf X}(t_2))\le W({\mathbf X}(t_1))\le W({\mathbf X}(0))=W({\mathbf X}^0), \qquad 0\le t_1\le t_2.
\ee
In addition, let ${\mathbf z}_j(t):=x_j(t)+iy_j(t)\in {\mathbb C}$ for $1\le j\le N$, then
\eqref{GLE} can be reformulated as
\be\label{GLEc}
\dot{\mathbf{z}}_j(t)=2m_j\displaystyle\sum_{k=1,k\neq
j}^{N}m_k\frac{\mathbf{z}_j(t)-\mathbf{z}_k(t)}{\left|
\mathbf{z}_j(t)-\mathbf{z}_k(t)\right|^2}=2m_j\displaystyle\sum_{k=1,k\neq
j}^{N}\frac{m_k}{\overline{\mathbf{z}}_j(t)-\overline{\mathbf{z}}_k(t)},\quad 1\le j\le N,
\quad t>0,
\ee
where $\bar{z}$ denotes the complex conjugate of $z\in {\mathbb C}$.

For rigorous mathematical justification of the derivation of the above
reduced dynamical law \eqref{GLE} with \eqref{GLEi} for superconductivity,
we refer to \cite{Jerrard,Lin96} and references therein,
and respectively, for numerical comparison of quantized vortex center dynamics under the Ginzburg-Landau equation \eqref{pgl} with
\eqref{pgli} and its corresponding
reduced dynamical law \eqref{GLE} with \eqref{GLEi}, we refer to \cite{Zhang1,Zhang2} and references therein.
Based on the mathematical and numerical results \cite{Jerrard,Lin96,Zhang1,Zhang2},
the dynamics of the $N$ quantized vortex centers under
the reduced dynamical law agrees qualitatively (and quantitatively when
they are well-separated) with that under the Ginzburg-Landau equation.
The main aim of this paper is to study analytically and numerically the dynamics and interaction
patterns of the reduced dynamical law \eqref{GLE} with \eqref{GLEi}, which
will generate important insights about quantized vortex dynamics and interaction patterns in superconductivity and is much simpler
than to solve the Ginzburg-Landau equation \eqref{pgl} with
\eqref{pgli}. We establish global existence of the ODEs
\eqref{GLE} when the $N$ quantized vortices have the same winding number
and possible finite time collision when they have opposite winding numbers.
For $N=3$, we prove orbital stability when they have the same winding number and find different collision patterns when they have different winding numbers. Analytical solutions of the ODEs
\eqref{GLE} are obtained under several initial setups with symmetry.

 The paper is organized as follows.
In section 2, we obtain some invariant solution manifolds
and several non-autonomous
first integrals of the ODEs \eqref{GLE} and establish its global existence
when the $N$ quantized vortices have the same winding number
and possible finite time collision when they have opposite winding numbers.
In section 3, we prove orbital stability when they have the same winding number and find different collision patterns when they have different winding numbers for the dynamics of $N=3$ vortices.
Analytical solutions of the ODEs
\eqref{GLE} are presented under several initial setups with symmetry
in  section 4.  Finally, some conclusions are drawn in section 5.

\section{Dynamical properties of a cluster with $N$ quantized vortices}
\setcounter{equation}{0}
\setcounter{figure}{0}
In this section, we establish dynamical properties of the system of ODEs
(\ref{GLE}) with the initial data (\ref{GLEi}) for describing
the dynamics -- reduced dynamical law -- of a cluster with $N$ quantized vortices in superconductivity.

For any two vortices $\bx_j(t)$ and $\bx_l(t)$ ($1\le j<l\le N$),
if there exists a finite time $0<T_c<+\infty$ such that $d_{jl}(t):=|\bx_j(t)-\bx_l(t)|>0$ for $0\le t< T_c$ and
$d_{jl}(T_c)=0$, then we say that they will be {\sl finite time collision}
or annihilation (cf. Fig. \ref{collion1}a);
otherwise, i.e. $d_{jl}(t)>0$ for $t\ge0$, then we say that they will
not collide. When $N\ge 2$ and let $I\subseteq \{1,2,\ldots,N\}$
be a set with at least $2$ elements, if there exists a finite time $0<T_c<+\infty$ such that $\min_{1\le j< l\le N} d_{jl}(t)>0$ for $0\le t< T_c$, $\lim_{t\to T_c^-} \bx_j(t)=\bx^0\in {\mathbb R}^2$ for $j,k\in I$ with $\bx^0$ a fixed point and $\min_{j\in I} d_{jl}(t)>0$ for $0\le t\le  T_c$ and $l\in J:=\{m\ |\ 1\le m \le N, m\notin I\}$,
then we say that all vortices in the set $I$ will form a (finite time) {\sl collision cluster} among the $N$ vortices (cf. Fig. \ref{collion1}b). Define
\[T_{\rm max}=\sup\left\{t\ge0\ |~\mathbf x_j(t)\neq \mathbf
x_l(t),\ \hbox{for all} \ 1\le j\neq l\le N \right\},\]
it is easy to see that $0<T_{\rm max}\le +\infty$ by noting \eqref{GLEi}.
If $T_{\rm max}<+\infty$, a finite time collision happens among
at least two vortices in the $N$ vortices (or the ODEs \eqref{GLE} with \eqref{GLEi} will blow-up at finite time); otherwise, i.e. $T_{\rm max}=+\infty$, there is no collision among all
the $N$ quantized vortices (or the ODEs \eqref{GLE} with \eqref{GLEi}
is global well-posed in time).

\begin{figure}[htbp]
\centerline{\psfig{figure=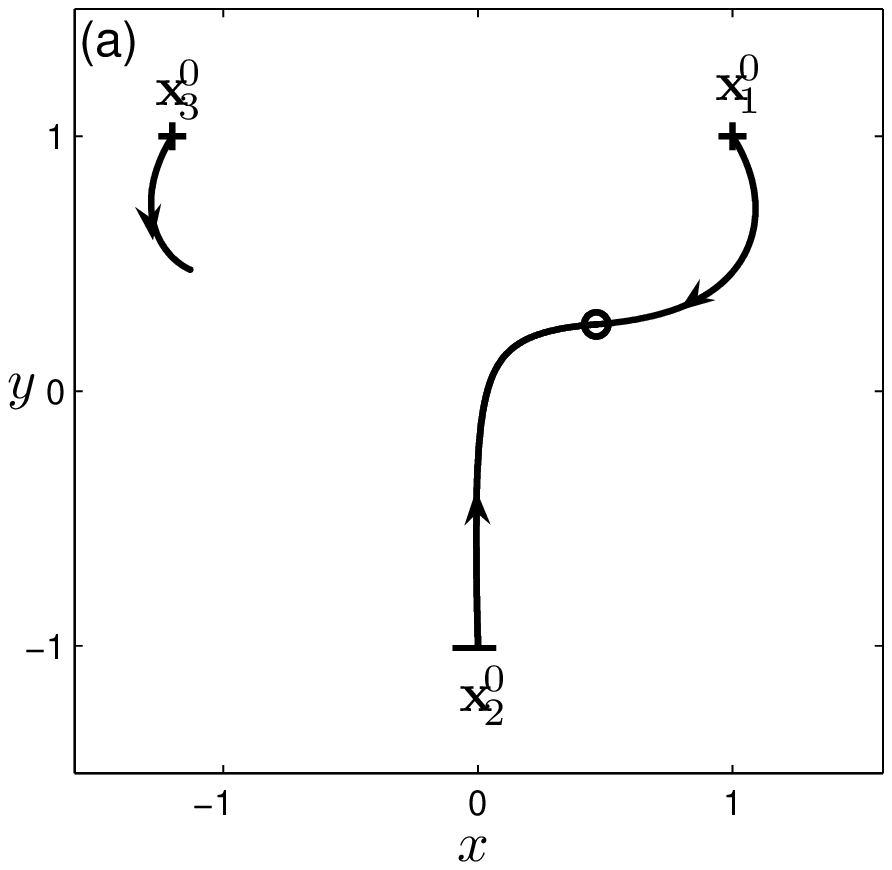,height=6.0cm,width=6.0cm,angle=0}
\psfig{figure=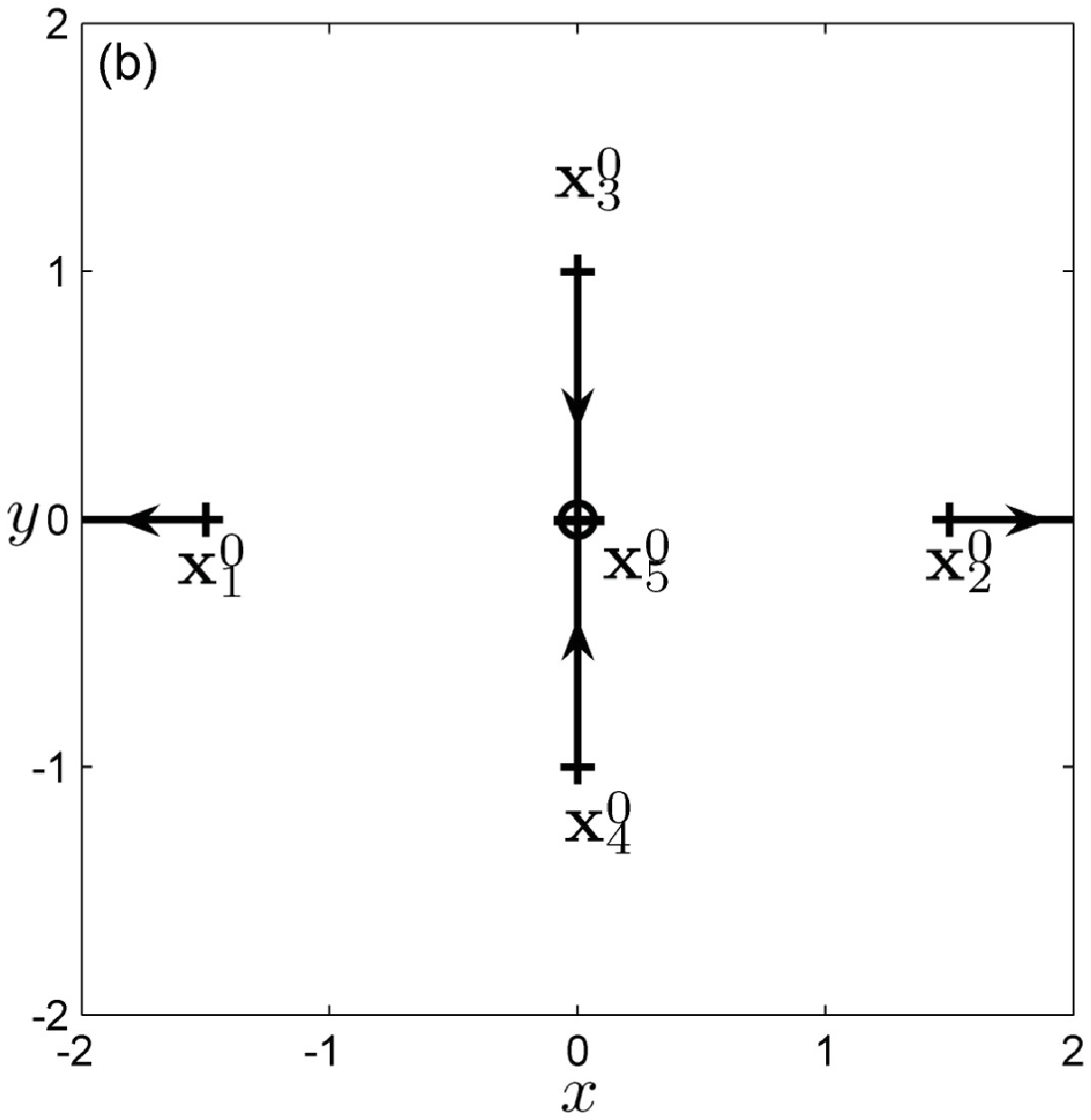,height=5.4cm,width=5.5cm,angle=0}}
\caption{Illustrations of a finite time collision of a vortex dipole in
a vortex cluster with 3 vortices (a) and
a (finite time) collision cluster with 3 vortices in
a vortex cluster with 5 vortices (b). Here and in the following figures,
`+' and `$-$' denote the initial vortex centers with winding numbers
$m=+1$ and $m=-1$, respectively; and `o' denotes the finite time collision position.}
\label{collion1}
\end{figure}

\subsection{Invariant solution manifolds}
Let $\alpha>0$ be a positive constant,
$0\le \theta_0<2\pi$ be a constant,
$\bx^0\in{\mathbb R}^2$ be a given point and $Q(\theta)$ be the rotational matrix defined as
\be
Q(\theta)=\left(\begin{array}{ll}
\cos\theta &-\sin\theta\\
\sin\theta &\ \cos\theta\\
\end{array}\right), \qquad 0\le \theta <2\pi.\nn
\ee

Then it is easy to see that the ODEs \eqref{GLE} with \eqref{GLEi}
is translational and rotational invariant with the proof omitted here for brevity.

\begin{lemma}\label{invar12}
Let $\bX(t)=(\bx_1(t),\bx_2(t),\ldots, \bx_N(t))\in {\mathbb R}_*^{2\times N}$
be the solution of the ODEs \eqref{GLE} with \eqref{GLEi}, then we have

(i) If $\bx_j^0\to \bx_j^0+\bx^0$ for $1\le j\le N$ in \eqref{GLEi},
then $\bx_j(t)\to \bx_j(t)+\bx^0$ for $1\le j\le N$.

(ii) If $\bx_j^0\to \alpha\bx_j^0$ for $1\le j\le N$ in \eqref{GLEi},
then $\bx_j(t)\to \alpha\bx_j(t/\alpha^2)$ for $1\le j\le N$.

(iii) If $\bx_j^0\to Q(\theta)\bx_j^0$ for $1\le j\le N$ in \eqref{GLEi},
then $\bx_j(t)\to Q(\theta)\bx_j(t)$ for $1\le j\le N$.
\end{lemma}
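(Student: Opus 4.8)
The plan is to verify each of the three invariance properties by direct substitution into the system \eqref{GLE}, using the fact that solutions of an autonomous ODE system are uniquely determined by their initial data (local existence and uniqueness, guaranteed by the smoothness of the right-hand side on ${\mathbb R}_*^{2\times N}$). In each case I would define a candidate transformed curve, compute its time derivative, and check that it satisfies the same ODE system \eqref{GLE} with the correspondingly transformed initial data; uniqueness then forces it to coincide with the solution of the transformed initial value problem.

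For part (i), set $\tilde\bx_j(t):=\bx_j(t)+\bx^0$. Then $\dot{\tilde\bx}_j(t)=\dot\bx_j(t)$, while the right-hand side of \eqref{GLE} depends only on the differences $\bx_j-\bx_k=\tilde\bx_j-\tilde\bx_k$, so $\tilde\bX(t)$ solves \eqref{GLE} with initial data $\tilde\bx_j(0)=\bx_j^0+\bx^0$; uniqueness gives the claim. For part (iii), set $\tilde\bx_j(t):=Q(\theta)\bx_j(t)$. Since $Q(\theta)$ is linear, $\dot{\tilde\bx}_j(t)=Q(\theta)\dot\bx_j(t)$, and applying $Q(\theta)$ to the summand on the right of \eqref{GLE} yields $2m_j\sum_{k\neq j}m_k\,\frac{Q(\theta)(\bx_j-\bx_k)}{|\bx_j-\bx_k|^2}$; because $Q(\theta)$ is orthogonal, $|Q(\theta)(\bx_j-\bx_k)|=|\bx_j-\bx_k|=|\tilde\bx_j-\tilde\bx_k|$, so the right-hand side is exactly the field evaluated at $\tilde\bX$. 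Hence $\tilde\bX(t)$ solves \eqref{GLE} with data $Q(\theta)\bx_j^0$, and uniqueness finishes it.

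For part (ii), the scaling case, set $\tilde\bx_j(t):=\alpha\,\bx_j(t/\alpha^2)$. By the chain rule $\dot{\tilde\bx}_j(t)=\alpha\cdot\frac{1}{\alpha^2}\dot\bx_j(t/\alpha^2)=\frac{1}{\alpha}\dot\bx_j(t/\alpha^2)$. On the other hand, the right-hand side of \eqref{GLE} is homogeneous of degree $-1$ in the position variables: replacing $\bx_j$ by $\alpha\bx_j$ multiplies numerator by $\alpha$ and denominator by $\alpha^2$, giving an overall factor $\frac{1}{\alpha}$. Thus evaluating the field at $\tilde\bX(t)=\alpha\bX(t/\alpha^2)$ produces $\frac{1}{\alpha}$ times the field at $\bX(t/\alpha^2)$, which equals $\frac{1}{\alpha}\dot\bx_j(t/\alpha^2)=\dot{\tilde\bx}_j(t)$. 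So $\tilde\bX$ solves \eqref{GLE}, and $\tilde\bx_j(0)=\alpha\bx_j^0$, so uniqueness concludes.

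There is essentially no hard step here; the only points requiring any care are bookkeeping ones: making sure the time-rescaling factors in part (ii) cancel correctly (the $1/\alpha^2$ from the chain rule against the degree $-1$ homogeneity), and noting that each transformed curve stays in ${\mathbb R}_*^{2\times N}$ so that the ODE and its uniqueness theory apply on the relevant time interval. I would also remark that these invariances can equivalently be read off the gradient-flow formulation \eqref{GLEv}, since $W$ is invariant under translations and rotations of all vortices and satisfies $W(\alpha\bX)=W(\bX)-\big(\sum_{j\ne k}m_jm_k\big)\ln\alpha$, whose gradient in $\bX$ differs from $\nabla_\bX W$ only by the scaling factor; this gives a one-line conceptual proof of all three statements.
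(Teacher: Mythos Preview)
Your proof is correct. The paper itself omits the proof of this lemma entirely (stating only that the invariances are ``easy to see'' with ``the proof omitted here for brevity''), and your direct-verification-plus-uniqueness argument is precisely the routine computation one would supply; the bookkeeping in part~(ii) and the orthogonality observation in part~(iii) are handled cleanly.
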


Denote
\[S_{{\bf e}}(\bx^0):=\left\{\bX=(\bx_1,\ldots,\bx_N)\in {\mathbb R}_*^{2\times N} \ |\ |(\bx_j-\bx^0)\cdot{\bf e}|=|\bx_j-\bx^0|\times |{\bf e}|, 1\le j\le N\right\},\]
where ${\bf e}\in {\mathbb R}^2$ is a given unit vector. In fact, $S_{{\bf e}}(\bx^0)$ is a line in 2D passing the point $\bx^0$ and parallel to the unit vector ${\bf e}$. For $\bX=(\bx_1,\ldots,\bx_N)\in {\mathbb R}_*^{2\times N}$, if there exist $\bx^0\in {\mathbb R}^2$ and a unit vector ${\bf e}\in {\mathbb R}^2$ such that $\bX\in S_{{\bf e}}(\bx^0)$, then we say that
$\bX$ is collinear.

\begin{lemma}\label{lmcolin}
If the initial data $\bX^0\in {\mathbb R}_*^{2\times N}$ in \eqref{GLEi} is collinear, i.e.
there exist  $\bx^0\in{\mathbb R}^2$ and a unit vector  ${\bf e}\in {\mathbb R}^2$ such that $\bX^0\in S_{\bf e}(\bx^0)$, then the solution $\bX(t)$ of
\eqref{GLE}-\eqref{GLEi} is collinear, i.e.
$\bX(t)\in S_{\bf e}(\bx^0)$ for $0\le t<T_{\rm max}$.
\end{lemma}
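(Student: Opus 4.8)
The plan is to reduce to the case where the line $S_{\mathbf e}(\mathbf x^0)$ is a coordinate axis, and then show that the corresponding coordinate of every vortex, if initially zero, stays zero for all $t\in[0,T_{\rm max})$ by a uniqueness argument. Concretely, by Lemma \ref{invar12}(i) I would first translate so that $\mathbf x^0=\mathbf 0$, and by Lemma \ref{invar12}(iii) I would rotate so that $\mathbf e=(1,0)^T$; since both operations map solutions to solutions and preserve the property of lying on the transformed line, it suffices to treat the case $S_{\mathbf e}(\mathbf x^0)=\{(x,0)^T : x\in\mathbb R\}$, i.e. $y_j^0=0$ for all $1\le j\le N$. (Note that collinearity of $\bX^0\in\mathbb R_*^{2\times N}$ forces the $x_j^0$ to be pairwise distinct, so the configuration is genuinely on the $x$-axis with no coincidences.)

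Next I would write out the second component of \eqref{GLE}: for $1\le j\le N$,
\be
\dot y_j(t)=2m_j\sum_{k\ne j}m_k\,\frac{y_j(t)-y_k(t)}{|\mathbf x_j(t)-\mathbf x_k(t)|^2}.\nn
\ee
This is a linear (non-autonomous) system in $\mathbf y(t):=(y_1(t),\ldots,y_N(t))^T$ with coefficients that are continuous on $[0,T_{\rm max})$ because $|\mathbf x_j(t)-\mathbf x_k(t)|>0$ there by definition of $T_{\rm max}$. Since $\mathbf y(0)=\mathbf 0$ and $\mathbf y\equiv\mathbf 0$ is clearly a solution of this linear homogeneous system, uniqueness for linear ODEs with continuous coefficients gives $\mathbf y(t)\equiv\mathbf 0$ on $[0,T_{\rm max})$. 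Equivalently, and perhaps cleaner to state: $\bX(t)$ with all $y_j\equiv 0$ and $x_j$ solving the reduced one-dimensional system $\dot x_j=2m_j\sum_{k\ne j}m_k/(x_j-x_k)$ is a solution of the full ODEs \eqref{GLE} with the given initial data, which remains in $\mathbb R_*^{2\times N}$ as long as the $x_j$ stay distinct; by uniqueness of solutions to \eqref{GLE}-\eqref{GLEi} (standard ODE theory, as noted in the Introduction) this is \emph{the} solution, hence $\bX(t)\in S_{\mathbf e}(\mathbf 0)$ for $0\le t<T_{\rm max}$. Undoing the rotation and translation recovers the statement for the original line $S_{\mathbf e}(\mathbf x^0)$.

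The only mild subtlety — and the one step worth a sentence of care rather than a routine wave — is bookkeeping the interval of existence: one must check that the maximal existence time of the reduced axis dynamics coincides with $T_{\rm max}$ of the full system, which is immediate since the non-degeneracy condition $\mathbf x_j\ne\mathbf x_l$ reduces, on the invariant set $\{y_j=0\}$, exactly to $x_j\ne x_l$. No estimate is actually hard here; the content is entirely the invariance of $\{\,\mathbf y=\mathbf 0\,\}$ together with the reductions furnished by Lemma \ref{invar12}.
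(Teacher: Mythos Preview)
Your proof is correct and follows essentially the same strategy as the paper: construct a solution that stays on the line by solving the reduced one-dimensional system, then invoke uniqueness for the full system. The only cosmetic difference is that the paper works directly with the parametrization $\bx_j(t)=\bx^0+a_j(t)\mathbf e$ along the given line (without first rotating and translating to the $x$-axis via Lemma~\ref{invar12}), plugs this ansatz into \eqref{GLE} to obtain the scalar ODEs for $a_j$, and concludes from their local well-posedness; your additional observation that the $y$-components satisfy a linear homogeneous non-autonomous system with zero data is a pleasant alternative way to phrase the same uniqueness step.
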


\begin{proof} From $\bX^0\in S_{\bf e}(\bx^0)$, there exist $a_j^0\in {\mathbb R}$ ($1\le j\le N$) satisfying $a_j^0\ne a_l^0$ for $1\le j<l\le N$ such that
\be \label{bxj089}
\bx_j^0=\bx^0+a_j^0 {\bf e}, \qquad 1\le j\le N.
\ee
Noting the symmetric structure in \eqref{GLE} and \eqref{bxj089}, we can assume
\be\label{bxjt678}
\bx_j(t)=\bx^0+a_j(t) {\bf e}, \qquad 1\le j\le N, \qquad t\ge0.
\ee
Plugging \eqref{bxjt678} into \eqref{GLE}, we have
\be\label{GLEaj}
\dot{a}_j(t)=2m_j\sum_{k=1,k\neq
j}^{N}m_k\frac{a_j(t)-a_k(t)}{\left|
a_j(t)-a_k(t)\right|^2},\qquad 1\le j\le N, \quad t> 0,
\ee
with the initial data by noting \eqref{bxj089}
\be\label{aj0345}
a_j(0)=a_j^0, \qquad 1\le j\le N.
\ee
The ODEs \eqref{GLEaj} with \eqref{aj0345} is locally well-posed.
Thus $\bX(t)\in S_{\bf e}(\bx^0)$ for $0\le t<T_{\rm max}$.
\end{proof}

Let ${\bf e}\in {\mathbb R}^2$
be a  unit vector, denote $\theta_N^j:=\frac{2(j-1)\pi}{N}$
and $\bx_j^{(0)}=Q(\theta_N^j+\theta_0){\bf e}$
for $1\le j\le N$ and define
\beas
&&S_{{\bf e}}^N(\bx^0,\theta_0):=\left\{\bX^0_r=(\bx^0+r\bx_1^{(0)},\ldots,\bx^0+
r\bx_N^{(0)})\in {\mathbb R}_*^{2\times N} \ |\ r>0\right\},\\
&&S_{{\bf e}}^N(\bx^0):=\displaystyle\bigcup_{0\le \theta_0<2\pi}S_{{\bf e}}^N(\bx^0,\theta_0).
\eeas

\begin{lemma}\label{plogon1}
Assume the $N$ vortices have the same winding number,
i.e. $m_1=\ldots=m_N=\pm1$. If there exists a unit vector ${\bf e}\in {\mathbb R}^2$, $\bx^0\in{\mathbb R}^2$ and $0\le \theta_0<2\pi$ such that
the initial data  $\bX^0\in
S_{{\bf e}}^N(\bx^0,\theta_0)$ in \eqref{GLEi},  then the solution $\bX(t)$ of \eqref{GLE} satisfies
$\bX(t)\in S_{\bf e}^N(\bx^0,\theta_0)$ for $t\ge0$.
\end{lemma}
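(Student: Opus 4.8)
The plan is to produce the solution explicitly through a self\nobreakdash-similar ansatz and then appeal to uniqueness. By the translational invariance in Lemma \ref{invar12}(i) we may assume $\bx^0=\mathbf 0$. Write the hypothesis $\bX^0\in S_{\bf e}^N(\mathbf 0,\theta_0)$ as $\bx_j^0=r_0\,\bx_j^{(0)}$ with $r_0>0$ and $\bx_j^{(0)}=Q(\theta_N^j+\theta_0){\bf e}$ for $1\le j\le N$. I would look for a solution of \eqref{GLE}--\eqref{GLEi} of the form $\bx_j(t)=r(t)\,\bx_j^{(0)}$ with $r(0)=r_0$: once such an $r(\cdot)>0$ is exhibited on $[0,\infty)$, the inclusion $\bX(t)\in S_{\bf e}^N(\mathbf 0,\theta_0)$ for all $t\ge 0$ is immediate from the definition of $S_{\bf e}^N$.

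The heart of the matter is to verify that this ansatz is \emph{consistent}, i.e.\ that inserting $\bx_j(t)=r(t)\bx_j^{(0)}$ into the right-hand side of \eqref{GLE} yields, for each $j$, a vector parallel to $\bx_j^{(0)}$ with a proportionality factor independent of $j$. I would carry this out in the complex formulation \eqref{GLEc}, identifying $\bx_j^{(0)}$ with the unit complex number $\zeta_j={\bf e}\,e^{i\theta_0}\,\omega^{\,j-1}$, where $\omega=e^{2\pi i/N}$ and ${\bf e}$ now also denotes the unit complex number attached to the vector ${\bf e}$. Since $m_1=\cdots=m_N=\pm1$ makes $2m_jm_k=2$ and $\overline{\zeta_l}=1/\zeta_l$, the $j$-th equation of \eqref{GLEc} under the ansatz becomes
\[
\dot r(t)\,\zeta_j=\frac{2}{r(t)}\sum_{k\ne j}\frac{1}{\overline{\zeta_j}-\overline{\zeta_k}}=\frac{2\zeta_j}{r(t)}\sum_{k\ne j}\frac{\zeta_k}{\zeta_k-\zeta_j}.
\]
Writing $\zeta_k/(\zeta_k-\zeta_j)=1+\zeta_j/(\zeta_k-\zeta_j)$ and using the classical root-of-unity identity $\sum_{k\ne j}(\zeta_k-\zeta_j)^{-1}=-\tfrac{N-1}{2\zeta_j}$ (which one reads off from $P''(\omega^{j-1})/(2P'(\omega^{j-1}))$ for $P(z)=z^N-1$), the sum evaluates to $(N-1)-\tfrac{N-1}{2}=\tfrac{N-1}{2}$, so the whole system collapses to the single scalar ODE $\dot r(t)=(N-1)/r(t)$, $r(0)=r_0>0$. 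More softly, the velocity at $\bx_j^{(0)}$ must be radial with a $j$-independent factor by the rotational invariance of Lemma \ref{invar12}(iii) together with the equally elementary reflection invariance of \eqref{GLE}: the regular $N$-gon is fixed both by the reflection across the line through $\mathbf 0$ and $\bx_j^{(0)}$, which fixes vortex $j$, and by $Q(2\pi/N)$, which permutes the vortices cyclically.

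The reduced problem has the unique global solution $r(t)=\sqrt{r_0^2+2(N-1)t}>0$ for all $t\ge0$. Hence $\bX(t)=(r(t)\bx_1^{(0)},\ldots,r(t)\bx_N^{(0)})$ solves \eqref{GLE}--\eqref{GLEi} and stays in $S_{\bf e}^N(\mathbf 0,\theta_0)\subset{\mathbb R}_*^{2\times N}$ for every $t\ge0$; in particular $|\bx_j(t)-\bx_k(t)|=r(t)\,|\bx_j^{(0)}-\bx_k^{(0)}|>0$ for $j\ne k$, so $T_{\rm max}=+\infty$ on this manifold. Since the right-hand side of \eqref{GLE} is smooth on ${\mathbb R}_*^{2\times N}$, local existence and uniqueness identify this with the solution of \eqref{GLE}--\eqref{GLEi}, and undoing the translation gives $\bX(t)\in S_{\bf e}^N(\bx^0,\theta_0)$ for $t\ge0$. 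The only step that requires genuine computation is the consistency check in the second paragraph; everything else is bookkeeping.
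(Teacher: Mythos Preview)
Your proof is correct and follows the same strategy as the paper: assume the self-similar ansatz $\bx_j(t)=\bx^0+r(t)\,Q(\theta_N^j+\theta_0){\bf e}$, reduce \eqref{GLE} to the scalar ODE $\dot r=(N-1)/r$, and solve it globally as $r(t)=\sqrt{r_0^2+2(N-1)t}$. The paper simply states this reduction with a citation, whereas you carry out the consistency check explicitly in the complex formulation \eqref{GLEc} via the roots-of-unity identity $\sum_{k\ne j}(\zeta_k-\zeta_j)^{-1}=-\tfrac{N-1}{2\zeta_j}$, and you make the appeal to ODE uniqueness explicit; these are welcome details but do not change the architecture of the argument.
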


\begin{proof} Since $\bX^0\in
S_{{\bf e}}^N(\bx^0,\theta_0)$, there exists a $r_0>0$ such that
\be\label{bxj0654}
\bx_j^0=\bx^0+r_0Q(\theta_N^j+\theta_0){\bf e}, \qquad  1\le j\le N.
\ee
Noting the symmetric structure in \eqref{GLE} and \eqref{bxj0654}, we can assume
\be\label{bxjt6786}
\bx_j(t)=\bx^0+r(t)Q(\theta_N^j+\theta_0){\bf e}, \qquad 1\le j\le N, \qquad t\ge0.
\ee
Plugging \eqref{bxjt6786} into \eqref{GLE}, noting $m_1=\ldots=m_N$ and \eqref{bxj0654}, we have \cite{Zhang1,Zhang2}
\begin{equation*}
\dot{r}(t)=\frac{N-1}{r(t)}, \quad t> 0, \qquad r(0)=r_0,
\end{equation*}
which implies $r(t)=\sqrt{r_0^2+2(N-1)t}$ for $t\ge0$.
Thus $\bX(t)\in S_{\bf e}^N(\bx^0,\theta_0)$ for $t\ge0$.
\end{proof}

From the above two lemmas, for any $\theta_0\in{\mathbb R}$, $\bx^0\in {\mathbb R}^2$ and a unit vector ${\bf e}\in {\mathbb R}^2$, $S_{{\bf e}}(\bx^0)$ is an invariant solution manifold
of the ODEs \eqref{GLE} with \eqref{GLEi}. In addition,
when $m_1=\ldots=m_N$, then  $S_{{\bf e}}^N(\bx^0,\theta_0)$
is also an invariant solution manifold
of the ODEs \eqref{GLE} with \eqref{GLEi}.
Specifically, when  $\bX^0\in S_{{\bf e}}^N({\bf 0},\theta_0)$ and $m_1=\ldots=m_N$, then the ODEs \eqref{GLE} with \eqref{GLEi}
admits the self-similar solution
$\bX(t)=\sqrt{r_0^2+2(N-1)t}\,\bX^0$ with $r_0=|\bx_1^0|=\ldots=|\bx_N^0|$
for $t\ge0$. For more self-similar solutions of the ODEs \eqref{GLE} with special initial setups,
we refer to \cite{Zhang1,Zhang2} and references therein.

\subsection{Non-autonomous first integrals} Let $N^{+}$ and $N^{-}$ be the number of vortices with winding
number $m=1$ and $m=-1$, respectively, then we have
\[0\le N^+\le N,\qquad 0\le N^-\le N, \qquad   N^++N^-=N.\]
In addition, it is easy to get
\bea\label{mjml}
M_0&=&\sum_{1\leq j<l\leq
N}m_jm_l=\frac{1}{2}\sum_{j,l=1,j\ne l}^N m_jm_l
=\frac{N^{+}(N^{+}-1)}{2}+\frac{N^{-}(N^{-}-1)}{2}-N^{+}N^{-}\nn\\
&=&\frac{(N^{+}-N^{-})^2-N}{2}.
\eea

Define 
\bea\label{H12}
H_1(\mathbf{X},t)&=&-4NM_0t+\sum_{1\leq j<l\leq
N}|\mathbf{x}_j-\mathbf{x}_l|^2,\quad
H_2(\mathbf{X},t)=-4M_0t+\sum_{j=1}^{ N}|\mathbf{x}_j|^2, \\
\label{H13}
H_3(\mathbf{X},t)&=&-4(N-2)M_0t+\sum_{1\leq j<l\leq
N}|\mathbf{x}_j+\mathbf{x}_l|^2,\
\bX:=(\bx_1,\bx_2,\ldots, \bx_N)\in {\mathbb R}^{2\times N}, \ t\ge0,\qquad
\eea
then we have

\begin{lemma}\label{lem1}
Let $\bX(t)=(\bx_1(t),\bx_2(t),\ldots, \bx_N(t))\in {\mathbb R}_*^{2\times N}$
be the solution of the ODEs \eqref{GLE} with \eqref{GLEi},  then
$H_1(\bX,t)$, $H_2(\bX,t)$ and $H_3(\bX,t)$
are  non-autonomous first integrals of \eqref{GLE}, i.e.
\bea \label{fi11}
H_1(\mathbf{X}(t), t)&\equiv&H_1^0:=\sum_{1\leq j<l\leq
N}|\mathbf{x}_j^0-\mathbf{x}_l^0|^2,\qquad H_2(\mathbf{X}(t), t)\equiv  H_2^0:=\sum_{j=1}^{N}|\mathbf{x}_j^0|^2, \\
\label{fi12}
H_3(\mathbf{X}(t), t)&\equiv&H_3^0:=\sum_{1\leq j<l\leq
N}|\mathbf{x}_j^0+\mathbf{x}_l^0|^2, \qquad t\ge0.
\eea
Specifically, when $M_0=0$, $H_1(\bX):=H_1(\bX,t)=\sum_{1\le j<l\le N}|\bx_j-\bx_l|^2$ and $H_2(\bX):=H_2(\bX,t)=\sum_{j=1}^N|\bx_j|^2$
are two autonomous first integrals of \eqref{GLE}; and
when either $N=2$ or $M_0=0$, $H_3(\bX):=H_3(\bX,t)=\sum_{1\le j<l\le N}|\bx_j+\bx_l|^2$ is an autonomous first integral of \eqref{GLE}.
\end{lemma}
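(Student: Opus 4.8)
The plan is to show that $t\mapsto H_i(\mathbf{X}(t),t)$ has vanishing derivative along any solution of \eqref{GLE} on its maximal interval $[0,\Tmax)$ (where the right-hand side of \eqref{GLE} is defined because $\mathbf{X}(t)\in{\mathbb R}_*^{2\times N}$), which together with $H_i(\mathbf{X}(0),0)=H_i^0$ gives \eqref{fi11}--\eqref{fi12}. Everything reduces to one computation, that of $\frac{d}{dt}\sum_{j=1}^N|\mathbf{x}_j(t)|^2$. Using \eqref{GLE},
\[
\frac{d}{dt}\sum_{j=1}^N|\mathbf{x}_j|^2=2\sum_{j=1}^N\mathbf{x}_j\cdot\dot{\mathbf{x}}_j
=4\sum_{1\le j\ne k\le N}m_jm_k\,\frac{\mathbf{x}_j\cdot(\mathbf{x}_j-\mathbf{x}_k)}{|\mathbf{x}_j-\mathbf{x}_k|^2}.
\]
The only step that is not purely formal is the symmetrization of this double sum: grouping the ordered pairs $(j,k)$ and $(k,j)$ and using $m_jm_k=m_km_j$, the combined numerator is $\mathbf{x}_j\cdot(\mathbf{x}_j-\mathbf{x}_k)+\mathbf{x}_k\cdot(\mathbf{x}_k-\mathbf{x}_j)=|\mathbf{x}_j-\mathbf{x}_k|^2$, so each unordered pair $\{j,l\}$ contributes exactly $m_jm_l$ and, by \eqref{mjml}, $\frac{d}{dt}\sum_{j=1}^N|\mathbf{x}_j|^2=4\sum_{1\le j<l\le N}m_jm_l=4M_0$. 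Hence $\frac{d}{dt}H_2(\mathbf{X}(t),t)=-4M_0+4M_0=0$, giving $H_2(\mathbf{X}(t),t)\equiv H_2^0$.

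Next I would derive the other two from elementary algebraic identities rather than repeat the differentiation. Since
\[
\sum_{1\le j<l\le N}|\mathbf{x}_j-\mathbf{x}_l|^2=N\sum_{j=1}^N|\mathbf{x}_j|^2-\Bigl|\sum_{j=1}^N\mathbf{x}_j\Bigr|^2=N\sum_{j=1}^N|\mathbf{x}_j|^2-N^2|\bar{\mathbf{x}}|^2
\]
and the mass center $\bar{\mathbf{x}}(t)$ is conserved by \eqref{mccon1}, differentiating and using the boxed computation gives $\frac{d}{dt}\sum_{j<l}|\mathbf{x}_j-\mathbf{x}_l|^2=4NM_0$, so $\frac{d}{dt}H_1(\mathbf{X}(t),t)=0$ and $H_1(\mathbf{X}(t),t)\equiv H_1^0$. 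Similarly, from $|\mathbf{x}_j+\mathbf{x}_l|^2=2|\mathbf{x}_j|^2+2|\mathbf{x}_l|^2-|\mathbf{x}_j-\mathbf{x}_l|^2$ one gets $\sum_{1\le j<l\le N}|\mathbf{x}_j+\mathbf{x}_l|^2=2(N-1)\sum_{j=1}^N|\mathbf{x}_j|^2-\sum_{1\le j<l\le N}|\mathbf{x}_j-\mathbf{x}_l|^2$, hence $\frac{d}{dt}\sum_{j<l}|\mathbf{x}_j+\mathbf{x}_l|^2=8(N-1)M_0-4NM_0=4(N-2)M_0$ and $\frac{d}{dt}H_3(\mathbf{X}(t),t)=0$, i.e. $H_3(\mathbf{X}(t),t)\equiv H_3^0$. (Equivalently one may note $H_3=2(N-1)H_2-H_1$ and, on a solution, $H_1=NH_2-N^2|\bar{\mathbf{x}}^0|^2$, so the conservation of $H_1$ and $H_3$ follows from that of $H_2$ and \eqref{mccon1}.)

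Finally the closing assertions read off the shape of $H_1,H_2,H_3$: if $M_0=0$ the explicit $t$-terms in $H_1$ and $H_2$ drop out, so $H_1(\mathbf{X})=\sum_{j<l}|\mathbf{x}_j-\mathbf{x}_l|^2$ and $H_2(\mathbf{X})=\sum_{j=1}^N|\mathbf{x}_j|^2$ are autonomous first integrals; and the $t$-coefficient $4(N-2)M_0$ of $H_3$ vanishes whenever $M_0=0$ or $N=2$, so in those cases $H_3(\mathbf{X})=\sum_{j<l}|\mathbf{x}_j+\mathbf{x}_l|^2$ is autonomous. I do not expect a genuine obstacle: the single substantive point is the symmetrization identity above, and the only bookkeeping to respect is that the whole argument is carried out on $[0,\Tmax)$ so that \eqref{GLE} is well defined throughout.
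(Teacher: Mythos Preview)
Your proof is correct and takes a genuinely different, more economical route than the paper. The paper computes $\frac{d}{dt}H_1$ and $\frac{d}{dt}H_3$ directly: each of these requires expanding $\sum_{j<l}(\mathbf{x}_j\mp\mathbf{x}_l)\cdot(\dot{\mathbf{x}}_j\mp\dot{\mathbf{x}}_l)$ via \eqref{GLE}, producing triple sums over $j,l,k$ that are reduced by separating out the diagonal terms $k=l$ (or $k=j$) and then reindexing the residual triple sums so they telescope (equations \eqref{G12}--\eqref{G11} and \eqref{G13}). Only afterwards is $H_2$ obtained from the algebraic relation $H_2=\frac{1}{2(N-1)}(H_1+H_3)$. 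You instead do the single easy computation for $H_2$ --- one symmetrization over $(j,k)\leftrightarrow(k,j)$ --- and then recover $H_1$ and $H_3$ from the polarization identities $\sum_{j<l}|\mathbf{x}_j-\mathbf{x}_l|^2=N\sum_j|\mathbf{x}_j|^2-N^2|\bar{\mathbf{x}}|^2$ and $\sum_{j<l}|\mathbf{x}_j+\mathbf{x}_l|^2=2(N-1)\sum_j|\mathbf{x}_j|^2-\sum_{j<l}|\mathbf{x}_j-\mathbf{x}_l|^2$, together with the conservation of the mass center \eqref{mccon1}. The payoff of your approach is brevity and transparency: no triple sums appear at all. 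The trade-off is that your argument for $H_1$ imports \eqref{mccon1} as an ingredient, whereas the paper's direct computation of $H_1$ is self-contained (it does not use mass-center conservation). Since \eqref{mccon1} is already established in the paper, this is not a gap, just a different logical ordering.
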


\begin{proof} Differentiating the left equation in \eqref{H12} (with $\bX=\bX(t)$) with respect to $t$,  we have
\bea \label{H135}
\frac{dH_1(\mathbf{X}(t),t)}{dt}
&=&\left.\nabla_{\mathbf{X}}H_1(\bX,t)
\cdot\dot{\mathbf{X}}+\frac{\partial H_1(\bX,t)}{\partial t}\right|_{\bX=\bX(t)}\nn\\
&=&-4NM_0+2\sum_{1\leq j<l\leq N}(\mathbf
x_j(t)-\mathbf x_l(t))\cdot(\dot{\mathbf x}_j(t)-\dot {\mathbf x}_l(t)),\qquad t\ge0.
\eea
Using summation by parts and noting  \eqref{mjml} and \eqref{GLE}, we obtain
\bea\label{G12}
I&:=&2\sum_{1\leq j<l\leq N}(\mathbf
x_j(t)-\mathbf x_l(t))\cdot(\dot{\mathbf x}_j(t)-\dot {\mathbf x}_l(t))=\sum_{j,l=1,j\ne l}^N(\mathbf
x_j(t)-\mathbf x_l(t))\cdot(\dot{\mathbf x}_j(t)-\dot {\mathbf x}_l(t))\nn\\
&=&2\sum_{j,l=1,j\ne l}^N(\mathbf
x_j(t)-\mathbf x_l(t))\cdot\left[\sum_{k=1,k\ne j}^N m_j m_k \frac{\bx_j(t)-\bx_k(t)} {|\bx_j(t)-\bx_k(t)|^2}-\sum_{k=1,k\ne l}^N m_l m_k \frac{\bx_l(t)-\bx_k(t)} {|\bx_l(t)-\bx_k(t)|^2}\right]\nn\\
&=&2\sum_{j,l=1,j\ne l}^N2m_jm_l\frac{|\bx_j(t)-\bx_l(t)|^2} {|\bx_j(t)-\bx_l(t)|^2}
+2\sum_{j,l=1,j\ne l}^N(\mathbf
x_j(t)-\mathbf x_l(t))\cdot\sum_{k=1,k\ne j,l}^N m_j m_k \frac{\bx_j(t)-\bx_k(t)} {|\bx_j(t)-\bx_k(t)|^2}\nn\\
&&-2\sum_{j,l=1,j\ne l}^N(\mathbf
x_j(t)-\mathbf x_l(t))\cdot\sum_{k=1,k\ne j,l}^N m_lm_k \frac{\bx_l(t)-\bx_k(t)} {|\bx_l(t)-\bx_k(t)|^2}\nn\\
&=&4\sum_{j,l=1,j\ne l}^Nm_jm_l+2\sum_{j,k=1,j\ne k}^N m_j m_k
\sum_{l=1,l\ne j,k}^N  \frac{(\mathbf
x_j(t)-\mathbf x_l(t))\cdot(\bx_j(t)-\bx_k(t))} {|\bx_j(t)-\bx_k(t)|^2}\nn\\
&&-2\sum_{l,k=1,l\ne k}^N m_l m_k
\sum_{j=1,l\ne l,k}^N  \frac{(\mathbf
x_j(t)-\mathbf x_l(t))\cdot(\bx_l(t)-\bx_k(t))} {|\bx_l(t)-\bx_k(t)|^2}\nn\\
&=&8M_0+2\sum_{j,k=1,j\ne k}^N m_j m_k
\sum_{l=1,l\ne j,k}^N  \frac{(\mathbf
x_j(t)-\mathbf x_l(t))\cdot(\bx_j(t)-\bx_k(t))} {|\bx_j(t)-\bx_k(t)|^2}\nn\\
&&-2\sum_{j,k=1,j\ne k}^N m_j m_k
\sum_{l=1,l\ne j,k}^N  \frac{(\mathbf
x_k(t)-\mathbf x_l(t))\cdot(\bx_j(t)-\bx_k(t))} {|\bx_j(t)-\bx_k(t)|^2}.
\eea
\bea\label{G11}
I&=&8M_0+2\sum_{j,k=1,j\ne k}^N m_j m_k
\sum_{l=1,l\ne j,k}^N  \frac{(\bx_j(t)-\bx_k(t))\cdot \left[(\mathbf
x_j(t)-\mathbf x_l(t))-(\mathbf
x_k(t)-\mathbf x_l(t))\right]} {|\bx_j(t)-\bx_k(t)|^2}\nn\\
&=&8M_0+2\sum_{j,k=1,j\ne k}^N (N-2)m_j m_k = 8M_0+4(N-2)M_0=4NM_0, \qquad t\ge0.
\eea
Plugging \eqref{G12} into \eqref{H135},  we get
\be\label{H169}
\frac{dH_1(\mathbf{X}(t),t)}{dt}=-4NM_0+4NM_0=0, \quad t\ge0,
\ee
which immediately implies the left equation in \eqref{fi11} by noting
the initial condition (\ref{GLEi}).

Similarly, differentiating \eqref{H13} (with $\bX=\bX(t)$) with respect to $t$,  we get
\bea \label{H1356}
\frac{dH_3(\mathbf{X}(t),t)}{dt}&=&\left.\nabla_{\mathbf{X}}H_3(\bX,t)
\cdot\dot{\mathbf{X}}+\frac{\partial H_3(\bX,t)}{\partial t}\right|_{\bX=\bX(t)}\nn\\
&=&-4(N-2)M_0+2\sum_{1\leq j<l\leq N}(\mathbf
x_j(t)+\mathbf x_l(t))\cdot(\dot{\mathbf x}_j(t)+\dot {\mathbf x}_l(t)),
\quad t\ge0.
\eea
Similar to \eqref{G12}, noting \eqref{GLE} and \eqref{mjml}, we get
\bea\label{G13}
II&:=&2\sum_{1\leq j<l\leq N}(\mathbf
x_j(t)+\mathbf x_l(t))\cdot(\dot{\mathbf x}_j(t)+\dot {\mathbf x}_l(t))=\sum_{j,l=1,j\ne l}^N(\mathbf
x_j(t)+\mathbf x_l(t))\cdot(\dot{\mathbf x}_j(t)+\dot {\mathbf x}_l(t))\nn\\
&=&2\sum_{j,l=1,j\ne l}^N(\mathbf
x_j(t)+\mathbf x_l(t))\cdot\left[\sum_{k=1,k\ne j}^N m_j m_k \frac{\bx_j(t)-\bx_k(t)} {|\bx_j(t)-\bx_k(t)|^2}+\sum_{k=1,k\ne l}^N m_l m_k \frac{\bx_l(t)-\bx_k(t)} {|\bx_l(t)-\bx_k(t)|^2}\right]\nn\\
&=&2\sum_{j,k=1,j\ne k}^N m_j m_k
\sum_{l=1,l\ne j,k}^N  \frac{(\mathbf
x_j(t)+\mathbf x_l(t))\cdot(\bx_j(t)-\bx_k(t))} {|\bx_j(t)-\bx_k(t)|^2}\nn\\
&&+2\sum_{l,k=1,l\ne k}^N m_l m_k
\sum_{j=1,l\ne l,k}^N  \frac{(\mathbf
x_j(t)+\mathbf x_l(t))\cdot(\bx_l(t)-\bx_k(t))} {|\bx_l(t)-\bx_k(t)|^2}\nn\\
&=&2\sum_{j,k=1,j\ne k}^N m_j m_k
\sum_{l=1,l\ne j,k}^N  \frac{(\mathbf
x_j(t)+\mathbf x_l(t))\cdot(\bx_j(t)-\bx_k(t))} {|\bx_j(t)-\bx_k(t)|^2}\nn\\
&&-2\sum_{j,k=1,j\ne k}^N m_j m_k
\sum_{l=1,l\ne j,k}^N  \frac{(\mathbf
x_k(t)+\mathbf x_l(t))\cdot(\bx_j(t)-\bx_k(t))} {|\bx_j(t)-\bx_k(t)|^2}\nn\\
&=&2\sum_{j,k=1,j\ne k}^N m_j m_k
\sum_{l=1,l\ne j,k}^N  \frac{(\bx_j(t)-\bx_k(t))\cdot \left[(\mathbf
x_j(t)+\mathbf x_l(t))-(\mathbf
x_k(t)+\mathbf x_l(t))\right]} {|\bx_j(t)-\bx_k(t)|^2}\nn\\
&=&2\sum_{j,k=1,j\ne k}^N (N-2)m_j m_k =4(N-2)M_0, \qquad t\ge0.
\eea
Plugging \eqref{G13} into \eqref{H1356}, we get
\be\label{H1696}
\frac{dH_3(\mathbf{X}(t),t)}{dt}=-4(N-2)M_0+4(N-2)M_0=0, \qquad t\ge0,
\ee
which immediately implies \eqref{fi12} by noting
the initial condition (\ref{GLEi}).

From \eqref{H12} and \eqref{H13}, it is easy to see that
\be \label{H267}
H_2(\mathbf{X}(t),t)=\frac{1}{2(N-1)}\left[
H_1(\mathbf{X}(t),t)+H_3(\mathbf{X}(t),t)\right],
\qquad  t\ge0.
\ee
Differentiating \eqref{H267} with respect to $t$, noticing \eqref{H169} and
\eqref{H1696}, we have
\be
\frac{dH_2(\mathbf{X}(t),t)}{dt}=\frac{1}{2(N-1)}\left[
\frac{dH_1(\mathbf{X}(t),t)}{dt}+\frac{dH_3(\mathbf{X}(t),t)}{dt}\right]=0,
\qquad t\ge0,\nonumber
\ee
which immediately implies the right equation in \eqref{fi11} by noting
the initial condition (\ref{GLEi}). Therefore $H_1(\bX, t)$, $H_2(\bX, t)$ and $H_3(\bX, t)$  are three non-autonomous first integrals of
the ODEs (\ref{GLE}).
\end{proof}

\subsection{Global existence in the case with the same winding number}
Let $m_0=+1$ or $-1$ be fixed.
When the $N$ quantized vortices have the same winder number, e.g. $m_0$,  we have
\begin{theorem}\label{NNC}
Suppose the $N$ vortices have the same winding number, i.e. $m_j=m_0$ for $1 \le j\le N$ in \eqref{GLE}, then
$T_{\rm max}=+\infty$, i.e. there is no finite time collision among the $N$ quantized vortices. In addition, at least two
vortices move to infinity as $t\rightarrow +\infty$.
\end{theorem}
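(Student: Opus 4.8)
The plan is to combine the gradient-flow structure \eqref{GLEv} with the non-autonomous first integral $H_1$ from Lemma~\ref{lem1}. Since $m_j=m_0$ for $1\le j\le N$, one has $m_jm_k=1$ whenever $j\ne k$, so by \eqref{mjml} $M_0=\tfrac{N(N-1)}{2}>0$ (recall $N\ge2$), $W(\mathbf{X})=-2\sum_{1\le j<l\le N}\ln|\mathbf{x}_j-\mathbf{x}_l|$, and Lemma~\ref{lem1} reads $\sum_{1\le j<l\le N}|\mathbf{x}_j(t)-\mathbf{x}_l(t)|^2=H_1^0+4NM_0t$ for $0\le t<\Tmax$. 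From \eqref{GLEv}, $\frac{d}{dt}W(\mathbf{X}(t))=-|\dot{\mathbf{X}}(t)|^2\le0$, so $t\mapsto W(\mathbf{X}(t))$ is non-increasing.

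To prove $\Tmax=+\infty$, I would argue by contradiction and suppose $\Tmax<+\infty$. Then every pairwise distance satisfies $d_{jl}(t)^2\le H_1^0+4NM_0\Tmax=:C$ on $[0,\Tmax)$, hence $W(\mathbf{X}(t))=-2\sum_{j<l}\ln d_{jl}(t)\ge-\binom{N}{2}\ln C$ is bounded below; being also non-increasing, $W(\mathbf{X}(t))$ has a finite limit $W^{\ast}$ as $t\to\Tmax^-$. The dissipation identity then gives $\int_0^{\Tmax}|\dot{\mathbf{X}}(t)|^2\,dt=W(\mathbf{X}^0)-W^{\ast}<\infty$, and Cauchy--Schwarz on the finite interval $[0,\Tmax)$ yields $\int_0^{\Tmax}|\dot{\mathbf{X}}(t)|\,dt<\infty$, so $\mathbf{X}(t)$ converges to some $\mathbf{X}^{\ast}\in{\mathbb R}^{2\times N}$ as $t\to\Tmax^-$. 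Maximality of $\Tmax$ forces $\mathbf{X}^{\ast}\notin{\mathbb R}_{\ast}^{2\times N}$, i.e.\ $d_{jl}(t)\to0$ for at least one pair $(j,l)$; but then $W(\mathbf{X}(t))\to+\infty$, since that term blows up while each of the remaining $\binom{N}{2}-1$ terms stays $\ge-\tfrac12\ln C$, contradicting $W^{\ast}<\infty$. Hence $\Tmax=+\infty$. (Alternatively one may invoke the standard fact that a maximal solution with $\Tmax<\infty$ must leave every compact subset of ${\mathbb R}_{\ast}^{2\times N}$, combine it with the position bound $\sum_j|\mathbf{x}_j(t)|^2=H_2^0+4M_0t$ to force a collision, and then run the same $W\to+\infty$ argument.)

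For the final assertion I would again use $M_0>0$: $\sum_{1\le j<l\le N}|\mathbf{x}_j(t)-\mathbf{x}_l(t)|^2=H_1^0+4NM_0t\to+\infty$. By mass-center conservation \eqref{mccon1} and the elementary identity $\sum_{1\le j<l\le N}|\mathbf{x}_j-\mathbf{x}_l|^2=N\sum_{j=1}^N|\mathbf{x}_j-\bar{\mathbf{x}}^0|^2$ (valid since $\sum_j(\mathbf{x}_j-\bar{\mathbf{x}}^0)=0$), it follows that $\sum_{j=1}^N|\mathbf{x}_j(t)-\bar{\mathbf{x}}^0|^2=\tfrac1N H_1^0+4M_0t\to+\infty$. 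As this is a sum of $N$ nonnegative terms, $\max_{1\le j\le N}|\mathbf{x}_j(t)-\bar{\mathbf{x}}^0|\to+\infty$, so at least one vortex is unbounded; and if exactly one, say $\mathbf{x}_1$, were unbounded while $\mathbf{x}_2,\dots,\mathbf{x}_N$ stayed bounded, then $\mathbf{x}_1(t)-\bar{\mathbf{x}}^0=-\sum_{j=2}^N(\mathbf{x}_j(t)-\bar{\mathbf{x}}^0)$ would be bounded as well, a contradiction. Hence at least two of the $N$ vortices satisfy $\limsup_{t\to+\infty}|\mathbf{x}_j(t)|=+\infty$, i.e.\ at least two move to infinity.

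I expect the only delicate point to be the middle step, namely ruling out that the trajectory approaches the collision set ${\mathbb R}^{2\times N}\setminus{\mathbb R}_{\ast}^{2\times N}$ in finite time; this is what the $H^1$-in-time bound coming from the gradient identity is for (equivalently, the abstract extension criterion together with the $H_2$-bound on positions). Everything else is bookkeeping with the conserved quantity $H_1$ and the conserved mass center.
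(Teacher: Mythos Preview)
Your proof is correct and takes a genuinely different route from the paper. The paper argues by contradiction via a \emph{local} analysis of a putative collision cluster $I\subsetneq\{1,\dots,N\}$: first it uses the first integral $H_1$ to rule out $|I|=N$, then it computes $\dot D_I(t)$ for $D_I=\sum_{j<l\in I}d_{jl}^2$, splits the velocity into intra-cluster and inter-cluster parts, and shows $\dot D_I(t)>0$ near the collision time, contradicting $D_I(T_{\rm max})=0$. Your argument instead exploits the gradient-flow structure \eqref{GLEv} globally: you use $H_1$ only to bound pairwise distances from above (hence $W$ from below), combine this with the monotonicity of $W$ to extract a finite energy limit and an $L^2$-in-time velocity bound, and then use Cauchy--Schwarz on the finite interval to force $\mathbf{X}(t)\to\mathbf{X}^\ast$; a collision then sends $W\to+\infty$, a contradiction. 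Your approach is cleaner and more conceptual (it avoids the hands-on estimate of $\dot D_I$ and, incidentally, proves along the way that $\mathbf{X}(t)$ converges as $t\to T_{\rm max}^-$, a fact the paper simply assumes when it invokes a ``collision cluster''). The paper's approach, on the other hand, yields a quantitative monotonicity statement near collision and dovetails with the later proofs (Theorems~\ref{cdi}--\ref{N234} and Theorem~\ref{fi}(iv)) that reuse the same $\dot D_I$ calculation. For the ``at least two vortices'' part, both proofs are essentially the same (growth of $H_1$ or $H_2$ plus mass-center conservation); your use of $\limsup$ is in fact slightly more careful than the paper's phrasing.
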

\begin{proof} The proof will be proceeded by the method of contradiction.
Assume $0<T_{\rm max}<+\infty$, i.e. there exist $M$ ($2\le M\le N$) vortices
(without loss of generality, we assume here that they are $\mathbf
x_1,\ldots,\mathbf x_M$)
that collide at a fixed point $\bx^0\in{\mathbb R}^2$ and the rest
$N-M$ vortices are all away from this point. Taking $t=T_{\rm max}$ in the left equation in \eqref{fi11}, noting \eqref{mjml}, \eqref{H12} and $|N^+-N^-|=N$, we get
\beas
0&<&H_1^0=H_1(\bX(T_{\rm max}),T_{\rm max})=-4NM_0T_{\rm max}
+\sum_{1\leq j<l\leq
N}|\mathbf{x}_j(T_{\rm max})-\mathbf{x}_l(T_{\rm max})|^2 \nn\\
&=&-2N^2(N-1)T_{\rm max}
+\sum_{j=M+1}^N|\mathbf{x}_j(T_{\rm max})-\bx^0|^2+ \sum_{M+1\leq j<l\leq
N}|\mathbf{x}_j(T_{\rm max})-\mathbf{x}_l(T_{\rm max})|^2,
\eeas
which immediately implies that $2\le M<N$.
Denote the non-empty sets $I=\{1,\ldots,M\}$ and $J=\{M+1,\ldots,N\}$, and
define
\be\label{DIt10}
D_I(t)=\sum_{1\leq j<l\leq
M}d_{jl}^2(t), \qquad  d_{I,J}(t)=\min_{j\in I, l\in
J}d_{jl}(t),\qquad 0\le t\le T_{\rm max},
\ee
where
\begin{equation}
d_{jl}(t)=|\bx_j(t)-\bx_l(t)|, 
\quad t\ge0, \quad 1\le j< l\le N.\nonumber
\end{equation}
Then we have
\be
\lim_{t\rightarrow T_{\rm max}^-}D_{I}(t)=0,\qquad  \mathop{\underline
{\lim } }\limits_{t \rightarrow T_{\rm max}^- }d_{I,J}(t)>0,\nonumber
\ee
which yields
\be
d_1:=\min_{0\le t\le T_{\rm max}} d_{I,J}(t)>0.\nonumber
\ee
Choose $\varepsilon= \frac{Md_1}{3(N-M)}>0$, then there exists a
$0<T_1<T_{\rm max}$ such that
\[  0\le D_{I}(t)<\varepsilon, \qquad T_1\le t\le T_{\rm max}.\]
Differentiating \eqref{DIt10} with respect to $t$, we obtain
\bea
\dot D_I(t)&=&2\sum_{1\leq j<l\leq M}(\mathbf
x_j(t)-\mathbf x_l(t))\cdot(\dot{\mathbf x}_j(t)-\dot {\mathbf x}_l(t))=\sum_{j,l=1,j\ne l}^M(\mathbf
x_j(t)-\mathbf x_l(t))\cdot(\dot{\mathbf x}_j(t)-\dot {\mathbf x}_l(t))\nn\\
&=&2\sum_{j,l=1,j\ne l}^M(\mathbf
x_j(t)-\mathbf x_l(t))\cdot\left[\sum_{k=1,k\ne j}^N  \frac{\bx_j(t)-\bx_k(t)} {d_{jk}^2(t)}-\sum_{k=1,k\ne l}^N  \frac{\bx_l(t)-\bx_k(t)} {d_{lk}^2(t)}\right]\nn\\
&=&2\sum_{j,l=1,j\ne l}^M(\mathbf
x_j(t)-\mathbf x_l(t))\cdot\left[\sum_{k=1,k\ne j}^M \frac{\bx_j(t)-\bx_k(t)} {d_{jk}^2(t)}- \sum_{k=1,k\ne l}^M \frac{\bx_l(t)-\bx_k(t)} {d_{lk}^2(t)}\right]\nn\\
&&+2\sum_{j,l=1,j\ne l}^M(\mathbf
x_j(t)-\mathbf x_l(t))\cdot\sum_{k=M+1}^N  \left[\frac{\bx_j(t)-\bx_k(t)} {d_{jk}^2(t)}-\sum_{k=M+1}^N \frac{\bx_l(t)-\bx_k(t)} {d_{lk}^2(t)}\right].\nonumber
\eea
Similar to \eqref{H135} via \eqref{G12} (with details omitted here for brevity), we get
\bea\label{da}
\dot D_I(t)&=&4\sum_{1\leq j<l \leq M}\left[M+(\mathbf
x_j(t)-\mathbf x_l(t))\cdot \sum_{k=M+1}^N\left(\frac{ \bx_j(t)-\bx_k(t)}{d_{jk}^2(t)}-\frac{\bx_l(t)-\bx_k(t)}{d_{lk}^2(t)}\right)\right]\nn\\
&\ge&4\sum_{1\leq j<l \leq M}\left[M-d_{jl}(t)\sum_{k=M+1}^N\left(\frac{1}{d_{jk}(t)}+
\frac{1}{d_{lk}(t)}\right)\right]\ge4\sum_{1\leq j<l \leq
M}\left[M-2\sum_{k=M+1}^N\frac{\varepsilon}{d_1}\right]\nn\\
&=&2M(M-1)\left(M-2(N-M)\frac{\varepsilon}{d_1}\right) >0,
\qquad T_1\le t\le T_{\rm max},
\eea
which immediately implies that $0=D_I(T_{\rm max}^-)\ge D_I(T_1)>0.$
This is a contradiction, and thus $T_{\rm max}=+\infty$, i.e.
there is no finite time collision among the $N$ quantized vortices.

Noticing $M_0=\frac{1}{2}N(N-1)>0$, combining \eqref{H12} and \eqref{fi11},
we get $\sum_{j=1}^N|\bx_j(t)|^2=
\lim_{t\to+\infty}H_2^0+4M_0t\to +\infty$ when
$t\to+\infty$.  Hence there exists an $1\le i_0\le N$ such that
$|\bx_{i_0}(t)|\to +\infty$ when $t\to+\infty$.
Due to the conservation of mass center, i.e. $\bar \bx(t):=\frac{1}{N}\sum_{j=1}^N \bx_j(t)\equiv \bar \bx(0)$,
there exists at least another $1\le j_0\neq i_0\le N$
such that $|\bx_{j_0}(t)|\to +\infty$ when $t\to+\infty$.
Thus there exist at least two
vortices  move to infinity when $t\rightarrow +\infty$.
\end{proof}

  Define
\be\label{dmin11}
d_{\rm min}(t)=\min_{1\leq j<l\leq N}d_{jl}(t),\quad D_{\rm min}(t)=d_{\rm min}^2(t), \quad D_{jl}(t)=d_{jl}^2(t), \quad 1\le j\ne l\le N,\quad t\ge0.
\ee
Then it is easy to see that $d_{\rm min}(t)$ and $D_{\rm min}(t)$ are continuous and piecewise smooth functions. In addition, for $1\le j<l\le N$, noting \eqref{GLE}, we have
\bea\label{Djlt1}
\dot D_{jl}(t)
&=&2(\mathbf
x_j(t)-\mathbf x_l(t))\cdot(\dot{\mathbf x}_j(t)-\dot {\mathbf x}_l(t))\nn\\
&=&4(\mathbf
x_j(t)-\mathbf x_l(t))\cdot\left[\sum_{k=1,k\ne j}^N \frac{\bx_j(t)-\bx_k(t)} {d_{jk}^2(t)}-\sum_{k=1,k\ne l}^N  \frac{\bx_l(t)-\bx_k(t)} {d_{lk}^2(t)}\right]\nn\\
&=&4\left[2+(\mathbf
x_j(t)-\mathbf x_l(t))\cdot\sum_{k=1,k\ne j,l}^N\left(\frac{\bx_j(t)-\bx_k(t)} {d_{jk}^2(t)}-
\frac{\bx_l(t)-\bx_k(t)} {d_{lk}^2(t)}\right)\right], \quad t\ge0.\quad
\eea


When the $N$ vortices are initially collinear,
we have

\begin{theorem}\label{cdi}
Suppose the $N$ vortices have the same winding number, i.e. $m_j=m_0$ for $1 \le j\le N$ in \eqref{GLE}, and the initial data $\mathbf{X}^0$ in (\ref{GLEi}) is collinear, then
$d_{\rm min}(t)$ and $D_{\rm min}(t)$ are monotonically increasing functions.
\end{theorem}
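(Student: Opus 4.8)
The plan is to reduce the problem to a scalar, strictly ordered system and then to estimate the time derivative of the smallest consecutive gap. Since all winding numbers equal $m_0$, Theorem \ref{NNC} gives $T_{\rm max}=+\infty$, and since $\bX^0$ is collinear, Lemma \ref{lmcolin} gives $\bX(t)\in S_{\bf e}(\bx^0)$ for all $t\ge 0$; thus $\bx_j(t)=\bx^0+a_j(t){\bf e}$, and \eqref{GLEaj} together with $m_jm_k=m_0^2=1$ becomes the scalar system $\dot a_j(t)=2\sum_{k\ne j}1/(a_j(t)-a_k(t))$. After relabelling we may assume $a_1^0<\cdots<a_N^0$; because $\bx_j(t)\ne\bx_l(t)$ (equivalently $a_j(t)\ne a_l(t)$) for all $t$, continuity forces $a_1(t)<\cdots<a_N(t)$ for all $t\ge 0$. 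Writing $g_j(t):=a_{j+1}(t)-a_j(t)>0$, the elementary observation that among ordered points the minimal pairwise distance is realized by a consecutive pair gives $D_{\rm min}(t)=\min_{1\le j\le N-1}g_j(t)^2$ and $d_{\rm min}(t)=\min_{1\le j\le N-1}g_j(t)$.

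Specialising \eqref{Djlt1} to this collinear, ordered configuration (or differentiating $g_j^2$ directly) yields
\[
\frac{d}{dt}\bigl[g_j(t)^2\bigr]=8-4\,g_j(t)^2\!\!\sum_{k=1,\,k\ne j,\,j+1}^{N}\frac{1}{\bigl(a_j(t)-a_k(t)\bigr)\bigl(a_{j+1}(t)-a_k(t)\bigr)},
\]
where every summand is strictly positive because $a_k\notin[a_j,a_{j+1}]$ makes $a_j-a_k$ and $a_{j+1}-a_k$ have the same sign. Now suppose, at some time $t$, that $g_{j_0}(t)=d_{\rm min}(t)$, so every gap is $\ge g_{j_0}(t)$. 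For $k\le j_0-1$ this gives $a_{j_0}-a_k\ge(j_0-k)g_{j_0}$ and $a_{j_0+1}-a_k\ge(j_0+1-k)g_{j_0}$, with symmetric bounds for $k\ge j_0+2$; substituting and using the telescoping identity $\sum_{p=1}^{m}\frac{1}{p(p+1)}=1-\frac{1}{m+1}<1$, each of the two one-sided sums is $<g_{j_0}(t)^{-2}$, so the whole sum is $<2g_{j_0}(t)^{-2}$ and hence $\frac{d}{dt}[g_{j_0}(t)^2]>8-4g_{j_0}(t)^2\cdot 2g_{j_0}(t)^{-2}=0$ (this also covers $N=2$, where the sum is empty).

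Finally, $D_{\rm min}$ is the pointwise minimum of the finitely many smooth positive functions $g_j^2$, hence locally Lipschitz; near any $t_0$ only the active indices $\{j:g_j(t_0)=d_{\rm min}(t_0)\}$ contribute, and by the previous step each of them has strictly positive derivative at $t_0$. A standard argument with one-sided (Dini) derivatives, or Danskin's theorem, then gives $D_{\rm min}(t)>D_{\rm min}(t_0)$ for all $t$ in a right-neighbourhood of every $t_0$, and a continuous function with this property on $[0,+\infty)$ is strictly increasing; since $d_{\rm min}=\sqrt{D_{\rm min}}$, the same holds for $d_{\rm min}$, which proves the theorem.

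The main obstacles I expect are: (i) the reduction, which genuinely needs both that the flow stays collinear (Lemma \ref{lmcolin}) and that it never collides (Theorem \ref{NNC}) to guarantee the $a_j(t)$ remain ordered, together with the remark that $d_{\rm min}$ is a minimum over \emph{consecutive} gaps; (ii) the nonsmoothness of $d_{\rm min}$ at instants where the minimizing gap switches, which forces the use of one-sided derivatives rather than manipulating $\frac{d}{dt}d_{\rm min}$ naively; and (iii) the estimate in the second paragraph must be exactly this sharp, since $\sum_{p\ge1}1/(p(p+1))=1$ is precisely what makes the two positive contributions in $\frac{d}{dt}[g_{j_0}^2]$ cancel the constant $8$ with nothing to spare — any cruder bound on the interaction sum would fail once $N$ is large.
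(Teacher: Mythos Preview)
Your proof is correct and follows essentially the same route as the paper: reduce to the scalar ordered system via Lemma~\ref{lmcolin}, differentiate the squared minimal gap using \eqref{Djlt1}, and control the interaction sum by the telescoping bound $\sum_{p\ge1}1/(p(p+1))$, which in the paper's notation yields $\dot D_{\rm min}(t)\ge 4\bigl(\tfrac{1}{i_0}+\tfrac{1}{N-i_0}\bigr)>0$ at the active index. Your treatment of the nonsmoothness of $D_{\rm min}$ via Dini derivatives is a bit more careful than the paper's, which simply works on each piecewise-smooth interval and patches by continuity; and you keep a single global ordering of the $a_j$ (using $T_{\rm max}=+\infty$ from Theorem~\ref{NNC}), whereas the paper re-orders on each interval --- but these are cosmetic differences, not substantive ones.
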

\begin{proof}
Since $\mathbf{X}^0$ is collinear,
there exist $\bx^0\in {\mathbb R}^2$ and a unit vector ${\bf e}\in {\mathbb R}^2$ such that  $\bX^0\in S_{{\bf e}}(\bx^0)$, by Lemma \ref{lmcolin},
we know that $\bX(t)\in S_{{\bf e}}(\bx^0)$ for $t\ge0$.
Thus there exist $a_j(t)$ ($1\le j\le N$) satisfying $a_j(t)\ne a_l(t)$ for
$1\le j<l\le N$ such that
\be\label{colin2}
\bx_j(t)=\bx^0+a_j(t) {\bf e}, \qquad t\ge0, \qquad 1\le j\le N.
\ee
Taking $0\le t_0<t_1$ such that $d_{\rm min}(t)$ is smooth on $[t_0,t_1)$,
without loss of generality, we assume that there exists $1\leq i_0\leq N-1$ (otherwise by re-ordering) such that
\be\label{ajlt}
a_1(t)<a_2(t)<\ldots <a_{i_0}(t)<a_{i_0+1}(t)<\ldots<a_N(t), \quad d_{\rm min}(t)=d_{i_0,i_0+1}(t), \quad t_0\le t<t_1.
\ee
Plugging $j=i_0$ and $l=i_0+1$ into \eqref{Djlt1} and noting \eqref{ajlt},
 \eqref{colin2} and \eqref{dmin11}, we gave
\bea
\dot D_{\rm min}(t)&=&\dot D_{i_0,i_0+1}(t)=4\left[2+d_{\rm min}(t)\sum_{k=1,k\ne i_0,i_0+1}^N\left(\frac{1} {d_{i_0k}(t)}-
\frac{1} {d_{i_0+1,k}(t)}\right)\right]\nn\\
&=&4\left[2-\sum_{k=1}^{i_0-1}
\frac{d_{\rm min}^2(t)} {d_{i_0k}(t)d_{i_0+1,k}(t)}-\sum_{k=i_0+2}^N
\frac{d_{\rm min}^2(t)} {d_{i_0k}(t)d_{i_0+1,k}(t)}\right]\nn\\
&\ge&4\left[2-\sum_{k=1}^{i_0-1}\frac{1}
{(i_0-k)(i_0+1- k)}-\sum_{k=i_0+2}^{N}\frac{1}
{(k-i_0)(k-i_0-1)}\right]\nn\\
&=&4\left(\frac{1}{i_0}+\frac{1}{N-i_0}\right)>0, \qquad t_0\le t<t_1.\nonumber
\eea
Here we used $\frac{d_{jl}(t)}{d_{\rm min}(t)}\ge |j-l|$ for $1\le j<l\le N$ by noting \eqref{ajlt}. Thus $D_{\rm min}(t)$ (and $d_{\rm min}(t)$) is
a monotonically increasing function over $[t_0,t_1)$. Therefore,
$D_{\rm min}(t)$ (and $d_{\rm min}(t)$) is
a monotonically increasing function over its every piecewise smooth interval.
Due to that it is a continuous function, thus
$D_{\rm min}(t)$ (and $d_{\rm min}(t)$) is
a monotonically increasing function for $t\ge0$.
\end{proof}

Similarly,  when $2\le N\le 4$ and $\bX^0\in{\mathbb R}_*^{2\times N}$, we have

\begin{theorem}\label{N234}
Suppose $2\le N\le 4$ and the $N$ vortices have the same winding number, i.e. $m_j=m_0$ for $1 \le j\le N$ in \eqref{GLE}, then
$d_{\rm min}(t)$ and $D_{\rm min}(t)$ are monotonically increasing functions.
\end{theorem}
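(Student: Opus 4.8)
The plan is to mimic the structure of the proof of Theorem~\ref{cdi}, but without the collinearity assumption, exploiting the smallness of $N$ to control the geometry. As in Theorem~\ref{cdi}, it suffices to show that on every interval $[t_0,t_1)$ on which $d_{\rm min}(t)$ is smooth and equals some fixed pair distance $d_{i_0 j_0}(t)$, the quantity $D_{\rm min}(t)=D_{i_0 j_0}(t)$ has nonnegative (indeed positive) derivative; continuity and piecewise smoothness then upgrade this to monotonicity on $[0,\infty)$. Using \eqref{Djlt1} with $j=i_0$, $l=j_0$, and writing $\bx_j-\bx_l$ in the scalar product, we have
\[
\dot D_{\rm min}(t)=4\left[2+(\bx_{i_0}-\bx_{j_0})\cdot\sum_{k\ne i_0,j_0}\left(\frac{\bx_{i_0}-\bx_k}{d_{i_0k}^2}-\frac{\bx_{j_0}-\bx_k}{d_{j_0k}^2}\right)\right].
\]
For $N=2$ the sum is empty and $\dot D_{\rm min}=8>0$, recovering the known vortex-pair result. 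For $N=3$ there is exactly one extra index $k$, and for $N=4$ there are two; so the entire burden is the $N=3$ and $N=4$ cases, where one must bound a sum of one or two terms of the form $(\bx_{i_0}-\bx_{j_0})\cdot\!\left(\frac{\bx_{i_0}-\bx_k}{d_{i_0k}^2}-\frac{\bx_{j_0}-\bx_k}{d_{j_0k}^2}\right)$ from below by $-1$ (for $N=3$) or by $-1$ in aggregate (for $N=4$).

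The key estimate is geometric: since $d_{i_0 j_0}=d_{\rm min}$ is the \emph{smallest} pairwise distance, we have $d_{i_0 k}\ge d_{\rm min}$ and $d_{j_0 k}\ge d_{\rm min}$ for every third vortex $k$. I would use Cauchy--Schwarz on each summand,
\[
\left|(\bx_{i_0}-\bx_{j_0})\cdot\!\left(\frac{\bx_{i_0}-\bx_k}{d_{i_0k}^2}-\frac{\bx_{j_0}-\bx_k}{d_{j_0k}^2}\right)\right|\le d_{\rm min}\!\left(\frac{1}{d_{i_0k}}+\frac{1}{d_{j_0k}}\right)\le 2,
\]
but the crude bound $2$ per term is not enough: for $N=3$ it gives $\dot D_{\rm min}\ge 4[2-2]=0$, which is only borderline, and for $N=4$ it gives a negative bound. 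So the main obstacle is sharpening this. I expect one must split each summand into its radial part along $\bx_{i_0}-\bx_{j_0}$ and use that the two fractions $\frac{\bx_{i_0}-\bx_k}{d_{i_0k}^2}$ and $\frac{\bx_{j_0}-\bx_k}{d_{j_0k}^2}$ are the gradients (up to sign) of $-\ln d_{i_0k}$ and $-\ln d_{j_0k}$ with respect to a common variable, so that their difference dotted with $\bx_{i_0}-\bx_{j_0}$ is exactly $\frac{d}{ds}\big|_{s}\big(\ln d_{j_0k}-\ln d_{i_0k}\big)$ along the segment; a convexity/monotonicity argument on the function $s\mapsto \ln\frac{d_{j_0k}}{d_{i_0k}}$ as $\bx_{j_0}$ slides toward $\bx_{i_0}$, together with $d_{i_0k},d_{j_0k}\ge d_{\rm min}$, should yield that each summand is $\ge -1$, with the deficit $\frac{1}{i_0}+\frac{1}{N-i_0}$-type gain seen in Theorem~\ref{cdi} appearing here as well once one orders the vortices by a well-chosen coordinate. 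Concretely, writing $\bx_{i_0}-\bx_k=\bx_{i_0}-\bx_{j_0}+(\bx_{j_0}-\bx_k)$ and expanding, one reduces the inner product to $1 - d_{\rm min}^2\,\frac{(\bx_{i_0}-\bx_k)\cdot(\bx_{j_0}-\bx_k)}{d_{i_0k}^2 d_{j_0k}^2}$, and $(\bx_{i_0}-\bx_k)\cdot(\bx_{j_0}-\bx_k)\le d_{i_0k}d_{j_0k}$, so each summand is $\ge 1 - \frac{d_{\rm min}^2}{d_{i_0k}d_{j_0k}}\ge 1 - 1 = 0$ — wait, this already gives each summand $\ge 0$, hence $\dot D_{\rm min}\ge 8>0$ directly.

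So in fact the plan collapses to a short computation: expand $\bx_{i_0}-\bx_k = (\bx_{i_0}-\bx_{j_0})+(\bx_{j_0}-\bx_k)$ inside \eqref{Djlt1}, obtain
\[
\dot D_{\rm min}(t)=4\!\left[2+\sum_{k\ne i_0,j_0}\!\left(1-d_{\rm min}^2(t)\,\frac{(\bx_{i_0}-\bx_k)\cdot(\bx_{j_0}-\bx_k)}{d_{i_0k}^2(t)\,d_{j_0k}^2(t)}\right)\right],
\]
then bound each term in the sum below using Cauchy--Schwarz $(\bx_{i_0}-\bx_k)\cdot(\bx_{j_0}-\bx_k)\le d_{i_0k}d_{j_0k}$ and the minimality $d_{i_0k},d_{j_0k}\ge d_{\rm min}$, giving each summand $\ge 1-\frac{d_{\rm min}^2}{d_{i_0k}d_{j_0k}}\ge 0$; hence $\dot D_{\rm min}\ge 8>0$ on every smooth interval, and by continuity $D_{\rm min}$ and $d_{\rm min}$ are monotonically increasing on $[0,\infty)$. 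The restriction $N\le 4$ is apparently used only to guarantee that the pair realizing $d_{\rm min}$ has at most two ``other'' vortices, but the argument as sketched seems not to need that; I would double-check whether the intended proof instead keeps the coarser per-term bound $2$ and needs $N\le 4$ to make $2-2(N-2)>0$ fail — which it does for $N\ge 4$ — meaning the genuinely delicate point, and the one I would scrutinize most carefully, is whether the sign of $(\bx_{i_0}-\bx_k)\cdot(\bx_{j_0}-\bx_k)$ can be negative (obtuse angle at $\bx_k$), in which case the summand is $>1$ and helps, so no difficulty arises; the Cauchy--Schwarz upper bound covers the worst (acute) case. Thus the only real work is verifying the algebraic identity in the displayed expansion and checking that the pair index $(i_0,j_0)$ can indeed be held fixed on a smooth interval, exactly as in Theorem~\ref{cdi}.
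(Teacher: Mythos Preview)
Your displayed identity is wrong, and the error is fatal. Set $\mathbf{a}=\bx_{i_0}-\bx_{j_0}$, $\mathbf{b}=\bx_{i_0}-\bx_k$, $\mathbf{c}=\bx_{j_0}-\bx_k$, so $\mathbf{a}=\mathbf{b}-\mathbf{c}$. The summand in \eqref{Djlt1} is
\[
(\mathbf{b}-\mathbf{c})\cdot\Bigl(\frac{\mathbf{b}}{|\mathbf{b}|^{2}}-\frac{\mathbf{c}}{|\mathbf{c}|^{2}}\Bigr)
=2-(\mathbf{b}\cdot\mathbf{c})\Bigl(\frac{1}{|\mathbf{b}|^{2}}+\frac{1}{|\mathbf{c}|^{2}}\Bigr),
\]
not $1-|\mathbf{a}|^{2}\,\dfrac{\mathbf{b}\cdot\mathbf{c}}{|\mathbf{b}|^{2}|\mathbf{c}|^{2}}$. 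A one–line check: in one dimension take $\bx_k=0$, $\bx_{j_0}=2$, $\bx_{i_0}=3$; the summand equals $\tfrac{3}{9}-\tfrac{2}{4}=-\tfrac{1}{6}$, while your formula gives $1-\tfrac{6}{36}=\tfrac{5}{6}$. So the claim ``each summand $\ge 0$, hence $\dot D_{\min}\ge 8$'' is false in general. You yourself flagged the suspicious feature that your argument nowhere uses $N\le 4$; this is decisive, since the paper notes (Remark following Theorem~\ref{N234}) that $d_{\min}$ need \emph{not} be monotone when $N\ge 5$, so any argument yielding monotonicity for all $N$ must contain an error.

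The paper's proof runs differently. It keeps the crude Cauchy--Schwarz bound $|S_k|\le d_{\min}\bigl(\tfrac{1}{d_{i_0 k}}+\tfrac{1}{d_{j_0 k}}\bigr)\le 2$ and observes that for $N=2,3$ this already gives $\dot D_{\min}>4\bigl(2-2(N-2)\bigr)\ge 0$. For $N=4$ the crude bound is insufficient, so the paper fixes, for each $k\in\{3,4\}$, the ordering $d_{12}\le d_{1k}\le d_{2k}$ (after relabelling), and uses this to drop the $d_{12}/d_{1k}$ contribution, obtaining $\dot D_{\min}>4\bigl(2-\tfrac{d_{12}}{d_{23}}-\tfrac{d_{12}}{d_{24}}\bigr)\ge 0$. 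The restriction $N\le 4$ is genuinely load-bearing; there is no shortcut that makes it disappear.
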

\begin{proof}
Taking $0\le t_0<t_1$ such that $d_{\rm min}(t)$ is smooth on $[t_0,t_1)$,
without loss of generality, we assume that $d_{\rm min}(t)=d_{12}(t)$ for
$t_0\le t<t_1$ (otherwise by re-ordering).
Taking $j=1$ and $l=2$ in \eqref{Djlt1}, we get for $t_0\le t<t_1$
\be
\dot D_{\rm min}(t)=\dot D_{12}(t)
=4\left[2+(\mathbf
x_1(t)-\mathbf x_2(t))\cdot\sum_{k=1,k\ne 1,2}^N\left(\frac{\bx_1(t)-\bx_k(t)} {d_{1k}^2(t)}-
\frac{\bx_2(t)-\bx_k(t)} {d_{2k}^2(t)}\right)\right].\nonumber
\ee
When $N=2$ or $3$, noting $0<d_{12}(t)\le d_{jl}(t)$ for $1\le j\ne l \le N$,
we get
\be
\dot D_{\rm min}(t)>
4\left[2-
\sum_{k=1,k\ne 1,2}^N\left(\frac{d_{12}(t)} {d_{1k}(t)}+
\frac{d_{12}(t)} {d_{2k}(t)}\right)\right]
\ge4\left(2-2(N-2)\right)\ge0, \qquad t_0\le t<t_1,\nonumber
\ee
which implies that $D_{\rm min}(t)$ and $d_{\rm min}(t)$
are monotonically increasing functions over $t\in [t_0,t_1]$.
When $N=4$, without loss of generality, we can assume
\begin{equation*}
d_{12}(t)\leq d_{13}(t)\leq d_{23}(t), \quad  d_{12}(t)\leq d_{14}(t)\leq d_{24}(t),\qquad t_0\le t<t_1,
\end{equation*}
then we get
\be
\dot D_{\rm min}(t)>
4\left[2-
\left(\frac{d_{12}(t)} {d_{23}(t)}+
\frac{d_{12}(t)} {d_{24}(t)}\right)\right] \ge0, \qquad t_0\le t<t_1,\nonumber
\ee
which implies that $D_{\rm min}(t)$ and $d_{\rm min}(t)$
are monotonically increasing functions over $t\in [t_0,t_1]$.
\end{proof}

\begin{remark}
When $N\ge5$ and the initial data $\bX^0\in
{\mathbb R}_*^{2\times N}$ is not collinear, $d_{\rm min}(t)$ might not be a
monotonically increasing function, especially when $0\le t\ll 1$. Based on our extensive numerical results, for any given  $\bX^0\in
{\mathbb R}_*^{2\times N}$, there exits a constant $T_{0}>0$ depending on $\bX^0$ such that $d_{\rm min}(t)$
is a monotonically increasing function when $t\ge T_{0}$.
Rigorous mathematical justification is ongoing.
\end{remark}


\subsection{Finite time collision in the case with opposite winding numbers}
When the $N$ vortices have opposite winding numbers, we have

\begin{theorem}\label{fi}
Suppose the $N$ vortices have opposite winding numbers,
i.e. $|N^+-N^-|<N$, we have

(i)  If $M_0<0$, finite time collision happens, i.e.
$0<T_{\rm max}<+\infty$,
and there exists a collision cluster among the $N$ vortices. In addition,
$T_{\rm max}\leq T_a:=-\frac{H_1^0}{4NM_0}$.

(ii) If $M_0=0$, then the solution of {\rm(\ref{GLE})} is
bounded, i.e.
\be\label{bxjb1}
|\bx_j(t)|\le \sqrt{H_2^0}=\sqrt{\sum_{j=1}^N|\bx_j^0|^2}, \qquad t\ge0, \qquad 1\le j\le N.
\ee

(iii) If $M_0>0$ and there is no finite time collision, i.e.
$T_{\rm max}=+\infty$, then at least two
vortices move to infinity as $t\rightarrow +\infty$.

(iv) 
Let $I\subseteq \{1,2,\ldots, N\}$ be a set with $M$ ($2\le M\le N$) elements.
If the collective winding number of $I$ defined as $M_1:=\frac{1}{2}\sum_{j,l\in I,j\ne l} m_jm_l\ge0$, then the set
of vortices $\{\bx_j(t) \ |\ j\in I\}$ cannot be a collision cluster
among the $N$ vortices  for $0\le t\le T_{\rm max}$.
\end{theorem}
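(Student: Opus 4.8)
The plan is to obtain all four parts from the first integrals of Lemma~\ref{lem1} together with conservation of the mass center, in the monotonicity/contradiction style of Theorems~\ref{NNC}--\ref{N234}. Throughout set $J=\{1,\dots,N\}\setminus I$ and use the explicit forms $\sum_{1\le j<l\le N}|\bx_j(t)-\bx_l(t)|^2=H_1^0+4NM_0t$ and $\sum_{j=1}^N|\bx_j(t)|^2=H_2^0+4M_0t$. Parts~(ii) and~(iii) are then immediate. If $M_0=0$, the second identity makes $\sum_j|\bx_j|^2$ an autonomous first integral, so $|\bx_j(t)|^2\le\sum_k|\bx_k(t)|^2=H_2^0$ for all $j$ and all $t\ge0$, which is \eqref{bxjb1}. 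If $M_0>0$ and $T_{\rm max}=+\infty$, then $\max_j|\bx_j(t)|^2\ge\frac1N(H_2^0+4M_0t)\to+\infty$, and, exactly as in the second half of the proof of Theorem~\ref{NNC}, conservation of $\bar\bx(t)\equiv\bar\bx^0$ rules out a single escaping vortex, so at least two vortices move to infinity.

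\textbf{Part (i).} Since $M_0<0$, the right side of $\sum_{j<l}|\bx_j-\bx_l|^2=H_1^0+4NM_0t$ vanishes at $t=T_a=-H_1^0/(4NM_0)>0$, while its left side stays strictly positive whenever $\bX(t)\in\mathbb R_*^{2\times N}$; hence $T_{\rm max}\le T_a<+\infty$, i.e. a finite-time collision occurs. Since $\sum_j|\bx_j(t)|^2=H_2^0+4M_0t$ is bounded on $[0,T_{\rm max}]$, the orbit stays in a fixed compact set; taking $t_n\to T_{\rm max}^-$ along which every $\bx_j(t_n)$ converges and letting $I$ be a non-trivial block of the partition $j\sim l\iff\lim_n\bx_j(t_n)=\lim_n\bx_l(t_n)$ (such a block exists because $\bX(t)$ must approach $\partial\mathbb R_*^{2\times N}$ as $t\to T_{\rm max}^-$), a routine continuation/compactness argument (using boundedness of $\bX$) upgrades the block-limit to a genuine limit and shows $\min_{j\in I,\,l\in J}d_{jl}(t)$ stays bounded below, so $I$ is a collision cluster.

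\textbf{Part (iv).} Suppose for contradiction that $I$ (with $|I|=M$, $2\le M\le N$, $M_1\ge0$) forms a collision cluster at some $0<T_c\le T_{\rm max}$, with common limit $\bx^0$ and $d_{I,J}(t):=\min_{j\in I,\,l\in J}d_{jl}(t)\ge d_1>0$ on $[0,T_c]$, and put $D_I(t)=\sum_{j<l\in I}d_{jl}^2(t)\to0^+$. Differentiating $D_I$ along \eqref{GLE} and performing the summation by parts of \eqref{G12}--\eqref{G11} with all indices restricted to $I$ splits $\dot D_I$ into an internal part, equal to $4MM_1$ exactly (this is the $H_1$-computation for an $M$-vortex system of collective index $M_1$), plus a cross part in which every factor $(\bx_j-\bx_k)/|\bx_j-\bx_k|^2$ with $j\in I$, $k\in J$ has norm at most $1/d_1$, so that the cross part is bounded in absolute value by $\frac{C}{d_1}\sum_{j<l\in I}d_{jl}\le C'\sqrt{D_I(t)}$ (Cauchy--Schwarz), $C,C'=C,C'(M,N,d_1)$. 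Hence $\dot D_I(t)\ge 4MM_1-C'\sqrt{D_I(t)}$. If $M_1>0$, then once $t$ is close enough to $T_c$ that $C'\sqrt{D_I(t)}\le 2MM_1$ we have $\dot D_I(t)\ge 2MM_1>0$, so $D_I$ is strictly increasing near $T_c$, contradicting $D_I(t)\to0^+$; if $M=N$ there is no cross part and $D_I(t)=H_1^0+4NM_1t\ge H_1^0>0$, again a contradiction. This proves the claim whenever $M_1>0$ or $M=N$.

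\textbf{The delicate case and main obstacle.} What remains is $M_1=0$ with $2\le M<N$, which, by \eqref{mjml} applied to $I$, forces $M$ to be a perfect square $\ge4$; here the estimate degenerates to $\dot D_I\ge-C'\sqrt{D_I}$, i.e. $\bigl|\tfrac{d}{dt}\sqrt{D_I}\bigr|\le C'/2$, which alone does not exclude $D_I\to0$, so $D_I$ must be replaced by a finer quantity. I would proceed as follows: (a) with $\bu_j=\bx_j-\bar\bx_I$ (so $\sum_{j\in I}\bu_j=0$, $\sum_{j\in I}|\bu_j|^2=D_I/M$), Taylor-expand $(\bx_j-\bx_k)/|\bx_j-\bx_k|^2$ in $\bu_j$ about $\bx_k-\bar\bx_I$ to recast the cross part of $\dot D_I$ as $4M\,\mathbf c(t)\cdot\bigl(\sum_{j\in I}m_j\bu_j\bigr)+O(D_I)$ with $\mathbf c(t)=-\sum_{k\in J}m_k(\bx_k-\bar\bx_I)/|\bx_k-\bar\bx_I|^2$ uniformly bounded on $[0,T_c]$; (b) pass to self-similar variables $\xi_j=\bu_j/\sqrt{D_I}$, $d\tau=dt/D_I$, in which the rescaled configuration (which automatically satisfies $\sum_{j<l\in I}|\xi_j-\xi_l|^2\equiv1$) obeys, up to $O(\sqrt{D_I})$, the autonomous $M$-vortex flow while $\sqrt{D_I}$ evolves by $\tfrac{d}{dt}\sqrt{D_I}=2M\,\mathbf c\cdot\bigl(\sum_j m_j\xi_j\bigr)+O(\sqrt{D_I})$; and (c) exploit the fact that for an isolated $M$-vortex system with $M_1=0$ the quantity $\sum_{j<l\in I}d_{jl}^2$ is \emph{conserved} to conclude that no orbit of the $M$-vortex flow lets the right side of the $\sqrt{D_I}$-equation integrate to $-\sqrt{D_I(t_0)}$ as $t\to T_c$, so $D_I$ never reaches $0$. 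I expect step~(c) — turning the quasi-conservation of $\sum_{j<l\in I}d_{jl}^2$ under the externally perturbed flow into a quantitative no-collapse statement — to be the main obstacle of the whole theorem.
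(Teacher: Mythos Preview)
Your treatment of (i)--(iii), of the case $M=N$ in (iv), and of the case $M_1>0$ with $M<N$ in (iv) is correct and matches the paper's (your argument for the existence of a collision cluster in (i) is in fact more careful than the paper's, which simply asserts it). For (iv) with $2\le M<N$ the paper argues slightly differently in form: rather than a pointwise differential inequality, it picks $T_2$ close to $T_c$, lets $T_3\in[T_2,T_c]$ be the argmax of $D_I$, writes $D_I(T_3)=-\int_{T_3}^{T_c}\dot D_I\,dt$, discards the internal contribution $-4MM_1(T_c-T_3)\le 0$ (this is the only place $M_1\ge 0$ is used, so no split into $M_1>0$ versus $M_1=0$ is ever made), and bounds the cross contribution.

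The crux is the paper's bound on that cross term: it asserts $\frac{d_{jl}}{d_{jk}}+\frac{d_{jl}}{d_{lk}}\le\frac{4D_I(T_3)}{d_2}$, which after integration gives $D_I(T_3)\le C(T_c-T_3)\,D_I(T_3)<D_I(T_3)$ for $T_c-T_2$ small. But this is precisely where your own analysis bites: since one only has $d_{jl}\le\sqrt{D_I(T_3)}$ while $d_{jk}\ge c_0>0$, the honest estimate is $d_{jl}/d_{jk}=O(\sqrt{D_I(T_3)})$, not $O(D_I(T_3))$. With the correct $\sqrt{D_I}$ bound the integration trick yields only $\sqrt{D_I(T_3)}\le C(T_c-T_3)$, which is not a contradiction (it is consistent with collapse at rate $D_I\sim (T_c-t)^2$). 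So your diagnosis of the subcase $M_1=0$, $M<N$ as the genuinely delicate one is correct; the paper's printed argument does not in fact close it either, and your self-similar blow-up outline---with step~(c) as the acknowledged obstacle---is a reasonable route, though it remains to be carried out.
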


\begin{proof} (i) Combining \eqref{H12} and \eqref{fi11}, we get
\begin{equation} \label{bjl297}
0\le \sum_{1\leq j<l\leq
N}|\bx_j(t)-\bx_l(t)|^2=4NM_0t+H_1^0,\qquad t\ge0.
\end{equation}
If $M_0<0$, when $t\to T_a=-\frac{H_1^0}{4NM_0}$, we have
$4NM_0t+H_1^0\to 0$. Thus finite time collision happens at $t=T_{\rm max}\le T_a<+\infty$.

(ii) If $M_0=0$, combining \eqref{H12} and \eqref{fi11}, we get
\be
0\le |\bx_j(t)|^2\le \sum_{j=1}^N|\bx_j(t)|^2\equiv H_2^0=\sum_{j=1}^N|\bx_j^0|^2, \qquad t\ge0, \nonumber
\ee
which immediately implies \eqref{bxjb1}.

(iii) If $ M_0>0$ and $T_{\rm max}=+\infty$, then there exists no finite time collision cluster among the $N$ vortices. The proof can be proceeded
similarly as the last part in Theorem \ref{NNC} and details are omitted here for brevity.


(iv) When $M=N$, for any given $\bX^0\in {\mathbb R}_*^{2\times N}$,  we get $H_1^0>0$. Noting \eqref{bjl297} and $M_0=M_1\ge0$, we have
\[\sum_{1\leq j<l\leq
N}|\bx_j(t)-\bx_l(t)|^2=4NM_0t+H_1^0\ge H_1^0>0, \qquad 0\le t\le
T_{\rm max},
\]
which immediately implies that the $N$ vortices cannot be a
collision cluster when $t\in [0,T_{\rm max}]$ for any given $\bX^0$.

When $2\le M<N$ and $N\ge3$, without loss of generality,
we assume $I=\{1,\ldots,M\}$ and denote $J=\{M+1,\ldots,N\}$.
Thus $M_1=\sum_{1\le j< l\le M}m_jm_l\ge0$.
We will proceed the proof by the method of
contradiction. Assume that the $M$ vortices $\bx_1,\ldots,\bx_M$ collide
at $\bx^0\in {\mathbb R}^2$ when $t\to T_c$ satisfying $0<T_c\le T_{\rm max}$, i.e.  $\bx_j(t)\to \bx^0$ when $t\to T_{c}^-$ for $1\le j\le M$
and $|\bx_j(t)-\bx^0|>0$ when $t\to T_{c}^-$ for $M+1\le j\le N$.
Denote $d_2:=\displaystyle\min_{j\in
J}\min_{0\le t \le T_c}|\mathbf x_j(t)-\mathbf x_0|^2$ and we have
$d_2>0$.
Since $\displaystyle\lim_{t\rightarrow T_{c}}D_{I}(t)=0$,
 there exists $0<T_1<T_{c}$, such that $D_{I}(t)<\frac{d_2}{2}$ and
 $d_{I,J}(t)>\frac{d_2}{2}$ for
 $t\in[T_1,T_{c})$.  Choose
$T_2\in[T_1,T_{c})$, such that
$$
0<T_{c}-T_2<\frac{d_2}{8M(M-1)(N-M)}.
$$
Since $D_{I}(t)$ is a continuous function, there exists $T_3\in[T_2,
T_{c}]$, such that $
D_{I}(T_3)=\max_{t\in[T_2,T_{c}]}D_{I}(t)>0$. Similar to \eqref{da}, we have
\bea
D_{I}(T_3)&=&D_{I}(T_3)-D_{I}(T_c)=
-\int_{T_3}^{T_{c}}\frac{d}{dt}D_I(t)dt \nn\\
&=&-4\int_{T_3}^{T_{c}}\sum_{1\leq j<l \leq M}(\mathbf x_j(t)-\mathbf
x_l(t))\cdot \sum_{k=M+1}^N m_k\left[m_j\frac{\mathbf
x_j(t)-\mathbf x_k(t)}{d_{jk}^2(t)}-m_l\frac{\mathbf x_l(t)-\mathbf
x_k(t)}{d_{lk}^2(t)}\right]dt\nn\\
&&-4MM_1(T_c-T_3)\nn\\
&\le&4\int_{T_3}^{T_{c}}\sum_{1\leq j<l \leq M}\sum_{k=M+1}^N
\left(\frac{d_{jl}(t)}{d_{jk}(t)}+
\frac{d_{jl}(t)}{d_{lk}(t)}\right)dt
\le16\int_{T_3}^{T_{c}}\sum_{1\leq j<l \leq
M}\sum_{k=M+1}^N\frac{D_{I}(T_3)}{d_2}dt\nn\\
&=&\frac{8M(M-1)(N-M)(T_c-T_{3})}{d_2}D_{I}(T_3)\nn\\
&\le&\frac{8M(M-1)(N-M)(T_c-T_{2})}{d_2}D_{I}(T_3) <D_{I}(T_3).\nonumber
\eea
This is a contradiction and thus the set
of vortices $\{\bx_j(t) \ |\ j\in I\}$ cannot be a collision cluster
among the $N$ vortices  for $0\le t\le T_{\rm max}$.
\end{proof}

\begin{proposition}\label{total}
If the $N$ vortices be a collision cluster at
$0<T_{\rm max}<+\infty$ under a given initial data
$\bX^0\in {\mathbb R}_*^{2\times N}$,
then we have
\be\label{M0clu56}
M_0<0, \qquad H_1^0=N H_2^0, \qquad H_3^0=(N-2)H_2^0.
\ee
\end{proposition}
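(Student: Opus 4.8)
The plan is to obtain the three relations of \eqref{M0clu56} by evaluating the non-autonomous first integrals $H_1,H_2,H_3$ of Lemma \ref{lem1} at $t=T_{\rm max}$, once the collision point of the cluster is identified. Since all $N$ vortices form the cluster (so $I=\{1,\ldots,N\}$ and $J=\emptyset$ in the definition of a collision cluster), there is a fixed $\bx^0\in\mathbb{R}^2$ with $\bx_j(t)\to\bx^0$ as $t\to T_{\rm max}^-$ for every $1\le j\le N$. First I would use that the mass center is conserved, $\bar\bx(t)\equiv\bar\bx^0$ on $[0,T_{\rm max})$ by \eqref{mccon1}: letting $t\to T_{\rm max}^-$ and using $\bx_j(t)\to\bx^0$ for all $j$ gives $\bx^0=\bar\bx^0$, i.e.\ the cluster collides exactly at the conserved mass center. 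I will normalize so that this mass center is the origin (legitimate here by the translational invariance of Lemma \ref{invar12}(i)); then $\bx_j(t)\to\mathbf{0}$ for all $j$ as $t\to T_{\rm max}^-$. (Without this centering, the computation below instead gives $H_1^0=N(H_2^0-N|\bar\bx^0|^2)$ and $H_3^0=(N-2)(H_2^0-N|\bar\bx^0|^2)$, consistent with the identity $2(N-1)H_2^0=H_1^0+H_3^0$ behind \eqref{H267}.)

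Next, the maps $\bX\mapsto\sum_{1\le j<l\le N}|\bx_j-\bx_l|^2$, $\bX\mapsto\sum_{j=1}^N|\bx_j|^2$ and $\bX\mapsto\sum_{1\le j<l\le N}|\bx_j+\bx_l|^2$ are continuous, and by the collision-cluster hypothesis each $\bx_j(\cdot)$ extends continuously to $t=T_{\rm max}$ with value $\mathbf{0}$; hence all three sums tend to $0$ as $t\to T_{\rm max}^-$. Since Lemma \ref{lem1} gives $H_1(\bX(t),t)\equiv H_1^0$, $H_2(\bX(t),t)\equiv H_2^0$ and $H_3(\bX(t),t)\equiv H_3^0$ on $[0,T_{\rm max})$, I would let $t\to T_{\rm max}^-$ in the definitions \eqref{H12}--\eqref{H13} to get
\be
H_1^0=-4NM_0T_{\rm max},\qquad H_2^0=-4M_0T_{\rm max},\qquad H_3^0=-4(N-2)M_0T_{\rm max}.\nn
\ee
Eliminating $M_0T_{\rm max}$ between the first two identities gives $H_1^0=NH_2^0$, and between the last two gives $H_3^0=(N-2)H_2^0$. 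Finally, since $\bX^0\in\mathbb{R}_*^{2\times N}$ at least two initial centers are distinct, so $H_1^0=\sum_{1\le j<l\le N}|\bx_j^0-\bx_l^0|^2>0$; with $H_1^0=-4NM_0T_{\rm max}$ and $0<T_{\rm max}<+\infty$ this forces $M_0<0$. (Alternatively, $M_0<0$ follows because Theorem \ref{NNC} excludes a finite-time collision when all winding numbers coincide, while Theorem \ref{fi}(iv) with $M=N$ and $M_1=M_0$ excludes it when $M_0\ge0$.)

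I expect no genuine analytic difficulty here: once the collision point is pinned to the mass center, the three relations drop out of the conservation of $H_1,H_2,H_3$ by a one-line limit. The only points needing care are the (routine) interchange of the limit $t\to T_{\rm max}^-$ with the continuous quadratic functions, justified because the $\bx_j$ extend continuously up to $T_{\rm max}$, and the centering convention $\bar\bx^0=\mathbf{0}$, without which \eqref{M0clu56} must be read in the mass-center-relative form noted above.
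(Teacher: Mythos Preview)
Your proposal is correct and follows essentially the same route as the paper: identify the collision point as the conserved mass center $\bar\bx^0$, then let $t\to T_{\rm max}^-$ in the conserved quantities $H_1,H_2,H_3$ of Lemma~\ref{lem1} to obtain $H_1^0=-4NM_0T_{\rm max}$, $H_2^0=-4M_0T_{\rm max}$, $H_3^0=-4(N-2)M_0T_{\rm max}$, and combine. You are in fact more careful than the paper on one point: the paper writes the $H_2$ and $H_3$ limits as if $\sum_j|\bx_j(T_{\rm max})|^2$ and $\sum_{j<l}|\bx_j(T_{\rm max})+\bx_l(T_{\rm max})|^2$ vanish, which tacitly assumes $\bar\bx^0=\mathbf{0}$; your explicit centering via Lemma~\ref{invar12}(i), together with your parenthetical remark on the uncentered form, makes this hypothesis visible.
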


\begin{proof}
Due to the conservation of mass center and $\bX^0\in
{\mathbb R}_*^{2\times N}$,
we get
\be\label{xj9765}
\bar{\bx}(t)\equiv \bar\bx^0 \Longrightarrow \lim_{t\to T_{\rm max}^-} \mathbf x_j(t)=\bx_j(\Tmax)=\bar\bx^0, \qquad 1\le j\le N.
\ee
Plugging \eqref{xj9765} into \eqref{H12} and \eqref{fi11}, we get
\be\label{H1m76}
H_1(\bX(T_{\rm max}),T_{\rm max})=\sum_{1\le j<l\le N}|\bx_j(\Tmax)-\bx_l(\Tmax)|^2-4NM_0T_{\rm max}=-4NM_0T_{\rm max}=H_1^0.
\ee
Similarly, we have
\be\label{H2376}
H_2(\bX(T_{\rm max}),T_{\rm max})=-4M_0T_{\rm max}=H_2^0,\quad
H_3(\bX(T_{\rm max}),T_{\rm max})=-4(N-2)M_0T_{\rm max}=H_3^0.
\ee
Combining \eqref{H1m76} and \eqref{H2376}, we obtain \eqref{M0clu56}.
\end{proof}

\begin{proposition}\label{equil}
If the ODEs \eqref{GLE} admits an equilibrium solution, then
$N\ge4 $ is a square of an integer, i.e. $N=(N^+-N^-)^2$ and
\be\label{Np12}
1\le N^+=\frac{1}{2}\left(N\pm \sqrt{N}\right)<N,\qquad
 1\le N^-=N-N^+<N.
 \ee
\end{proposition}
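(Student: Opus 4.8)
The plan is to exploit the non-autonomous first integral $H_1$ (equivalently $H_2$) from Lemma~\ref{lem1}: an equilibrium solution is constant in time, so the explicitly $t$-dependent term appearing in $H_1$ must vanish identically, which forces $M_0=0$. Once $M_0=0$ is established, the statement follows from the combinatorial identity~\eqref{mjml} together with elementary arithmetic.

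In detail, suppose $\bX^*=(\bx_1^*,\ldots,\bx_N^*)\in{\mathbb R}_*^{2\times N}$ is an equilibrium of \eqref{GLE}, so that $\bX(t)\equiv\bX^*$ is the solution of \eqref{GLE} with initial data $\bX^0=\bX^*$. By Lemma~\ref{lem1} we have $H_1(\bX(t),t)\equiv H_1^0=\sum_{1\le j<l\le N}|\bx_j^*-\bx_l^*|^2$; inserting $\bX(t)\equiv\bX^*$ into the definition \eqref{H12} of $H_1$ gives $-4NM_0\,t\equiv0$ for all $t\ge0$, hence $M_0=0$ since $N\ge2$. (One may instead note directly that the quantity evaluated in \eqref{G11}, which equals $4NM_0$ along any solution, must be zero when $\dot{\bX}\equiv0$.) Plugging $M_0=0$ into \eqref{mjml}, i.e. $M_0=\tfrac12\big((N^+-N^-)^2-N\big)=0$, yields $N=(N^+-N^-)^2$; in particular $N$ is a perfect square, and since $N\ge2$ the smallest admissible value is $N=4$, so $N\ge4$.

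It remains to record \eqref{Np12}. From $N^++N^-=N$ and $N^+-N^-=\pm\sqrt N$ (note that $\sqrt N=|N^+-N^-|$ is a nonnegative integer) we get $N^\pm=\tfrac12\big(N\pm\sqrt N\big)$ and the complementary $N^\mp=N-N^\pm=\tfrac12\big(N\mp\sqrt N\big)$; these are integers because $N$ and $\sqrt N$ have the same parity. For the bounds, $N\ge4$ gives $2\le\sqrt N<N$, so the larger count satisfies $\tfrac12(N+\sqrt N)<N$ while the smaller satisfies $\tfrac12(N-\sqrt N)\ge1$ (equivalently $\sqrt N(\sqrt N-1)\ge2$, which holds since $\sqrt N\ge2$); hence $1\le N^+<N$ and $1\le N^-<N$. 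There is essentially no obstacle beyond spotting the first step; the one thing to keep straight is the logical direction — this is only a necessary condition for the existence of an equilibrium, and in particular it excludes the case of all vortices having the same winding number (where $M_0>0$), consistent with Theorem~\ref{NNC}.
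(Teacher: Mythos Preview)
Your proof is correct and follows essentially the same approach as the paper: use the non-autonomous first integral $H_1$ to force $M_0=0$ at an equilibrium, then read off $N=(N^+-N^-)^2$ from \eqref{mjml} and solve for $N^\pm$. Your write-up is in fact more careful about verifying the bounds in \eqref{Np12} than the paper's own proof.
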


\begin{proof} Assume $\bX(t)\equiv \bX^0\in
{\mathbb R}_*^{2\times N}$ be
an equilibrium solution of (\ref{GLE}), noting \eqref{H12} and \eqref{fi11}, we get
\begin{equation}\label{32}
H_1(\bX(t),t)=H_1^0-4NM_0t \equiv H_1^0, \qquad t\ge0.
\end{equation}
Thus $M_0=0$. Noting \eqref{mjml}, we have
\be\label{N123}
4\le N=(N^{+}-N^{-})^2=(2N^+-N)^2=(2N^--N)^2.
\ee
Thus $N\ge4 $ is a square of an integer and we obtain \eqref{Np12}
by solving \eqref{N123}.
\end{proof}

\begin{remark}
When $N=4$, an equilibrium solution of \eqref{GLE} was constructed
in \cite{Zhang1,Zhang2} by taking $m_4=-1$, $\bx_4^0=(0,0)^T$ and
$m_1=m_2=m_3=1$, $\bx_j^0$ located in the vertices of a right triangle
centered at the origin. Here we want to remark
that any equilibrium solution of \eqref{GLE} is dynamically unstable.
\end{remark}

\section{Interaction patterns of a cluster with $3$ quantized vortices}
\setcounter{equation}{0}
\setcounter{figure}{0}

 In this section, we assume $N=3$ in (\ref{GLE}) and (\ref{GLEi}).

\subsection{Structural/obital stability in the case with the same winding number}
Assume that $m_1=m_2=m_3$ and by Theorem
\ref{NNC}, we know the ODEs (\ref{GLE}) with (\ref{GLEi})
is globally well-posed, i.e. $T_{\rm max}=+\infty$.

\begin{lemma}\label{invariants}
If the initial data $\bX^0\in {\mathbb R}_*^{2\times 3}$ in (\ref{GLEi}) with
$N=3$ is collinear, then one vortex moves to the mass center
$\bar\bx^0$  and the other two vortices repel with each other and move outwards to far field along the line when $t\to+\infty$.
\end{lemma}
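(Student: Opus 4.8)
The plan is to reduce the $N=3$ collinear problem to a scalar ODE system and then analyze its long-time behavior explicitly. By Lemma~\ref{lmcolin}, since $\bX^0$ is collinear, the solution stays collinear for all $t\ge0$, so I can write $\bx_j(t)=\bx^0+a_j(t){\bf e}$ with $a_1(t)<a_2(t)<a_3(t)$ (after relabeling), and the $a_j$ satisfy the scalar system \eqref{GLEaj} with $m_1=m_2=m_3=m_0$; note $m_0^2=1$ so the winding numbers drop out. First I would record the two natural conserved quantities inherited from Lemma~\ref{lem1}: the mass center gives $a_1(t)+a_2(t)+a_3(t)\equiv 3\bar a$ (a constant, which I may take to be $0$ after translating $\bx^0$), and $H_1$ gives $\sum_{j<l}(a_j-a_l)^2\equiv H_1^0+12t$ since here $N=3$, $M_0=3$, so $H_1(\bX,t)=-36t+\sum_{j<l}d_{jl}^2$. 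By Theorem~\ref{NNC}, $T_{\rm max}=+\infty$ and the spread grows unboundedly.

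Next I would introduce the gap variables $p(t)=a_2(t)-a_1(t)>0$ and $q(t)=a_3(t)-a_2(t)>0$ and derive their ODEs from \eqref{GLEaj}. A direct computation gives $\dot p = \dot a_2-\dot a_1 = 2\big[\tfrac1p - \tfrac1q\big] - 2\big[-\tfrac1p - \tfrac1{p+q}\big] = \tfrac4p + \tfrac2{p+q} - \tfrac2q$ and similarly $\dot q = \tfrac4q + \tfrac2{p+q} - \tfrac2p$; in particular $\dot p - \dot q = \tfrac6p - \tfrac6q$ and $\dot p + \dot q = \tfrac2p + \tfrac2q + \tfrac4{p+q}>0$. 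The key structural observation is that the quantity $p-q$ is monotone toward $0$: if $p>q$ then $\dot p - \dot q<0$, and if $p<q$ then $\dot p-\dot q>0$, so $|p(t)-q(t)|$ is nonincreasing while $p+q\to\infty$ (the latter because $d_{\rm min}^2(t)$ is monotone increasing by Theorem~\ref{N234} with $N=3$, forcing both $p$ and $q$ to stay bounded below and their sum, which dominates $\sqrt{H_1^0+12t}$ up to a constant, to blow up). Hence $p(t)/q(t)\to 1$ and both $p(t),q(t)\to+\infty$, with $a_2(t)=\tfrac13(a_1+a_2+a_3)+\tfrac13(q-2p\cdot 0\ldots)$—more precisely $a_2 = \bar a + \tfrac13(q-p)$, which is bounded since $q-p$ is bounded.

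From this the conclusion follows: the middle vortex $\bx_2(t)=\bx^0+a_2(t){\bf e}$ has $a_2(t)=\bar a + \tfrac13(q(t)-p(t))$ bounded, and I would show it actually converges by arguing $q-p\to c$ for some constant $c$ (monotonicity of $q-p$ plus boundedness gives a limit; I would then show the limit of the spread dynamics forces $c=0$, using that if $q-p\to c\ne0$ then $\dot q-\dot p=\tfrac6p-\tfrac6q\sim \tfrac{6(p-q)}{p^2}$ which is integrable, consistent, so I instead pin down $c$ via the finer asymptotics $p,q\sim\sqrt{6t}$ coming from $\dot p+\dot q\sim \tfrac4{p}\sim$, i.e. $(p+q)^2\sim 24t$, combined with $q-p\to c$). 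In the limit $a_2(t)\to \bar a$, i.e. $\bx_2(t)\to\bar\bx^0$, the mass center; meanwhile $a_1(t)=\bar a-\tfrac13(2p+q)\to-\infty$ and $a_3(t)=\bar a+\tfrac13(p+2q)\to+\infty$, so the outer two vortices move to the far field in opposite directions along the line $S_{\bf e}(\bx^0)$, repelling each other since $d_{13}=p+q\to\infty$. The main obstacle I anticipate is the rigorous identification of the constant $c=\lim(q-p)$ and hence that $a_2(t)$ converges \emph{exactly} to $\bar\bx^0$ rather than merely staying bounded; this requires the matched second-order asymptotics of the gap system rather than just the monotonicity bounds, and care is needed because the $O(1/\sqrt t)$ corrections are borderline integrable. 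An alternative, cleaner route that sidesteps this is to rescale: set $P(t)=p(t)/\sqrt{t}$, $Q(t)=q(t)/\sqrt t$ and show $(P,Q)\to(\sqrt 6,\sqrt 6)$ as the unique attracting fixed point of the resulting autonomous-in-$\log t$ system, from which $q-p=\sqrt t(Q-P)=o(\sqrt t)$ and a further bootstrap gives boundedness and then convergence of $q-p$; I would present the monotonicity argument for the qualitative claim and invoke the rescaling for the sharp statement.
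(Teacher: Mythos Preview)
Your approach is essentially the paper's, rewritten in the gap variables $p=a_2-a_1$, $q=a_3-a_2$ instead of directly in $a_2$. With $\bar a=0$ one has $a_2=\tfrac13(p-q)$ (note the sign---you wrote $\tfrac13(q-p)$) and $(a_2-a_1)(a_3-a_2)=pq$, so your relation $\dot p-\dot q=6/p-6/q$, i.e.\ $(p-q)'=-6(p-q)/(pq)$, is exactly the paper's $\dot a_2=-6a_2/(pq)$; your monotonicity of $|p-q|$ is precisely the paper's monotonicity of $\rho_2:=a_2^2$.

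The obstacle you flag---showing $q-p\to0$ rather than merely bounded---does not require rescaling or matched asymptotics. From $(q-p)'=-6(q-p)/(pq)$ together with the $H_1$ identity $p^2+pq+q^2=\tfrac12 H_1^0+18t$ you get $pq\le \tfrac12 H_1^0+18t$, hence
\[
\frac{d}{dt}\log(q-p)^2=-\frac{12}{pq}\le -\frac{12}{\tfrac12 H_1^0+18t},
\]
and integrating gives $(q-p)^2\le (q(0)-p(0))^2\bigl(1+36t/H_1^0\bigr)^{-2/3}\to0$. This closes the argument cleanly; the proposed rescaling to $(P,Q)=(p/\sqrt t,q/\sqrt t)$ is unnecessary and would in any case only yield $q-p=o(\sqrt t)$, not convergence. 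The paper, incidentally, writes $\dot\rho_2=-12\rho_2/(pq)<0$ and then asserts $\lim_{t\to+\infty}\rho_2(t)=0$ without further comment, so your instinct that this step needs justification is sound---but the justification is the one-line integration above, not a scaling analysis.
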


\begin{proof} Since $\bX^0\in {\mathbb R}_*^{2\times 3}$ is collinear, there exist $\bx^0\in {\mathbb R}^2$ and a unit vector
${\bf e}\in {\mathbb R}^2$ such that
\be
\bx_j^0=\bx^0+a_j^0 {\bf e}, \qquad j=1,2,3.\nonumber
\ee
Without loss of generality, we assume that
\be
 a_1^0<a_2^0<a_3^0, \qquad a_1^0<0, \qquad a_3^0>0, \qquad a_1^0+a_2^0+a_3^0=0.\nonumber
\ee
Based on the results in Lemma \ref{lmcolin} and Theorem \ref{NNC},
we know that there exist $a_j(t)$ ($j=1,2,3$) such that
\be\label{ajt346}
\bx_j(t)=\bx^0+a_j(t) {\bf e}, \qquad j=1,2,3,
\ee
satisfying
\be\label{ajt347}
 a_1(t)<a_2(t)<a_3(t),  \qquad a_1(t)+a_2(t)+a_3(t)\equiv0, \qquad t\ge0.
\ee
Plugging \eqref{ajt346} into \eqref{GLE} with $N=3$ and $m_1=m_2=m_3$, noting \eqref{ajt347}, we get
\bea
\dot a_1(t)&=&-\frac{2}{a_2(t)-a_1(t)}-\frac{2}{a_3(t)-a_1(t)}=
\frac{6a_1(t)}{[a_2(t)-a_1(t)][a_3(t)-a_1(t)]}<0,\nn \\
\dot a_2(t)&=&\frac{2}{a_2(t)-a_1(t)}-\frac{2}{a_3(t)-a_2(t)}
=\frac{-6a_2(t)}{[a_2(t)-a_1(t)][a_3(t)-a_2(t)]},\qquad \qquad t>0,\nn \\
\dot a_3(t)&=&\frac{2}{a_3(t)-a_1(t)}+\frac{2}{a_3(t)-a_2(t)}
=\frac{6a_3(t)}{[a_3(t)-a_1(t)][a_3(t)-a_2(t)]}>0,\nn
\eea
with the initial data
\be\label{init3478}
a_j(0)=a_j^0, \qquad j=1,2,3.
\ee
Thus $a_1(t)$ is a monotonically decreasing function and
$a_3(t)$ is a monotonically increasing function for $t\ge0$.
Let $\rho_2(t)=a_2^2(t)\ge0$, then we have
\be
\dot \rho_2(t)=\frac{-12\rho_2(t)}{[a_2(t)-a_1(t)][a_3(t)-a_2(t)]}<0,\qquad t>0,\nonumber
\ee
which immediately implies that $\rho_2(t)$ is a monotonically decreasing function and $\lim_{t\to +\infty} \rho_2(t)=0$. Thus we have
\be
\lim_{t\to +\infty} a_2(t)=0 \Longrightarrow \lim_{t\to +\infty}
\bx_2(t)=\bar\bx^0=\frac{1}{3}\sum_{j=1}^3 \bx_j^0.\nonumber
\ee
Thus the vortex $\bx_2(t)$ moves towards $\bar\bx^0$ along the line $S_{\bf e}(\bar\bx^0)$.
Based on the results in Theorem \ref{NNC}, we know that at least two vortices
must move to infinity when $t\to+\infty$. Thus we have
\be
 a_1(t)\to -\infty, \qquad a_3(t)\to +\infty \qquad
\hbox{when} \qquad t\to +\infty. \nonumber
\ee
Thus the other two vortices $\bx_1(t)$ and $\bx_3(t)$ repel with each other and move outwards to far field along the line $S_{\bf e}(\bx^0)$ when $t\to+\infty$.
\end{proof}

\begin{theorem}\label{th2}
Assume the initial data $\bX^0\in {\mathbb R}_*^{2\times 3}$ in \eqref{GLEi} with $N=3$ is not collinear,
then there exists a unit vector
${\bf e}\in {\mathbb R}^2$ such that
\be\label{dSt231}
\lim_{t\rightarrow+\infty} d_S(t):=\inf_{\bX\in S_{\bf e}^3(\bar\bx^0)}
\|\bX(t)-\bX\|_2=0.
\ee
\end{theorem}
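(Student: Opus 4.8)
The plan is to show that the three-vortex configuration with equal winding numbers asymptotically converges (in shape, i.e. modulo the translation already pinned down by the conserved mass center) to a rotating equilateral-triangle configuration, which is exactly a point of $S_{\bf e}^3(\bar\bx^0)$ for a suitable unit vector ${\bf e}$. By translational invariance (Lemma \ref{invar12}(i)) and \eqref{mccon1} we may assume $\bar\bx^0={\bf 0}$, so $\bx_1(t)+\bx_2(t)+\bx_3(t)\equiv{\bf 0}$ for all $t\ge0$. First I would reduce the dynamics to the three mutual distances: set $D_{jl}(t)=|\bx_j(t)-\bx_l(t)|^2$ and use \eqref{Djlt1} with $N=3$, which gives a closed autonomous system for $(D_{12},D_{13},D_{23})$ since the scalar products $(\bx_j-\bx_l)\cdot(\bx_j-\bx_k)$ are themselves expressible through the three $D$'s (they are the ``$a^2+b^2-c^2$'' combinations of triangle side-lengths). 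Concretely, $(\bx_j-\bx_l)\cdot(\bx_j-\bx_k)=\tfrac12(D_{jl}+D_{jk}-D_{lk})$, so \eqref{Djlt1} becomes a rational ODE system on the open cone $\{D_{12},D_{13},D_{23}>0,\ \text{triangle inequality strict}\}$.

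Next I would exploit the first integrals and the Lyapunov structure. By Lemma \ref{lem1} with $m_1=m_2=m_3$ we have $M_0=3$ and $H_1(\bX(t),t)=\sum_{j<l}D_{jl}(t)=H_1^0+24t$, so the total ``size'' $\Sigma(t):=D_{12}(t)+D_{13}(t)+D_{23}(t)$ grows exactly linearly and in particular is known explicitly. This suggests rescaling: put $u_{jl}(t)=D_{jl}(t)/\Sigma(t)$ so that $u_{12}+u_{13}+u_{23}\equiv1$, and introduce a new time $\tau$ with $d\tau/dt=1/\Sigma(t)$ (so $\tau\to\infty$ as $t\to\infty$, since $\Sigma\sim24t$ gives $\tau\sim\tfrac{1}{24}\ln t$). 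The rescaled variables $(u_{12},u_{13},u_{23})$ live on the compact $2$-simplex and, I expect, satisfy an autonomous system whose only interior fixed point is the equilateral point $u_{12}=u_{13}=u_{23}=\tfrac13$ — this is where $W(\bX)$ restricted to fixed $\Sigma$ is stationary. The gradient-flow identity \eqref{GLEv}, namely $\dot\bX=-\nabla_\bX W$ with $W=-\ln(D_{12}D_{13}D_{23})$, is the key monotonicity tool: I would show that along the rescaled flow the quantity $D_{12}D_{13}D_{23}/\Sigma^3$ (equivalently $u_{12}u_{13}u_{23}$, which is maximized precisely at the equilateral point) is monotonically nondecreasing and strictly increasing away from that point, hence a strict Lyapunov function for the rescaled system. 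By LaSalle's invariance principle on the compact simplex, $(u_{12},u_{13},u_{23})(t)\to(\tfrac13,\tfrac13,\tfrac13)$, i.e. $D_{jl}(t)=\big(8t+o(t)\big)$ uniformly in $j,l$.

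Having the distances converge to the equilateral shape of size $\sqrt{8t}$, the final step is to upgrade this to the claimed $\|\bX(t)-\bX\|_2\to0$ convergence to the manifold $S_{\bf e}^3(\bar\bx^0)=\bigcup_{\theta_0}S_{\bf e}^3({\bf 0},\theta_0)$. Note that an equilateral triangle centered at the origin with circumradius $r$ is exactly a configuration in $S_{\bf e}^3({\bf 0},\theta_0)$ with $r=r(t)$; by Lemma \ref{plogon1}, $S_{\bf e}^3({\bf 0},\theta_0)$ with $r(t)=\sqrt{r_0^2+4t}$ is an exact invariant solution, and $\sum_{j<l}|r_0 Q(\cdot){\bf e}|^2$-type bookkeeping confirms the circumradius of the limiting triangle matches $\sqrt{(8t)/3}$ as it must. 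Because $S_{\bf e}^3(\bar\bx^0)$ is the union over all $\theta_0$, the infimum $d_S(t)$ in \eqref{dSt231} only measures how far the actual triangle's shape is from equilateral (the rotation angle $\theta_0$ is free to track the instantaneous orientation of $\bX(t)$), so $d_S(t)^2$ is controlled by a continuous function of $(u_{12}-\tfrac13,u_{13}-\tfrac13,u_{23}-\tfrac13)$ times $\Sigma(t)$. This is the delicate point: $\Sigma(t)\to\infty$, so I must show the shape error $\max_{jl}|u_{jl}-\tfrac13|$ decays faster than $1/\Sigma(t)\sim1/(24t)$, i.e. the convergence of $u$ to the equilateral fixed point is fast enough (at least $o(1/t)$, really exponential in $\tau$ which is only logarithmic in $t$ — so one needs the shape to converge at rate $t^{-\gamma}$ for some $\gamma>1$). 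The main obstacle is therefore a \emph{rate} estimate: I would linearize the rescaled autonomous system at the equilateral fixed point, compute the (two, since we are on the simplex) eigenvalues $-\lambda_1,-\lambda_2$ of its Jacobian, and check that $\min(\lambda_1,\lambda_2)$ is large enough that $e^{-\lambda\tau}=t^{-\lambda/24}$-type decay beats $t^{-1}$; if the bare linear rate is insufficient one instead estimates $D_{jl}(t)-D_{j'l'}(t)$ directly from \eqref{Djlt1} (the differences satisfy equations with a damping term proportional to $1/\Sigma\sim1/t$ plus source terms quadratic in the differences) and runs a Gronwall/bootstrap argument showing $|D_{jl}-D_{j'l'}|=O(1)$ or $o(\sqrt t\,)$, which already suffices for \eqref{dSt231}.
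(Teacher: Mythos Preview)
Your approach differs from the paper's. The paper does not rescale or build a Lyapunov function; it writes coupled ODEs for the side lengths $d_{jl}$ and the interior angles $\theta_1,\theta_2,\theta_3$ of the triangle, observes that under the ordering $d_{12}\le d_{13}\le d_{23}$ the largest angle $\theta_1$ is monotone decreasing and the smallest angle $\theta_3$ monotone increasing, concludes that all three angles tend to $\pi/3$, and then asserts that this ``immediately implies'' \eqref{dSt231}. Your route via the normalized squared distances $u_{jl}=D_{jl}/\Sigma$ with the product $u_{12}u_{13}u_{23}$ (equivalently $e^{-W}/\Sigma^3$) as a strict Lyapunov function and LaSalle on the simplex is more systematic and makes the mechanism transparent; it would also generalize more readily than the ad~hoc angle monotonicity.

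You have put your finger on exactly the point the paper's own proof glosses over: shape convergence (angles $\to\pi/3$, or $u_{jl}\to\tfrac13$) alone yields only $d_S(t)/\sqrt{\Sigma(t)}\to0$, and since $\Sigma\sim t$ one needs a quantitative rate to conclude $d_S\to0$. The paper supplies no rate in this proof; the relevant linear analysis (in a logarithmic time variable, with contraction at the equilateral fixed point) appears only in the subsequent orbital-stability theorem. Your plan to linearize the rescaled system, or alternatively to estimate $|D_{jl}-D_{j'l'}|$ directly (noting, as you do, that $d_S\sim|D_{jl}-D_{j'l'}|/\sqrt\Sigma$, so an $O(1)$ or even $o(\sqrt t\,)$ bound on the side-length-squared differences already suffices), is the honest way to close the argument. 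One minor arithmetic slip: with $N=3$ and $m_1=m_2=m_3$ you have $M_0=3$, hence $\Sigma(t)=H_1^0+4NM_0\,t=H_1^0+36t$ rather than $24t$; this only rescales your logarithmic time $\tau$ and does not affect the structure.
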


\begin{proof} Without loss of
generality, as shown in Fig. \ref{vortex3}a, we assume $\bar\bx^0={\bf 0}$ and $d_{12}^0:=d_{12}(0)\le d_{13}^0:=d_{13}(0)\le d_{23}^0:=d_{23}(0)$.
Thus $0<\theta_3^0:=\theta_3(0)\le \theta_2^0:=\theta_2(0)\le
\theta_1^0:=\theta_1(0)<\pi$ satisfying
$\theta_1^0+\theta_2^0+\theta_3^0=\pi$ (cf. Fig. \ref{vortex3}a).
From \eqref{GLE} with $N=3$, we get
\bea\label{d1231}
\dot{d}_{12}(t)&=&\frac{4}{d_{12}(t)}+\frac{2\cos(\theta_{1}(t))}{d_{13}(t)}
+\frac{2\cos(\theta_2(t))}{d_{23}(t)},\quad
\dot\theta_{3}(t)=B(t)\left[d_{13}^2(t)+d_{23}^2(t)-2d_{12}^2(t)\right],\\
\label{d1232}
\dot{d}_{13}(t)&=&\frac{4}{d_{13}(t)}+\frac{2\cos(\theta_{1}(t))}{d_{12}(t)}
+\frac{2\cos(\theta_3(t))}{d_{23}(t)},\quad
\dot\theta_{2}(t)=B(t)\left[d_{12}^2(t)+d_{23}^2(t)-2d_{13}^2(t)\right],\qquad\\
\label{d1233}
\dot{d}_{23}(t)&=&\frac{4}{d_{23}(t)}+\frac{2\cos(\theta_{2}(t))}{d_{12}(t)}
+\frac{2\cos(\theta_3(t))}{d_{13}(t)},\quad
\dot\theta_{1}(t)=B(t)\left[d_{12}^2(t)+d_{13}^2(t)-2d_{23}^2(t)\right],
\eea
where $B(t):=4A(t)/(d_{12}(t)d_{13}(t)d_{23}(t))^2$ with
$A(t)$ denoting the area of the triangle with vertices
$\bx_1(t)$, $\bx_2(t)$ and $\bx_3(t)$.
Denote
\[\rho_{12}(t)=d_{12}^2(t),\quad
\rho_{13}(t)=d_{13}^2(t),\quad \rho_{23}(t)=d_{23}^2(t),\qquad
t\ge0.\]
From \eqref{d1231}-\eqref{d1233} and noting the initial data, we get
$\frac{\pi}{3}\le \theta_1(t)<\pi$ and $0<\theta_3(t)\le \frac{\pi}{3}$ are monotonically decreasing and increasing
functions, respectively, and
\be
\rho_{12}(t)\le \rho_{23}(t),\quad 0<\theta_3(t)\le\frac{\pi}{3}\le \theta_1(t)<\pi,
 \quad t\ge0; \qquad  \lim_{t\to+\infty}\theta_3(t)=\lim_{t\to+\infty}\theta_1(t)=\frac{\pi}{3}.
\ee
Combining this with $\theta_1(t)+\theta_2(t)+\theta_3(t)\equiv \pi$ for $t\ge0$, we get $\lim_{t\to+\infty}\theta_2(t)=\frac{\pi}{3}$, which immediately implies \eqref{dSt231}.
\end{proof}

\begin{figure}[bhtp]
\begin{minipage}[b]{0.4\textwidth}
\centering
\includegraphics[height=5cm,width=6.5cm,angle=0]{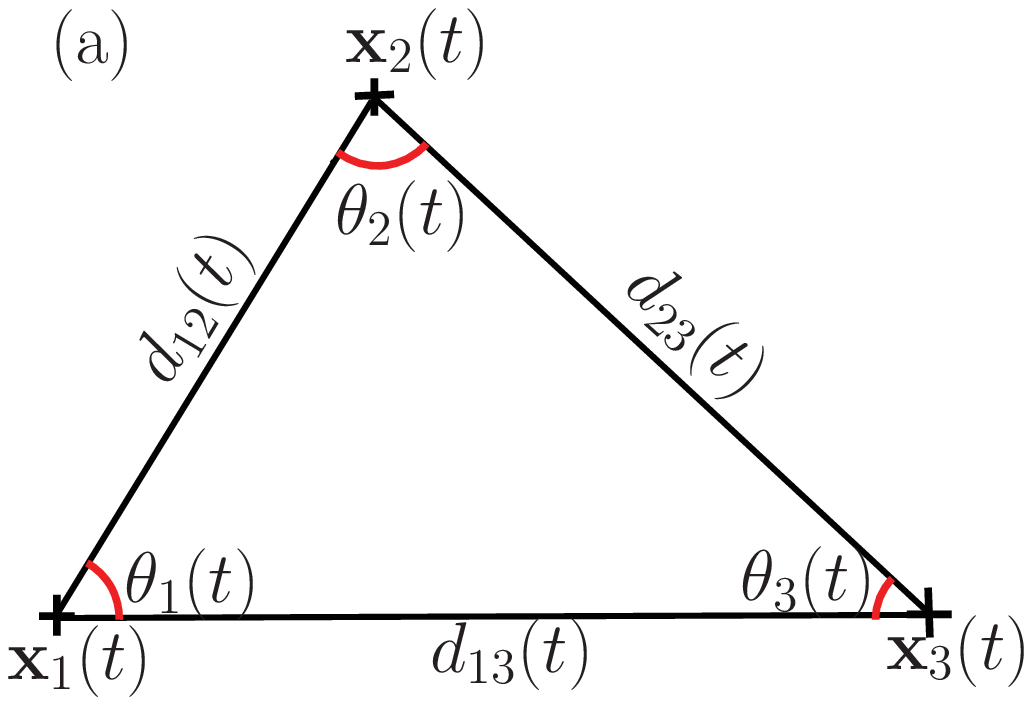}
\end{minipage} \quad \quad
\begin{minipage}[b]{0.4\textwidth}
\centering
\includegraphics[height=5.8cm,width=6.5cm,angle=0]{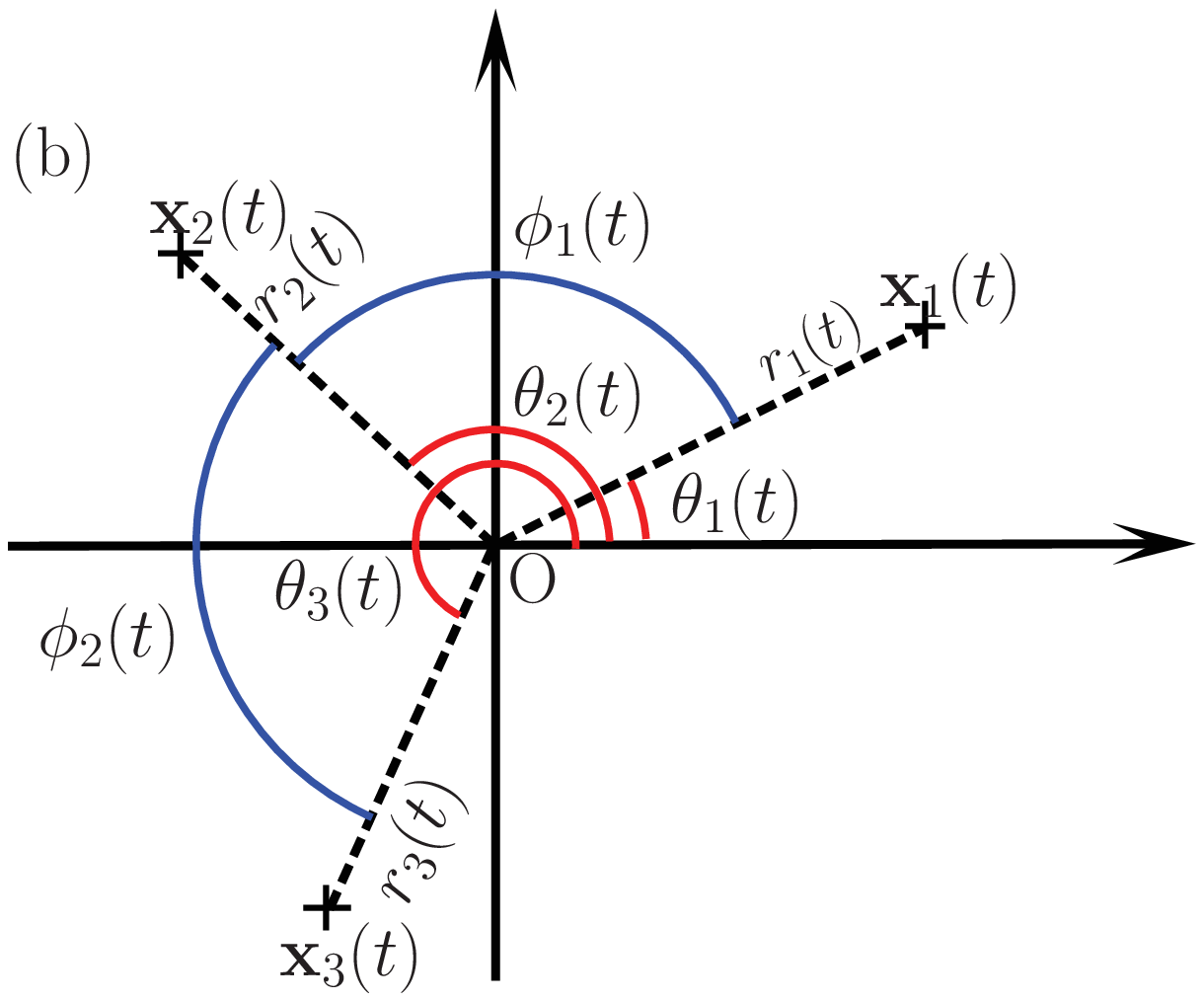}
\end{minipage}\\
\vfill
\begin{minipage}[l]{0.4\textwidth}
\centering
\includegraphics[height=5cm,width=6.5cm,angle=0]{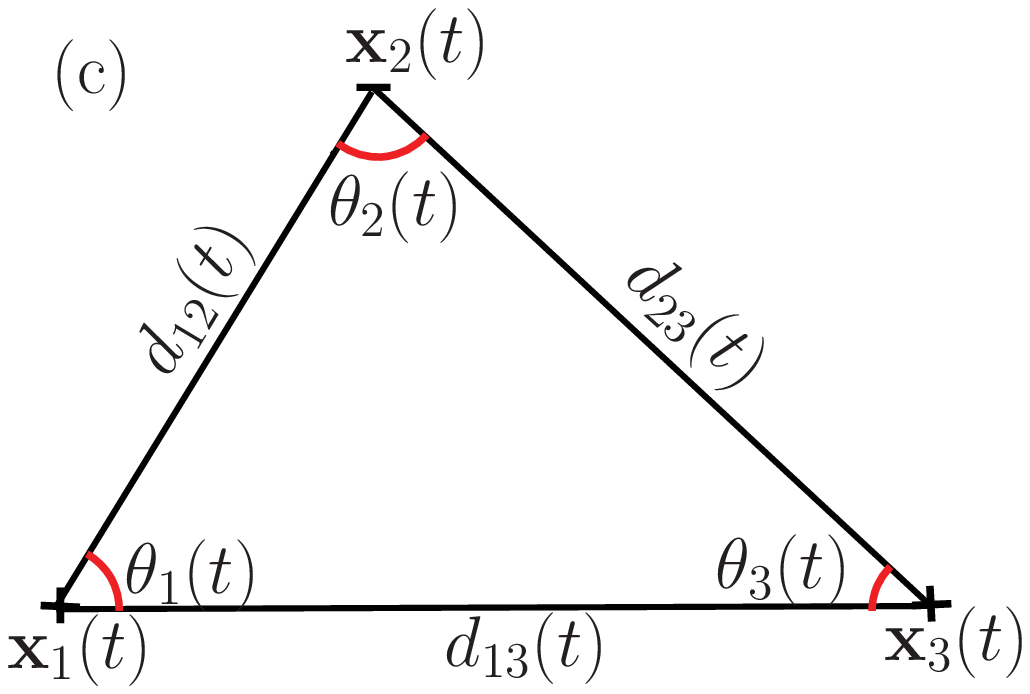}
\end{minipage}
\caption{Interaction of $3$ vortices
with the same winding number (a and b) and
opposite winding numbers (c).}\label{vortex3}
\end{figure}

For $\theta\in {\mathbb R}$ and
$\bX=(\bx_1,\ldots,\bx_N) \in S_{{\bf e}}^N({\bf 0},\theta_0)$, define
$Q(\theta)\bX:=(Q(\theta)\bx_1,\ldots,Q(\theta)\bx_N)$.

\begin{definition}
For the self-similar solution $\tilde \bX(t)=\sqrt{r_0^2+2(N-1)t}\,\tilde\bX^0$ with
$\tilde\bX^0=(\tilde\bx_1^0,\ldots,\tilde\bx_N^0)\in S_{{\bf e}}^N({\bf 0},\theta_0)$ and
$r_0=|\tilde \bx_1^0|$ of the ODEs \eqref{GLE} with $m_1=\ldots=m_N$,
if for any $\varepsilon>0$, there exists $\delta>0$ such that,
when the initial data $\bX^0$ in \eqref{GLEi} satisfies $\|\bX^0- \tilde\bX^0\|_2<\delta$, the solution $\bX(t)$ of the ODEs \eqref{GLE}
with \eqref{GLEi} satisfies
\be
\sup_{t\geq 0}\ \inf_{r>0,\ \theta\in[0,2\pi)}\ \left\|\bX(t)-\bar{\bx}^0 -r Q(\theta)\tilde \bX(t)\right\|<\varepsilon,\nonumber
\end{equation}
then  the self-similar solution $\tilde \bX(t)$ is called as
orbitally stable.
\end{definition}

\begin{theorem}
For any $\theta_0\in {\mathbb R}$ and $\tilde\bX^0=(\tilde\bx_1^0,\tilde\bx_2^0,\tilde\bx_3^0)\in S_{{\bf e}}^3({\bf 0},\theta_0)$, the solution $\tilde{\mathbf X}(t)=\sqrt{4t+r_0^2}\,
\tilde{\mathbf X}^0$ with $r_0=|\tilde \bx_1^0|$ of the ODEs \eqref{GLE}
with $N=3$ and $m_1=m_2=m_3$ is orbitally stable.
\end{theorem}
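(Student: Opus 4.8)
The plan is to translate the definition of orbital stability into a bound on a single scalar quantity, and then to control that quantity through a differential inequality powered by the linear growth of $\sum_j|\bx_j(t)|^2$ coming from Lemma~\ref{lem1}.

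\emph{Reduction.} By the translation invariance of Lemma~\ref{invar12}(i) together with conservation of the mass centre, I may assume $\bar\bx^0={\bf 0}$, so $\bar\bx(t)\equiv{\bf 0}$ and $\bX(t)$ always has mass centre at the origin; re-centring turns a $\delta$-perturbation into a $2\delta$-perturbation of $\tilde\bX^0$. Since $\tilde\bX(t)$ is a positive scalar multiple of the fixed configuration $\tilde\bX^0\in S_{\bf e}^3({\bf 0},\theta_0)$, the set $\{rQ(\theta)\tilde\bX(t):r>0,\ \theta\in[0,2\pi)\}$ is \emph{independent of $t$} and equals $S_{\bf e}^3({\bf 0})=:\mathcal E$, the manifold of all positively oriented equilateral triangles centred at the origin. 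Hence $\inf_{r>0,\theta}\|\bX(t)-\bar\bx^0-rQ(\theta)\tilde\bX(t)\|=\operatorname{dist}(\bX(t),\mathcal E)$, and orbital stability is precisely the assertion $\sup_{t\ge0}\operatorname{dist}(\bX(t),\mathcal E)<\varepsilon$.

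\emph{Symmetry-adapted coordinates.} Writing $z_j=x_j+iy_j$ and $\omega=e^{2\pi i/3}$, set $p=z_1+\omega z_2+\omega^2 z_3$ and $q=z_1+\omega^2 z_2+\omega z_3$. For a centred configuration one has $|p|^2+|q|^2=3\sum_j|z_j|^2$, the condition $p=0$ characterises $\mathcal E\cup\{{\bf 0}\}$, and an orthogonal-projection computation gives $\operatorname{dist}(\bX,\mathcal E)=|p(\bX)|/\sqrt3$. Using \eqref{GLEc} with $N=3$ and $m_1=m_2=m_3$, together with the elementary identity $\sum_{k=0}^2(1-\omega^k u)^{-1}=3(1-u^3)^{-1}$, I would derive the closed system
\[
\dot p=\frac{18\,\bar p^2}{\bar p^3-\bar q^3},\qquad \dot q=\frac{18\,\bar q^2}{\bar q^3-\bar p^3}.
\]
This is consistent with Lemma~\ref{lem1}: since $M_0=3$ for $N=3$ with equal winding numbers, $\frac{d}{dt}(|p|^2+|q|^2)=36=4NM_0$, so $|p(t)|^2+|q(t)|^2=3H_2^0+36t$. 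By Theorem~\ref{NNC} the solution exists for all $t\ge0$ and the triangle never degenerates, so the right-hand sides above are well defined.

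\emph{Core estimate.} From the $p$-equation, $\frac{d}{dt}|p|^2=36\,\operatorname{Re}\dfrac{\bar p^3}{\bar p^3-\bar q^3}$, hence whenever $|q|>|p|$,
\[
\frac{d}{dt}|p(t)|^2\le\frac{36\,|p(t)|^3}{|q(t)|^3-|p(t)|^3}.
\]
As long as $|p(t)|^2\le 3H_2^0/4$ (which forces $|q|>|p|$, since $|q|^2=3H_2^0+36t-|p|^2$), the denominator is $\ge c_0(H_2^0+t)^{3/2}$, so $\frac{d}{dt}|p|^2\le C|p|^3(H_2^0+t)^{-3/2}$ with $C$ absolute; integrating the resulting inequality for $\frac{d}{dt}|p|^{-1}$ and using $\int_0^\infty(H_2^0+s)^{-3/2}\,ds=2(H_2^0)^{-1/2}$ yields $|p(t)|^{-1}\ge|p(0)|^{-1}-C(H_2^0)^{-1/2}$. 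Thus if $|p(0)|\le(H_2^0)^{1/2}/(2C)$ then $|p(t)|\le 2|p(0)|$ for all $t$, and a standard continuation argument removes the a priori hypothesis $|p(t)|^2\le 3H_2^0/4$. Finally $p$ is linear with $p(\tilde\bX^0)=0$, so $|p(\bX^0)|\le\sqrt3\,\|\bX^0-\tilde\bX^0\|_2$, while $H_2^0$ depends continuously on $\bX^0$ with limiting value $\tilde H_2^0>0$; choosing $\delta$ small enough therefore forces $\sup_{t\ge0}\operatorname{dist}(\bX(t),\mathcal E)=\sup_{t\ge0}|p(\bX(t))|/\sqrt3<\varepsilon$, which is orbital stability.

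\emph{Main obstacle.} The delicate point is upgrading the pointwise differential inequality to a uniform-in-time bound: this succeeds only because $|q(t)|\sim\sqrt t\to\infty$, so the destabilising nonlinearity decays like $t^{-3/2}$ and is integrable in $t$. One must arrange the bootstrap so that near $t=0$ the positive quantity $H_2^0$ (rather than $t$) supplies the control, and must verify that all constants are uniform over the $\delta$-ball of perturbations — which holds because $H_2^0$ stays bounded away from $0$. Verifying the clean $(p,q)$-system and the identity $\operatorname{dist}(\bX,\mathcal E)=|p|/\sqrt3$ are the remaining ingredients requiring care, though both are short once the right coordinates are in place.
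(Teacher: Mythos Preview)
Your argument is correct and takes a genuinely different route from the paper. The paper works in polar coordinates, reduces the dynamics to an ODE for the two relative angles $\Phi=(\phi_1,\phi_2)=(\theta_2-\theta_1,\theta_3-\theta_2)$, performs the logarithmic time change $s=\tfrac14\ln((12t+H_3^0)/H_3^0)$ to obtain an autonomous system with equilibrium $(2\pi/3,2\pi/3)$, and then proves exponential decay of $\Psi=\Phi-(2\pi/3,2\pi/3)$ via a Taylor expansion and Gronwall; the orbital distance is finally controlled through explicit trigonometric quantities $D_{12},D_{13},D_{23},P$ built from $\phi_1,\phi_2$. You instead diagonalise the $\mathbb Z_3$ symmetry by passing to the discrete-Fourier variables $p=z_1+\omega z_2+\omega^2 z_3$, $q=z_1+\omega^2 z_2+\omega z_3$, in which the equilateral manifold is exactly $\{p=0\}$, the orbital distance is $|p|/\sqrt3$, and the equations close to $\dot p=18\bar p^{\,2}/(\bar p^{\,3}-\bar q^{\,3})$; a Riccati-type inequality for $|p|$, combined with the linear growth $|q|^2=3H_2^0+36t-|p|^2$ supplied by Lemma~\ref{lem1}, then gives the uniform bound $|p(t)|\le 2|p(0)|$ directly.

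What each approach buys: yours is shorter and more conceptual, replacing the paper's page of trigonometric identities and auxiliary functions by a single partial-fractions identity, and it makes the stabilising mechanism (the $t^{-3/2}$ decay of the nonlinearity, integrable in $t$) completely transparent. The paper's approach, on the other hand, yields a quantitative decay rate $\|\Psi(s)\|\lesssim e^{-2s}$, equivalently $\operatorname{dist}(\bX(t),\mathcal E)=O(t^{-1/2})$, which is stronger than your bound $|p(t)|\le 2|p(0)|$; that extra decay is not required for orbital stability but connects naturally with Theorem~\ref{th2}. Both arguments ultimately rest on the same first integral from Lemma~\ref{lem1}.
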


\begin{proof} By using Lemma \ref{invar12}, without loss of generality,
we can assume that $\theta_0=0$, $r_0=1$ and $\bar{\bx}^0={\bf 0}$.
In addition, as shown in Fig. \ref{vortex3}b, we assume
\be\label{xjt478}
\bx_j(t)=r_j(t)(\cos(\theta_j(t)),\sin(\theta_j(t))^T, \qquad j=1,2,3,
\ee
satisfying $\theta_1^0:=\theta_1(0)<\theta_2^0:=\theta_2(0)<\theta_3^0:=\theta_3(0)<2\pi$ and $\|\bX^0-\tilde \bX^0\|_2\le \delta$ with $0<\delta\le \frac{1}{12}$ sufficiently small and to be
determined later. In fact, from
\bea
\|\bX^0-\tilde \bX^0\|_2^2&=&\sum_{j=1}^3(r_j(0)-1)^2+\sum_{j=1}^{3}4r_j(0)
\sin^2(\varphi_j^0),
\eea
with $\varphi_j^0=\frac{\theta_j^0}{2}-\frac{(j-1)\pi}{3}$ for
$j=1,2,3$,
we can get
\be\label{r1230}
|r_1(0)-1|+|r_2(0)-1|+|r_3(0)-1|\le 3\delta<\frac{1}{2}, \qquad |\theta_1^0|+\left|\theta_2^0-\frac{2\pi}{3}\right|
+\left|\theta_3^0-\frac{4\pi}{3}\right|\le 6\delta<\frac{\pi}{4}.
\ee
Plugging \eqref{xjt478} into \eqref{mccon1}, we get
\bea
&&r_1(t)\cos(\theta_1(t))+r_2(t)\cos(\theta_2(t))+r_3(t)
\cos(\theta_3(t))\equiv 0,\nn \\
&&r_1(t)\sin(\theta_1(t))+r_2(t)\sin(\theta_2(t))
+r_3(t)\sin(\theta_3(t))\equiv 0, \qquad t\ge0.\nonumber
\eea
Solving the above equations, we obtain
\bea\label{orbrr1}
r_2(t)&=&-r_1(t)\frac{\sin(\theta_3(t)-\theta_1(t))}
{\sin(\theta_3(t)-\theta_2(t))}=-r_1(t)\frac{\sin\left(\phi_1(t)+\phi_2(t)\right)}
{\sin\left(\phi_2(t)\right)},\\
\label{orbrr2}
r_3(t)&=&r_1(t)\frac{\sin(\theta_2(t)-\theta_1(t))}
{\sin(\theta_3(t)-\theta_2(t))}=r_1(t)\frac{\sin\left(\phi_1(t)\right)}
{\sin\left(\phi_2(t)\right)},\ \qquad t\ge0,
\eea
where (cf. Fig. \ref{vortex3}b)
\be \label{phi123}
\phi_1(t)=\theta_2(t)-\theta_1(t),\quad \phi_2(t)=\theta_3(t)-\theta_2(t),\qquad
t\ge0.
\ee
By Lemma \ref{lem1}, we have
\begin{equation}\label{orb2}
r_1^2(t)+r_2^2(t)+r_3^2(t)=12t+H_3^0,\qquad t\geq 0,
\end{equation}
with $H_3^0=r_1^2(0)+r_2^2(0)+r_3^2(0)$.
Substituting (\ref{orbrr1}) and (\ref{orbrr2}) into (\ref{orb2}), we can get
\begin{equation}\label{orb3}
r_1(t)=\frac{(12t+H_3^0)^{1/2}\sin\left(\phi_2(t)\right)}
{D^{1/2}(t)}, \quad D(t):=
\sin^2\left(\phi_1(t)\right)+\sin^2\left(\phi_2(t)\right)
+\sin^2\left(\phi_1(t)+\phi_2(t)\right).
\end{equation}
Plugging \eqref{xjt478} into \eqref{GLE} with $N=3$, noting \eqref{orbrr1}-\eqref{phi123} and \eqref{orb3},  we have
\be\label{Phit267}
\dot\Phi(t)=\frac{2}{12t+H_3^0}F(\phi_1(t), \phi_2(t))=
\frac{2}{12t+H_3^0}F(\Phi(t)),\quad t>0,
\ee
where $\Phi(t):=(\phi_1(t),\phi_2(t))^T$ and $F(\Phi)=(f_1(\Phi),f_2(\Phi))^T$ is defined as
\beas
f_1(\Phi)&=&\frac{\sin\left(\phi_1\right)}{\sin\left(\phi_2\right)
\sin\left(\phi_1+\phi_2\right)}\left[\frac{\sin^2\left(\phi_1+\phi_2\right)}{D_{13}(\Phi)}
+\frac{\sin^2\left(\phi_2\right)}{D_{23}(\Phi)}-\frac{\sin^2\left(\phi_2\right)+\sin^2\left(\phi_1+\phi_2\right)}{D_{12}(\Phi)}
\right],\\
f_2(\Phi)&=&\frac{\sin\left(\phi_2\right)}{\sin\left(\phi_1\right)
\sin\left(\phi_1+\phi_2\right)}\left[\frac{\sin^2\left(\phi_1+\phi_2\right)}{D_{13}(\Phi)}
+\frac{\sin^2\left(\phi_1\right)}{D_{12}(\Phi)}-\frac{\sin^2\left(\phi_1\right)+\sin^2\left(\phi_1+\phi_2\right)}{D_{23}(\Phi)}
\right],
\eeas
with
\beas D_{12}(\Phi)&=&\frac{1}{D(\Phi)}\left(\sin^2\left(\phi_2\right)+
\sin^2\left(\phi_1+\phi_2\right)
+2\sin\left(\phi_2\right)\sin\left(\phi_1+\phi_2\right)\cos\left(\phi_1\right)
\right),\\
D_{13}(\Phi)&=&\frac{1}{D(\Phi)}\left(\sin^2\left(\phi_1\right)+
\sin^2\left(\phi_2\right)
-2\sin\left(\phi_1\right)\sin\left(\phi_2\right)
\cos\left(\phi_1+\phi_2\right)\right),\\
D_{23}(\Phi)&=&\frac{1}{D(\Phi)}\left(\sin^2\left(\phi_1\right)+
\sin^2\left(\phi_1+\phi_2\right)
+2\sin\left(\phi_1\right)\sin\left(\phi_1+\phi_2\right)
\cos\left(\phi_2\right)\right),\\
P(\Phi)&=&\frac{1}{D(\Phi)}\left[\sin\left(\phi_2\right)-
\sin\left(\phi_1+\phi_2\right)\cos\left(\phi_1-\frac{2\pi}{3}\right)
+\sin\left(\phi_1\right)\cos\left(\phi_1+\phi_2-\frac{4\pi}{3}\right)\right]^2;
\eeas
and
\be\label{tha187}
\dot\theta_1(t)=\frac{2}{12t+H_3^0}g(\Phi(t)),\qquad t>0,
\ee
with
\be
g(\Phi)=g(\phi_1,\phi_2)=\frac{\sin\left(\phi_1\right)
\sin\left(\phi_1+\phi_2\right)\left(D_{13}(\Phi)-D_{12}(\Phi)\right)}
{\sin\left(\phi_2\right)
D_{12}(\Phi)D_{13}(\Phi)}.\nonumber
\ee

Let
\be\label{st98}
s=\frac{1}{4}\ln \left(\frac{12t+H_3^0}{H_3^0}\right),\qquad
\Psi(s)=\Phi(t)-(2\pi/3,2\pi/3)^T, \qquad s\ge0,
\ee
then \eqref{Phit267} can be re-written as
\be\label{orb4}
\dot\Psi(s)=F\left(\Psi(s)+(2\pi/3,2\pi/3)^T\right)=-2 \Psi(s)+G(\Psi(s)),
\qquad s>0,
\ee
where
\be
G(\Psi)=F\left(\Psi+(2\pi/3,2\pi/3)^T\right)+2\Psi.\nonumber
\ee
It is easy to verify that $\Psi(s)\equiv {\bf 0}$ is
an equilibrium solution of (\ref{orb4}). By the variation-of-constant formula, we have
\be\label{Psis69}
\Psi(s)=e^{-2s}\Psi(0)+\int_0^s e^{-2(s-\tau)}
G(\Psi(\tau))d\tau, \qquad s\ge0.
\ee

By using the Taylor expansion, there exist constants $K_j>0$ ($j=1,2,3$) and $0<\delta_1<1$  such that
\bea\label{bdd367}
&&\|G(\Psi)\|_2\leq \|\Psi\|_2,\quad
\|G(\Psi)\|_2\leq K_1\|\Psi\|_2^2, \quad
|g(\Phi)|=|g(\Psi+(2\pi/3,2\pi/3)^T)|\leq K_2\|\Psi\|_2,\qquad \\
\label{bdd3671}
&&\left|3-P(\Phi)\right|=\left|3-P(\Psi+(2\pi/3,2\pi/3)^T)\right|\leq K_3\|\Psi\|_2^2, \quad \hbox{when}\ \|\Psi\|_2<\delta_1.
\eea
For any $0<\delta_2\le \delta_1$, when $\|\Psi(0)\|_2\le \frac{\delta_2}{2}$
($\Leftrightarrow \|\Phi(0)-(2\pi/3,2\pi/3)^T\|_2\le \frac{\delta_2}{2}$) and
let $S>0$ such that $\|\Psi(s)\|_2\le \delta_2$ for $0\le s\le S$, noting \eqref{Psis69} and \eqref{bdd367}
and using the triangle inequality, we have
\be
\|\Psi(s)\|_2\leq e^{-2s}\|\Psi(0)\|_2+\int_0^s e^{-2(s-\tau)}\| \Psi(\tau)\|_2\,d\tau, \qquad  0\leq s\le S,\nonumber
\ee
which is equivalent to
\be
e^{2s}\|\Psi(s)\|_2=\|\Psi^0\|_2+\int_0^s e^{2\tau}\|\Psi(\tau)\|_2\,d\tau,
\qquad 0\leq s\le S. \nonumber
\ee
Using the  Gronwall's inequality, we get
\be\label{PsisS1}
\|\Psi(s)\|_2\leq \| \Psi(0)\|_2\,e^{-s}\le \| \Psi(0)\|_2\le \frac{\delta_2}{2}, \qquad 0\leq s\le S.
\ee
From \eqref{PsisS1} and using the standard extension theorem for ODEs,
we can obtain
\be\label{PsisS2}
\|\Psi(s)\|_2\leq \| \Psi(0)\|_2\,e^{-s}, \qquad 0\leq s<+\infty.
\ee
Combining \eqref{PsisS2} and \eqref{Psis69}, using the triangle inequality, we obtain
\bea
\|\Psi(s)\|_2&\leq&e^{-2s}\|\Psi(0)\|_2+e^{-2s}\int_0^s e^{2\tau}\| G(\Psi(\tau))\|_2\,d\tau
\leq e^{-2s}\|\Psi(0)\|_2+e^{-2s}\int_0^s e^{2\tau}K_1\|\Psi(\tau)\|_2^2\,d\tau\nn\\
&\leq&e^{-2s}\|\Psi(0)\|_2+e^{-2s}\int_0^s e^{2\tau}K_1\|\Psi(0)\|_2^2e^{-2\tau}\,d\tau
\leq\left[\|\Psi(0)\|_2+K_1\|\Psi(0)\|_2^2\right]e^{-2s}\nn\\
&\le&(1+K_1)\delta_2 e^{-2s}, \qquad 0\leq s<+\infty,\nonumber
\eea
which immediately implies
\begin{equation}
\|\Phi(t)-(2\pi/3,2\pi/3)^T\|_2<(1+K_1)\delta_2\sqrt{\frac{H_3^0}{12t+H_3^0}},
\qquad 0\leq t<+\infty.\nonumber
\end{equation}
Noting \eqref{st98} and \eqref{bdd367}, we have
\be
|g(\Phi)|=|g(\Psi+(\pi/3,\pi/3)^T)|\leq K_2\|\Psi\|_2\le K_2(1+K_1)\delta_2\sqrt{\frac{H_3^0}{12t+H_3^0}},\qquad 0\leq t<+\infty.\nonumber
\ee
This implies that the ODE \eqref{tha187} is globally solvable, and the solution can be written as
\be
\theta_1(t)=\theta_1(0)+\int_0^t\frac{3}{12s+H_3^0}g(\phi_1(s), \phi_2(s))\,ds,\qquad 0\leq t\le +\infty.\nonumber
\ee
Denote $\theta_1^\infty=\lim_{t\to\infty}\theta_1(t)$ and
$\theta(t)=\theta_1(t)-\theta_1^\infty$, then we have
\bea\label{bXt864}
\inf_{r>0} \|\bX(t)-r Q(\theta(t))\tilde \bX(t)\|_2&=&
\inf_{r>0} \left\{12t+H_3^0+3r^2-2r\,d(t)\right\}
=12t+H_3^0-\frac{1}{3}d^2(t)\nn\\
&=&\frac{12t+H_3^0}{3}\left(3-P(\Phi(t))\right), \quad t\ge0,
\eea
where
\beas
d(t):=r_1(t)+r_2(t)\cos(\phi_1(t)-2\pi/3)+r_3(t)\cos(\phi_1(t)+\phi_2(t)-4\pi/3).
\eeas
Noting \eqref{bdd3671}, we have
\be\label{Ph3t1}
\left|3-P(\Phi(t))\right|=\left|3-P(\Psi(t)+(2\pi/3,2\pi/3)^T)\right|\leq K_3\|\Psi(t)\|_2^2\le \frac{K_3(1+K_1)^2\delta_2H_3^0}{12t+H_3^0},\ \ t\ge0.
\ee
For any $\varepsilon>0$, taking $\delta_3=\frac{\varepsilon}{2K_3(1+K_1)^2H_3^0}$ and
$0<\delta=\min\left\{\frac{1}{12},\frac{\delta_1}{12},\delta_3\right\}$,
 when $\|\bX^0-\tilde\bX^0\|_2< \delta$, noting \eqref{Ph3t1} and \eqref{bXt864}, we get
\be
\sup_{t\geq 0}\ \inf_{r>0} \|\bX(t)-r Q(\theta(t))\tilde \bX(t)\|_2
\leq K_3(1+K_1)^2H_3^0\delta< \varepsilon,\label{fin}
\ee
which completes the proof by taking $\delta_2=\delta$ in the above proof.
\end{proof}

\subsection{Collision patterns in the case with opposite winding numbers}
Without loss of generality, we assume $m_1=m_3=+1$ and $m_2=-1$ in
\eqref{GLE} with $N=3$. Then we have $M_0=\frac{1}{2}[(N^+-N^-)^2-N]=
\frac{1}{2}(1^2-3)=-1<0$, thus finite time collision must happen.

\begin{theorem} For any given initial data $\bX^0\in {\mathbb R}_*^{2\times 3}$ in \eqref{GLEi} with $N=3$,
we have

(i) If $|\mathbf
x_1^0-\mathbf x_2^0|=|\mathbf x_2^0-\mathbf x_3^0|$, then the three
vortices be a collision cluster and they will collide at $\bar\bx^0$ when
$t\to T_{\rm max}^-=\frac{H_1^0}{12}$ with $H_1^0=\sum_{1\leq j<l\leq 3}|\mathbf x_j^0-\mathbf
x_l^0|^2$.

(ii) If $|\mathbf x_1^0-\mathbf x_2^0|< |\mathbf
x_2^0-\mathbf x_3^0|$, then only $\mathbf x_1$ and $\mathbf x_2$ form
a collision cluster, and respectively, if $|\mathbf x_1^0-\mathbf x_2^0|> |\mathbf x_2^0-\mathbf x_3^0|$, then
only $\mathbf x_2$ and $\mathbf x_3$ form a collision cluster.
Moreover, the collision time $0<T_{\rm max}<\frac{H_1^0}{12}$.
\end{theorem}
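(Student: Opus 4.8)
Since $m_1=m_3=+1$ and $m_2=-1$ we have $M_0=-1<0$, so by Theorem~\ref{fi}(i) there is a finite collision time $0<\Tmax<+\infty$ with a collision cluster $I$ at $\Tmax$; and by Theorem~\ref{fi}(iv) (the collective winding number of $\{1,3\}$ being $m_1m_3=1\ge0$) one has $I\neq\{1,3\}$, so $I\in\{\{1,2\},\{2,3\},\{1,2,3\}\}$. The first integral $H_1$ of Lemma~\ref{lem1} reads, for $N=3$ and $M_0=-1$, $\sum_{1\le j<l\le3}d_{jl}^2(t)=H_1^0-12t$ on $[0,\Tmax)$; and by conservation of the mass centre, a collision of all three vortices can only occur at $\bar\bx^0$. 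Write $D_{jl}=d_{jl}^2$.

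The engine of the proof is this: differentiating $D_{23}-D_{12}$ along \eqref{GLE} with $N=3$ and rewriting the inner products $(\bx_i-\bx_j)\cdot(\bx_k-\bx_l)$ through the $d_{jl}$ via the law of cosines, one obtains
\[
\frac{d}{dt}\bigl(D_{23}-D_{12}\bigr)=4\bigl(D_{23}-D_{12}\bigr)\,C(t),\qquad C(t):=\frac{1}{D_{13}}-\frac{D_{12}+D_{23}-D_{13}}{2D_{12}D_{23}},
\]
with $C$ continuous on $[0,\Tmax)$. Hence $D_{23}(t)-D_{12}(t)=\bigl(D_{23}(0)-D_{12}(0)\bigr)\exp\!\bigl(4\int_0^tC(s)\,ds\bigr)$, so the sign of $d_{23}^2(t)-d_{12}^2(t)$ equals that of $d_{23}^2(0)-d_{12}^2(0)$ for all $t\in[0,\Tmax)$.

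\emph{Part (i).} If $d_{12}^0=d_{23}^0$ then $d_{12}(t)\equiv d_{23}(t)$; were $I=\{1,2\}$, $d_{12}\to0$ would force $d_{23}=d_{12}\to0$, so $\bx_3$ also reaches the collision point — contradiction; likewise $I\neq\{2,3\}$. Thus $I=\{1,2,3\}$, the vortices collide at $\bar\bx^0$, and $\sum_{j<l}d_{jl}^2(\Tmax^-)=0$ gives $\Tmax=H_1^0/12$. \emph{Part (ii), reduction.} If $d_{12}^0<d_{23}^0$ then $d_{12}(t)<d_{23}(t)$ on $[0,\Tmax)$, which excludes $I=\{2,3\}$ (else $d_{23}\to0$, hence $d_{12}\to0$, pulling $\bx_1$ into the cluster), so $I\in\{\{1,2\},\{1,2,3\}\}$ and $d_{12}(t)\to0$. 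Once $I=\{1,2\}$ is established (below), $\bx_1,\bx_2\to\bx^\ast$ while $\bx_3$ (solving a regular ODE near $\Tmax$) tends to some $\bx^\ast_3\ne\bx^\ast$, so $\ell:=\lim d_{13}(t)=\lim d_{23}(t)=|\bx^\ast-\bx^\ast_3|>0$, whence $\sum_{j<l}d_{jl}^2(\Tmax^-)=2\ell^2>0$ and $12\Tmax=H_1^0-2\ell^2$, i.e. $0<\Tmax<H_1^0/12$. The case $d_{12}^0>d_{23}^0$ follows by interchanging the labels $1\leftrightarrow3$.

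It remains to exclude $I=\{1,2,3\}$ when $d_{12}^0<d_{23}^0$, and \emph{this is the main obstacle}. Suppose $I=\{1,2,3\}$; then $\Tmax=H_1^0/12$, all $d_{jl}(t)\to0$, and $D_{13}(t)\le\sum_{j<l}D_{jl}(t)=12(\Tmax-t)$. The crucial geometric claim is that in such a total collision the negatively‑wound vortex is asymptotically pinched between the two positively‑wound ones, i.e. the interior angle $\theta_2(t)$ at $\bx_2$ satisfies $\theta_2(t)\to\pi$. Granting this, for $t$ near $\Tmax$ we have $\cos\theta_2(t)<0$, so $D_{12}+D_{23}-D_{13}=2d_{12}d_{23}\cos\theta_2<0$, whence $C(t)>\tfrac1{D_{13}(t)}\ge\tfrac1{12(\Tmax-t)}$ and $\int_{t_0}^{\Tmax}C(s)\,ds=+\infty$; by the engine above this forces $D_{23}(t)-D_{12}(t)\to+\infty$, which is impossible since $0<D_{23}-D_{12}\le\sum_{j<l}D_{jl}=12(\Tmax-t)\to0$. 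Hence $I=\{1,2\}$, and Part~(ii) follows. I would prove the claim $\theta_2\to\pi$ by a blow‑up analysis at $\Tmax$: rescale $\bX(t)-\bar\bx^0$ by the size $R(t)=\bigl(\sum_{j<l}d_{jl}^2(t)\bigr)^{1/2}=\sqrt{12(\Tmax-t)}$ (controlled exactly by $H_1$) and show the rescaled configuration converges, up to rotation, to a collapsing central configuration $\xi=(\xi_1,\xi_2,\xi_3)$ of \eqref{GLE}, i.e. $\sum_j\xi_j=0$ and $2m_j\sum_{k\neq j}m_k(\xi_j-\xi_k)/|\xi_j-\xi_k|^2=\mu\,\xi_j$ for some $\mu<0$; a direct case analysis of these relations — first under $|\xi_1-\xi_2|=|\xi_2-\xi_3|$, which forces collinearity, then for general collinear $\xi$ — shows the only solution, up to scaling, rotation and reflection, has $\xi_1=-\xi_3\neq0$, $\xi_2=0$, for which $\theta_2=\pi$. (Consistency check: along such a self‑similar collapse $D_{23}-D_{12}$ is $O(\Tmax-t)$, whereas the engine forces it to be $\propto(\Tmax-t)^{4A}$ with $A$ the bracket in $C(t)$ evaluated at $\xi$; matching exponents again pins down $\xi$.) Establishing that the rescaled solution actually has a limit — a Sundman‑type statement for \eqref{GLE} — is the technically heaviest ingredient; everything else reduces to the first integral $H_1$, Theorem~\ref{fi}(iv), and the sign‑preservation identity above.
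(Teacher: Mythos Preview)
Your factorisation identity $\tfrac{d}{dt}(D_{23}-D_{12})=4(D_{23}-D_{12})C(t)$ is correct and elegant, and it dispatches Part~(i) cleanly. The difficulty is Part~(ii): sign preservation of $D_{23}-D_{12}$ alone does not exclude a total collapse, because $\exp\!\bigl(4\int_0^tC\bigr)$ could go to zero, and your proposed remedy --- a Sundman-type blow-up showing the rescaled configuration converges to a central configuration --- is exactly the step you flag as unproved. That is a genuine gap: for this logarithmic gradient system no such result has been established in the paper (or invoked), the convergence of the rescaled flow is not automatic, and your classification of central configurations (``$|\xi_1-\xi_2|=|\xi_2-\xi_3|$ forces collinearity'') is asserted rather than argued.

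The paper avoids all of this by strengthening your sign-preservation to an actual monotonicity, working with $d_{23}-d_{12}$ rather than $D_{23}-D_{12}$. In the non-collinear case it writes the equations in the triangle variables $(d_{12},d_{13},d_{23},\theta_1,\theta_2,\theta_3)$ (cf.\ \eqref{d1251}--\eqref{d1253}) and computes
\[
\dot d_{23}-\dot d_{12}=\frac{\bigl[4-2\cos\theta_2\bigr](d_{23}-d_{12})}{d_{12}d_{23}}+\frac{2\bigl[\cos\theta_3-\cos\theta_1\bigr]}{d_{13}}.
\]
Since $4-2\cos\theta_2>0$ always, and $d_{12}<d_{23}$ forces $\theta_3<\theta_1$ hence $\cos\theta_3>\cos\theta_1$, both terms are positive; thus $d_{23}(t)\ge d_{12}(t)+(d_{23}^0-d_{12}^0)$ on $[0,\Tmax)$, so $d_{23}$ is bounded below by a fixed positive constant and the collision cluster cannot contain $\bx_3$. (The collinear case is treated separately by reducing to a scalar ODE system on the line.) This is both more elementary than your blow-up route and strictly stronger than what your identity yields; in fact, feeding the paper's bound $d_{23}\ge d_{23}^0-d_{12}^0>0$ back into your own framework already kills the total-collision scenario without any asymptotic analysis. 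Your approach would become complete if you could upgrade sign preservation to a uniform lower bound on $d_{23}$ by some direct estimate on $C(t)$, but as written the Sundman step is both unproven and unnecessary.
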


\begin{proof} 

(i) If the initial data $\bX^0\in {\mathbb R}_*^{2\times 3}$ in (\ref{GLEi}) with $N=3$ is collinear, i.e. there exists a unit vector
${\bf e}\in {\mathbb R}^2$  such that
\be
\bx_j^0=\bx^0_2+a_j^0 {\bf e}, \qquad j=1,2,3,\nonumber
\ee
satisfying $a_2^0=0$, $a_1^0< a_3^0$ and $0<|a_1^0|\le |a_3^0|$ (without loss of generality, otherwise we need only switch $\bx_1$ and $\bx_3$).
Based on the results in Lemma \ref{lmcolin} and Theorem \ref{NNC},
we know that there exist $a_j(t)$ ($j=1,2,3$) such that
\be
\bx_j(t)=\bx^0_2+a_j(t) {\bf e}, \qquad j=1,2,3,\nonumber
\ee
satisfying $a_1(t)+a_2(t)+a_3(t)\equiv a_1^0+a_2^0+a_3^0$ and $a_1(t)<a_3(t)$ for $0\le t<T_{\rm max}$ and
\bea
\dot a_1(t)&=&-\frac{2}{a_1(t)-a_2(t)}+\frac{2}{a_1(t)-a_3(t)}=
\frac{2[a_3(t)-a_2(t)]}{[a_2(t)-a_1(t)][a_3(t)-a_1(t)]}, \nonumber\\
\dot a_2(t)&=&-\frac{2}{a_2(t)-a_1(t)}-\frac{2}{a_2(t)-a_3(t)}
=\frac{2[3a_2(t)-(a_1^0+a_2^0+a_3^0)]}{[a_2(t)-a_1(t)][a_3(t)-a_2(t)]},\qquad t>0,\nonumber\\
\dot a_3(t)&=&\frac{2}{a_3(t)-a_1(t)}-\frac{2}{a_3(t)-a_2(t)}
=\frac{2[a_1(t)-a_2(t)]}{[a_3(t)-a_1(t)][a_3(t)-a_2(t)]},\nonumber
\eea
with the initial data \eqref{init3478}.

If $|a_1^0|=|a_3^0|$, i.e.
$a_3^0=-a_1^0>0$, then the above ODEs with \eqref{init3478} admits the unique
solution as
\be
a_1(t)=-\sqrt{(a_1^0)^2-2t}, \quad a_2(t)\equiv 0, \quad a_3(t)=\sqrt{(a_3^0)^2-2t},
\qquad 0\le t<T_{\rm max}:=\frac{1}{2}(a_3^0)^2,\nonumber
\ee
which immediately implies that the three
vortices be a collision cluster and they  collide at $\bar\bx^0=\bx_2^0$ when
$t\to T_{\rm max}^-=\frac{H_1^0}{12}$ with $H_1^0=\sum_{1\leq j<l\leq 3}|\mathbf x_j^0-\mathbf
x_l^0|^2=6(a_3^0)^2$.

If $|a_1^0|<|a_3^0|$, then $a_3^0>0$. If $0=a_2^0<a_1^0<a_3^0$,
then we can show that $a_2(t)<a_1(t)<a_3(t)$ for $0\le t< T_{\rm max}$
and $a_1(t)$, $a_2(t)$ and $a_3(t)$ are monotonically decreasing,
increasing and increasing functions over $t\in [0,T_{\rm max})$, respectively. Thus only $\mathbf x_1$ and $\mathbf x_2$ form
a collision cluster among the $3$ vortices.
On the other hand, if $a_1^0<0=a_2^0<a_3^0$, then we can show that $a_1(t)<a_2(t)<a_3(t)$ for $0\le t< T_{\rm max}$
and $a_1(t)$ and $a_2(t)$ are monotonically increasing
and decreasing functions over $t\in [0,T_{\rm max})$, respectively.
In addition we have $a_1(T_{\rm max})\le a_2(T_{\rm max})<0$ and
$a_3(T_{\rm max})=a_3^0+a_2^0+a_1^0-a_1(T_{\rm max})-a_2(T_{\rm max})>0$,
therefore, again only $\mathbf x_1$ and $\mathbf x_2$ form
a collision cluster among the $3$ vortices.

(ii) If the initial data $\bX^0\in {\mathbb R}_*^{2\times 3}$ in (\ref{GLEi}) with $N=3$ is not collinear, i.e. the initial locations of the $3$ vortices form a triangle.
Without loss of generality, as shown in Fig. \ref{vortex3}c, we assume $\bar\bx^0={\bf 0}$ and $d_{12}^0:=d_{12}(0)\le d_{23}^0:=d_{23}(0)$.
Thus $0<\theta_3^0:=\theta_3(0)\le \theta_1^0:=\theta_1(0)<\pi$ satisfying
$\theta_1^0+\theta_2^0+\theta_3^0=\pi$ (cf. Fig. \ref{vortex3}b).
From \eqref{GLE} with $N=3$, we get
\bea\label{d1251}
\dot{d}_{12}(t)&=&-\frac{4}{d_{12}(t)}+\frac{2\cos(\theta_{1}(t))}{d_{13}(t)}
-\frac{2\cos(\theta_2(t))}{d_{23}(t)},\
\dot\theta_{3}(t)=-B(t)\left[d_{13}^2(t)+d_{23}^2(t)\right]<0,\\
\label{d1252}
\dot{d}_{13}(t)&=&\frac{4}{d_{13}(t)}-\frac{2\cos(\theta_{1}(t))}{d_{12}(t)}
-\frac{2\cos(\theta_3(t))}{d_{23}(t)},\
\dot\theta_{2}(t)=B(t)\left[d_{12}^2(t)+d_{23}^2(t)+2d_{13}^2(t)\right]>0,\qquad\\
\label{d1253}
\dot{d}_{23}(t)&=&-\frac{4}{d_{23}(t)}-\frac{2\cos(\theta_{2}(t))}{d_{12}(t)}
+\frac{2\cos(\theta_3(t))}{d_{13}(t)},\
\dot\theta_{1}(t)=-B(t)\left[d_{12}^2(t)+d_{13}^2(t)\right]<0.
\eea

If $d_{12}^0=d_{23}^0$, then $0<\theta_3^0=\theta_1^0<\frac{\pi}{2}$ (cf.
Fig. \ref{vortex3}c),
this together with \eqref{d1251}-\eqref{d1253} yields
\be
d_{12}(t)=d_{23}(t), \ 0<\theta_3(t)=\theta_1(t)<\frac{\pi}{2}, \ 0\le t<T_{\rm max}; \quad \lim_{t\to T_{\rm max}^-}\theta_3(t)=
\lim_{t\to T_{\rm max}^-}\theta_1(t)=0, \nn
\ee
which immediately implies that the three vortices are forming
a collision cluster. By using Theorem \ref{fi},
we get $T_{\rm max}=H^0_1/12$.

If $0<d_{12}^0<d_{23}^0$, then $0<\theta_3^0<\theta_1^0<\frac{\pi}{2}$ (cf.
Fig. \ref{vortex3}b). From \eqref{d1251} and \eqref{d1253}, we have
\bea
\dot{d}_{23}(t)-\dot{d}_{12}(t)&=&\frac{[4-2\cos(\theta_{2}(t))](d_{23}(t)
-d_{12}(t))}{d_{12}(t)d_{23}(t)}
+\frac{2[\cos(\theta_{3}(t))-\cos
(\theta_{1}(t))]}{d_{13}(t)}>0,\qquad \nn \\
\dot\theta_{1}(t)-\dot\theta_{3}(t)&=&B(t)\left[d_{23}^2(t)-d_{12}^2(t)\right]>0,
\qquad t>0.\nn
\eea
Then we have
\be
d_{23}(t)\ge d_{12}(t)+d_{23}^0-d_{12}^0,\quad
\theta_{1}(t)\ge \theta_{3}(t)+\theta_{1}^0-\theta_{3}^0, \qquad
0\le t\le T_{\rm max}.\nn
\ee
This, together with that $\theta_1(t)$ and $\theta_3(t)$ are monotonically decreasing functions, $0<\theta_2(t)=\pi-\theta_1(t)-\theta_3(t)<\pi$
is a monotonically increasing functions
and finite time collision must happen (cf.
Fig. \ref{vortex3}c),
we get that $\lim_{t\to T_{\rm max}^-}d_{12}(t)=0$ and
$\lim_{t\to T_{\rm max}^-}\theta_3(t)=0$. Thus only the two vortices
$\mathbf x_1(t)$ and $\mathbf x_2(t)$ form a collision cluster among the
$3$ vortices. By using
Theorem \ref{fi}, we get the collision time $0<T_{\rm max}< \frac{H_1^0}{12}$.
\end{proof}

\section{Analytical solutions under special initial setups}
\setcounter{equation}{0}
\setcounter{figure}{0}
Let $0\le \theta_0<2\pi$ be a constant, $n\ge2$ be an integer,
$0<a_1<a_2$ be two constants, $C_1:=\frac{1}{2}\left(a_1^2+a_2^2\right)$, $C_2:=\frac{1}{2}\left(a_2^2-a_1^2\right)$, and $m_0=+1$ or $-1$.
Denote
\be
\theta_n^j=\frac{2(j-1)\pi}{n}+\theta_0, \qquad \alpha_n^j=\frac{2(j-1)\pi}{n}+\frac{\pi}{n}+\theta_0, \qquad 1\le j\le n. \nonumber
\ee

\subsection{For the interaction of two clusters}
Here we take $N=2n$ with $n\ge2$.

\bpro\label{lem4}
Taking $m_j=m_0$  for $1\le j\le N=2n$ and
the initial data $\bX^0$ in \eqref{GLEi} as
\be\label{patI11}
\mathbf x_j^0=a_1\left(\cos(\theta_n^j),\sin(\theta_n^j)\right)^T,\quad
\mathbf x_{n+j}^0=a_2\left(\cos(\theta_n^j),\sin(\theta_n^j)\right)^T,
\quad 1\le j\le n,
\ee
then the solution of the ODEs \eqref{GLE} with \eqref{patI11} can be given as
\be\label{solgle11}
\mathbf x_j(t)=\sqrt{\rho_1}(t)\left(\cos(\theta_n^j),\sin(\theta_n^j)\right)^T,\
\mathbf x_{n+j}(t)=\sqrt{\rho_2(t)}\left(\cos(\theta_n^j),\sin(\theta_n^j)\right)^T,
\  1\le j\le n,\   t\ge0,
\ee
where when $n=2$,
\be\label{rho12560}
\rho_1(t)=C_1+6t-
\sqrt{C_2^2+8C_1t+24t^2}, \quad
\rho_2(t)=C_1+6t+
\sqrt{C_2^2+8C_1t+24t^2}, \quad
t\ge0;
\ee
and when $n\ge3$,
\be\label{rho1256}
\rho_1(t)\sim \alpha_1 t,  \qquad \rho_2(t)\sim \beta_1 t, \qquad
t\gg 1,
\ee
with $\alpha_1$ and $\beta_1$ being two positive constants
satisfying
\be \label{apb11}
0<\alpha_1<\beta_1, \qquad \alpha_1+\beta_1=8n-4,\qquad
\beta_1-\alpha_1
=4n\frac{\beta_1^{n/2}+\alpha_1^{n/2}}{\beta_1^{n/2}-\alpha_1^{n/2}}.
\ee
Specifically, when $n\gg1$, we have
\be\label{apb1112}
\alpha_1\approx 2n-2, \qquad \beta_1\approx 6n-2.
\ee
\epro

\begin{proof} Noting the symmetry of the
ODEs \eqref{GLE} with the initial data (\ref{patI11}),
we can take the solution ansatz \eqref{solgle11}.
Substituting \eqref{solgle11} into \eqref{GLE} and \eqref{GLEi},
we obtain
\bea\label{r1}
\dot{\rho}_1(t)&=&4\sum_{j=2}^n \frac{\bnn(\theta_n^1)\cdot\left(\bnn(\theta_n^1)-
\bnn(\theta_n^j)\right)}{\left|\bnn(\theta_n^1)-\bnn(\theta_n^j)\right|^2}
+4\sum_{j=1}^n\frac{\bnn(\theta_n^1)\cdot\left(\rho_1(t)\bnn(\theta_n^1)-
\sqrt{\rho_1(t)\rho_2(t)}\bnn(\theta_n^j)\right)}{\left|\sqrt{\rho_1(t)}\bnn(\theta_n^1)-
\sqrt{\rho_2(t)}\bnn(\theta_n^j)\right|^2}\nn\\
&=&2n-2+4\sum_{j=1}^n\frac{\rho_1(t)-\sqrt{\rho_1(t)\rho_2(t)}
\cos(\theta_n^1-\theta_n^j)}{\rho_1(t)+\rho_2(t)-2\sqrt{\rho_1(t)\rho_2(t)}
\cos(\theta_n^1-\theta_n^j)},\qquad t>0,
\eea
\bea\label{r2}
\dot{\rho}_2(t)&=&4\sum_{j=1}^n\frac{\bnn(\theta_n^1)\cdot\left(\rho_2(t)\bnn(\theta_n^1)-
\sqrt{\rho_1(t)\rho_2(t)}\bnn(\theta_n^j)\right)}{\left|\sqrt{\rho_2(t)}\bnn(\theta_n^1)-
\sqrt{\rho_1(t)}\bnn(\theta_n^j)\right|^2}+4\sum_{j=2}^n \frac{\bnn(\theta_n^1)\cdot\left(\bnn(\theta_n^1)-
\bnn(\theta_n^j)\right)}{\left|\bnn(\theta_n^1)-\bnn(\theta_n^j)\right|^2}
\nn\\
&=&2n-2+4\sum_{l=1}^n\frac{\rho_2(t)-\sqrt{\rho_1(t)\rho_2(t)}
\cos(\theta_n^1-\theta_n^j)}{\rho_1(t)+\rho_2(t)-2\sqrt{\rho_1(t)\rho_2(t)}
\cos(\theta_n^1-\theta_n^j)},\qquad t>0,
\eea
where
\be\label{ntheta}
\bnn(\theta)=(\cos(\theta),\sin(\theta))^T, \qquad \theta\in {\mathbb R}.
\ee
Summing \eqref{r1} and \eqref{r2}, we have
\begin{eqnarray}\label{tp31}
\dot\rho_1(t)+\dot\rho_2(t)=8n-4,\qquad \qquad t>0,
\end{eqnarray}
Subtracting \eqref{r1} from \eqref{r2}, we get
\begin{eqnarray}\label{tp3101}
\dot\rho_2(t)-\dot \rho_1(t)=4n\frac{\rho_2^{n/2}(t)+\rho_1^{n/2}(t)}{\rho_2^{n/2}(t)-\rho_1^{n/2}(t)}=
4n+\frac{8n\rho_1^{n/2}(t)}{\rho_2^{n/2}(t)-\rho_1^{n/2}(t)}, \qquad t>0.
\end{eqnarray}
Here we use the equality
\[\sum_{j=1}^n\frac{x^2-1}{x^2+1-2x
\cos(\theta_n^1-\theta_n^j)}=n\frac{x^n+1}{x^n-1}, \qquad 1<x\in {\mathbb R}.
\]
Combining \eqref{tp31} and \eqref{tp3101}, we obtain
\be\label{rho678}
\dot\rho_1(t)=2n-2-\frac{4n\rho_1^{n/2}(t)}{\rho_2^{n/2}(t)-\rho_1^{n/2}(t)}, \quad \dot\rho_2(t)=6n-2+\frac{4n\rho_1^{n/2}(t)}{\rho_2^{n/2}(t)-\rho_1^{n/2}(t)}, \quad t\ge0,
\ee
with the initial data
\be\label{initrh120}
\rho_1(0)=\rho_1^0:=a_1^2<\rho_2(0)=\rho_2^0:=a_2^2.
\ee
When $n=2$, we can solve \eqref{rho678} with \eqref{initrh120} analytically
and obtain the solution  \eqref{rho12560} immediately.
When $n\ge3$, noting that all the vortices have
the same winding number, i.e. $T_{\rm max}=+\infty$ by using Theorem \ref{NNC}, we get $\rho_1(t)<\rho_2(t)$ for $t\ge0$ and thus
\be
\dot\rho_2(t)>0,\qquad \dot\rho_2(t)-\dot \rho_1(t)>0,\qquad t\ge0.\nonumber
\ee
Therefore, we conclude that $\rho_2(t)$ and
$\rho_2(t)-\rho_1(t)$  are monotonically increasing
functions when $t\ge0$ and $\displaystyle\lim_{t\rightarrow
+\infty}\rho_2(t)=+\infty$ by noting Theorem \ref{fi}.
From \eqref{rho678}, we can conclude that there exist two positive constants $0<\alpha_1<\beta_1$ such that \eqref{rho1256} is valid.
Plugging \eqref{rho1256} into \eqref{tp31}, we get
\eqref{apb11} immediately. When $n\gg1$, \eqref{apb11} yields
\be
 \alpha_1+\beta_1=8n-4,\qquad
\beta_1-\alpha_1
\approx 4n, \nonumber
\ee
which immediately implies \eqref{apb1112}. In addition,
Figure \ref{rho121} depicts the solution
$\rho_1(t)$ and $\rho_2(t)$ of \eqref{rho678} obtained numerically with $\rho_1(0)=1$ and $\rho_2(0)=4$ for different $n\ge2$.
\end{proof}

\begin{figure}[t!]
\centerline{\psfig{figure=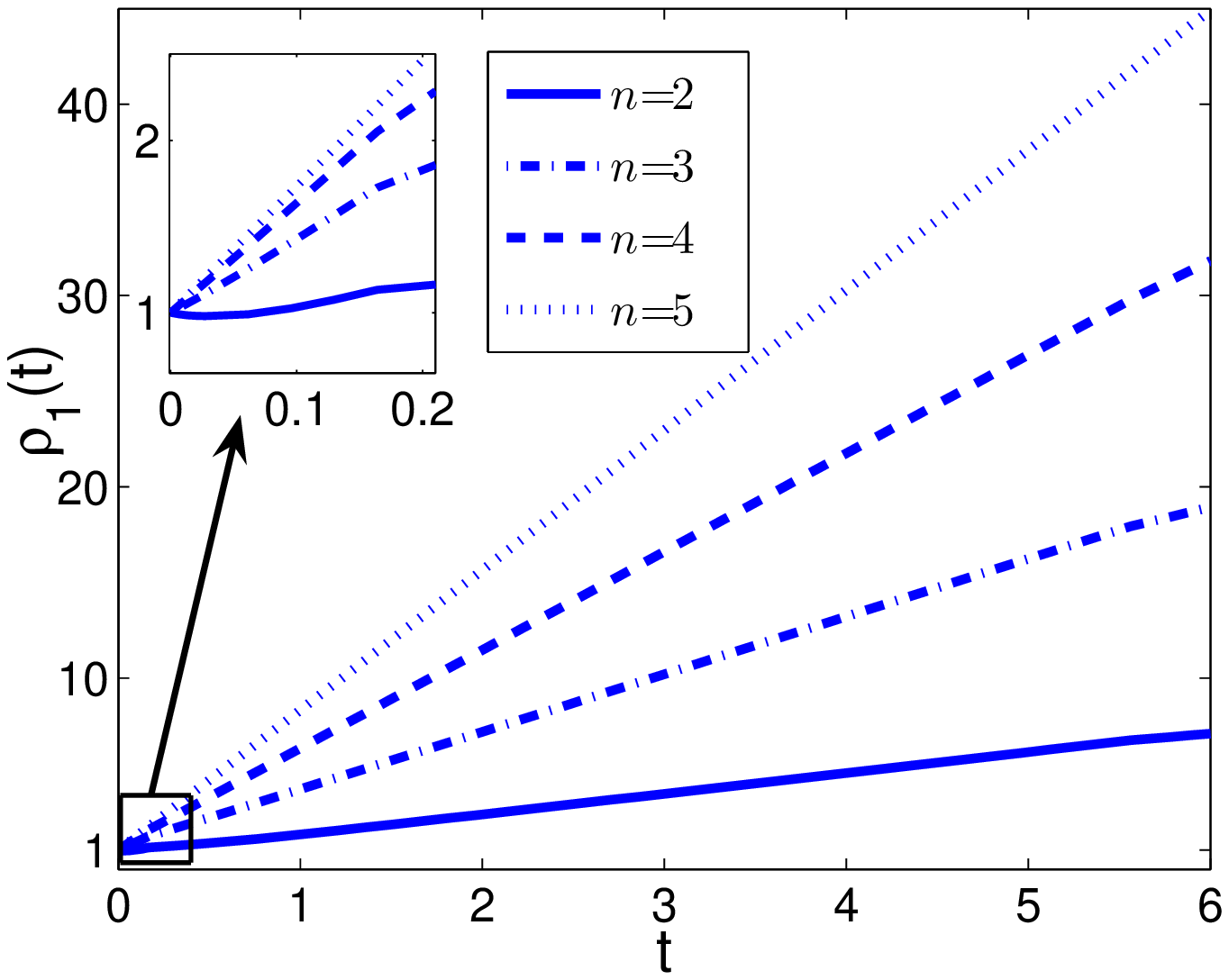,height=5cm,width=6.5cm,angle=0}
\psfig{figure=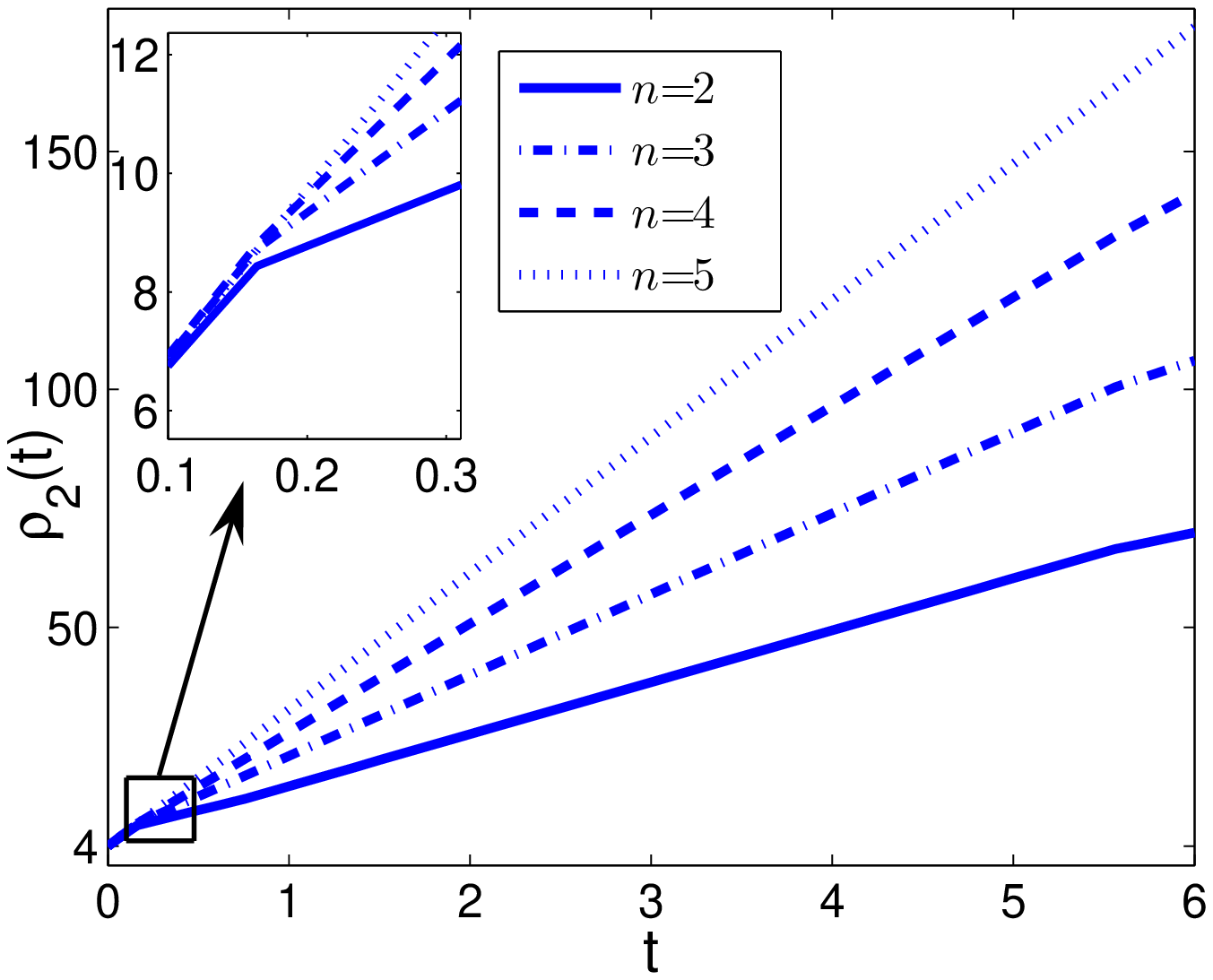,height=5cm,width=6.5cm,angle=0}}
\caption{Time evolution of $\rho_1(t)$ (left) and $\rho_2(t)$ (right)
of \eqref{rho678} with $\rho_1^0=1$ and $\rho_2^0=4$ for different $n\ge2$.}
\label{rho121}
\end{figure}

\begin{proposition}\label{lem7}
Taking $m_j=m_0$  for $1\le j\le N=2n$ and
the initial data $\bX^0$ in \eqref{GLEi} as
\be\label{patI12}
\mathbf x_j^0=a_1\left(\cos(\theta_n^j),\sin(\theta_n^j)\right)^T,\quad
\mathbf x_{n+j}^0=a_2\left(\cos(\alpha_n^j),\sin(\alpha_n^j)\right)^T,
\quad 1\le j\le n,
\ee
then the solution of the ODEs \eqref{GLE} with \eqref{patI12} can be given as
\be\label{solgle12}
\mathbf x_j(t)=\sqrt{\rho_1(t)}\left(\cos(\theta_n^j),\sin(\theta_n^j)\right)^T,\
\mathbf x_{n+j}(t)=\sqrt{\rho_2(t)}\left(\cos(\alpha_n^j),\sin(\alpha_n^j)\right)^T,
\   1\le j\le n,\   t\ge0,
\ee
where when $n=2$,
\be
\rho_1(t)=C_1+6t-C_2\left(
1+\frac{6t}{C_1}\right)^{2/3},\quad \rho_2(t)=C_1+6t+C_2\left(
1+\frac{6t}{C_1}\right)^{2/3}, \qquad t\ge0; \nonumber
\ee
and when $n\ge3$,
\be
\rho_1(t)\sim \alpha_2 t, \qquad \rho_2(t)\sim \beta_2 t, \qquad t\gg1, \nonumber
\ee
with $\alpha_2$ and $\beta_2$ being two positive constants satisfying
\begin{equation*}
0<\alpha_2< \beta_2, \quad
\alpha_2+\beta_2=8n-4,\quad
\beta_2-\alpha_2=4n\frac{\beta_2^{n/2}-\alpha_2^{n/2}}
{\beta_2^{n/2}+\alpha_2^{n/2}}.
\end{equation*}
Specifically,  when $n\gg1$, $\alpha_2\approx 2n-2$ and $\beta_2\approx 6n-2$.
\end{proposition}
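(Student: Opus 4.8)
The plan is to follow the proof of Proposition \ref{lem4} line for line, replacing the $n$-th roots of unity by the $n$-th roots of $-1$. First I would use symmetry: the configuration \eqref{patI12} is invariant under rotating every vortex by $2\pi/n$ about $\bx^0$ together with the cyclic relabeling $j\mapsto j+1$ inside each ring, and \eqref{GLE} commutes with this (cf.\ Lemma \ref{invar12}(iii) together with the obvious permutation symmetry of \eqref{GLE}), so the solution must keep the form \eqref{solgle12}. Plugging \eqref{solgle12} into \eqref{GLE}, the self-interaction of the $n$ equally spaced vortices on each ring contributes $2n-2$ to $\dot\rho_1$ and to $\dot\rho_2$, exactly as in Proposition \ref{lem4}. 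For the cross-cluster terms one needs $\sum_{j=1}^n\frac{x^2-1}{x^2+1-2x\cos(\theta_n^1-\alpha_n^j)}$; since $\theta_n^1-\alpha_n^j=-\frac{(2j-1)\pi}{n}$ and the numbers $e^{i(2j-1)\pi/n}$, $1\le j\le n$, are precisely the zeros of $z^n+1$, the partial-fraction computation that yields the identity in Proposition \ref{lem4} now yields $\sum_{j=1}^n\frac{x^2-1}{x^2+1-2x\cos(\theta_n^1-\alpha_n^j)}=n\frac{x^n-1}{x^n+1}$ for $x>1$---the only change is the sign in the denominator on the right. With $x=\sqrt{\rho_2/\rho_1}$ this gives $\dot\rho_1+\dot\rho_2=8n-4$ and $\dot\rho_2-\dot\rho_1=4n\frac{\rho_2^{n/2}-\rho_1^{n/2}}{\rho_2^{n/2}+\rho_1^{n/2}}$, hence the reduced system $\dot\rho_1=2n-2+\frac{4n\rho_1^{n/2}}{\rho_1^{n/2}+\rho_2^{n/2}}$, $\dot\rho_2=2n-2+\frac{4n\rho_2^{n/2}}{\rho_1^{n/2}+\rho_2^{n/2}}$ with $\rho_1(0)=a_1^2<\rho_2(0)=a_2^2$.

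For $n=2$ I would set $S=\rho_1+\rho_2$, $D=\rho_2-\rho_1$, so $\dot S=12$ and $\dot D=8D/S$; integrating gives $S=2C_1+12t$ and $D=2C_2(1+6t/C_1)^{2/3}$, and then $\rho_1=(S-D)/2$, $\rho_2=(S+D)/2$ are the stated closed forms. For $n\ge3$, Theorem \ref{NNC} gives $T_{\rm max}=+\infty$; since $\dot D>0$ whenever $D>0$ and $D(0)>0$, one has $0<\rho_1(t)<\rho_2(t)$ for all $t\ge0$, both $\rho_i$ increase, and from $0<\dot D<4n$ with $\rho_1+\rho_2=2C_1+(8n-4)t$ one gets $a_1^2+(2n-2)t\le\rho_1(t)\le\rho_2(t)\le 2C_1+(8n-4)t$. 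To obtain the exact rate I would pass to $\tau=\ln t$, $v_i(\tau)=\rho_i(t)/t$; the powers of $t$ cancel and one obtains the autonomous planar system $\frac{dv_i}{d\tau}=-v_i+2n-2+\frac{4nv_i^{n/2}}{v_1^{n/2}+v_2^{n/2}}$ on $\{0<v_1<v_2\}$, whose equilibria satisfy exactly $\alpha_2+\beta_2=8n-4$ and $\beta_2-\alpha_2=4n\frac{\beta_2^{n/2}-\alpha_2^{n/2}}{\beta_2^{n/2}+\alpha_2^{n/2}}$. Writing $r=\alpha_2/\beta_2$, an equilibrium with $0<\alpha_2<\beta_2$ is a zero in $(0,1)$ of $H(r)=(2n-1)\frac{1-r}{1+r}-n\frac{1-r^{n/2}}{1+r^{n/2}}$; since $H(0)=n-1>0$, $H(1)=0$ and $H'(1)=\frac{n^2-4n+2}{4}$, such a zero exists for $n\ge4$ (for $n=3$ only the symmetric equilibrium survives---see below), and the monotonicity of $D(t)$ and $S(t)$ then pins down the relevant equilibrium $(\alpha_2,\beta_2)$ and gives $(v_1,v_2)\to(\alpha_2,\beta_2)$, i.e.\ $\rho_1\sim\alpha_2 t$ and $\rho_2\sim\beta_2 t$. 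For $n\gg1$, $(\alpha_2/\beta_2)^{n/2}\to0$ forces $\beta_2-\alpha_2\approx4n$, which with $\alpha_2+\beta_2=8n-4$ gives $\alpha_2\approx2n-2$, $\beta_2\approx6n-2$.

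The symmetry reduction and the $n=2$ quadrature are routine; the main obstacle is the large-time analysis for $n\ge3$---proving that $\rho_i(t)/t$ genuinely converges and identifying the limit. The logarithmic time change turns this into a stability question for a planar system, governed by the transverse eigenvalue $\frac{n^2-4n+2}{2(2n-1)}$ at the symmetric equilibrium $v_1=v_2=4n-2$. One subtlety the statement glosses over: this eigenvalue is negative for $n=3$, so there is no nontrivial equilibrium and in fact $\alpha_2=\beta_2=4n-2=10$ (the displayed relations then holding trivially), exactly as for $n=2$; the strict separation $\alpha_2<\beta_2$ is genuine only for $n\ge4$. I would therefore state the inequality with $\le$, or restrict the strict version to $n\ge4$.
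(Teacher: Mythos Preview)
Your approach is exactly the ``analogue to Proposition~\ref{lem4}'' that the paper invokes (and then omits): the symmetry ansatz, the partial-fraction identity with the roots of $z^n+1$ in place of $z^n-1$, and the resulting reduced pair of ODEs for $\rho_1,\rho_2$ are all correct, and your $n=2$ quadrature via $S=\rho_1+\rho_2$, $D=\rho_2-\rho_1$ reproduces the stated closed form.

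Where you go beyond the paper is the $n\ge3$ asymptotics. The paper's model argument (in Proposition~\ref{lem4}) simply asserts that limits $\alpha,\beta$ exist and reads off the algebraic relations; your reduction via $\tau=\ln t$, $v_i=\rho_i/t$ to an autonomous planar system, with the linearization at the diagonal equilibrium $v_1=v_2=4n-2$ and transverse eigenvalue $\frac{n^2-4n+2}{2(2n-1)}$, is a genuine improvement in rigor and is correct. In particular, your observation about $n=3$ is right and is a point the paper misses: for $n=3$ the transverse eigenvalue is $-\tfrac{1}{10}<0$, the diagonal equilibrium is attracting, the function $H(r)=(2n-1)\frac{1-r}{1+r}-n\frac{1-r^{n/2}}{1+r^{n/2}}$ has no zero in $(0,1)$, and one gets $\rho_1(t)/t,\rho_2(t)/t\to 10$, i.e.\ $\alpha_2=\beta_2=4n-2$; the strict inequality $\alpha_2<\beta_2$ holds only for $n\ge4$. (The $n=2$ closed form already exhibits the same phenomenon, $\rho_i\sim 6t$.) Your suggested fix---write $\alpha_2\le\beta_2$, or restrict the strict inequality to $n\ge4$---is the appropriate correction to the statement.
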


\begin{proof}
The proof is analogue to that of Proposition \ref{lem4} and thus it is
omitted here for brevity.
\end{proof}

\begin{proposition}\label{th31}
Taking $m_j=m_0$ and $m_{n+j}=-m_0$ for $1\le j\le n$ and
the initial data $\bX^0$ in \eqref{GLEi} as
\eqref{patI12},
then the solution of the ODEs \eqref{GLE} with \eqref{patI12} can be given as
\eqref{solgle12},
where
\be
\rho_1(t)>0, \quad \rho_2(t)>0, \quad 0\le t<T_c:=\frac{1}{4}(a_1^2+a_2^2), \qquad \lim_{t\to T_c^-}\rho_1(t)=\lim_{t\to T_c^-}\rho_2(t)=0, \nonumber
\ee
which implies that the $N=2n$ vortices will be a (finite time) collision cluster.
\end{proposition}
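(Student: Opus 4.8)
The plan is a symmetry reduction of \eqref{GLE} to a planar system for $(\rho_1,\rho_2)$, combined with the first integral $H_2$ from Lemma~\ref{lem1} and the non-collision criterion Theorem~\ref{fi}(iv).

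First I would establish the ansatz \eqref{solgle12}. The configuration \eqref{patI12} is invariant, after relabeling the vortices, under the rotation $Q(2\pi/n)$ and under the reflection across the line through the origin in the direction $\bnn(\theta_0)$; since \eqref{GLE} is invariant under rotations (Lemma~\ref{invar12}) and reflections and each layer carries one winding number, uniqueness of solutions forces $\bx_j(t)=Q(\theta_n^j-\theta_0)\bx_1(t)$ and $\bx_{n+j}(t)=Q(\theta_n^j-\theta_0)\bx_{n+1}(t)$ with $\bx_1(t)$ constrained to the line in direction $\bnn(\theta_n^1)$ and $\bx_{n+1}(t)$ to the line in direction $\bnn(\alpha_n^1)$. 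Putting $\rho_1(t)=|\bx_1(t)|^2$, $\rho_2(t)=|\bx_{n+1}(t)|^2$, which are positive on $[0,\Tmax)$ because a zero of either would make a whole layer coincide (a collision), gives \eqref{solgle12}; substituting into \eqref{GLE} yields a closed autonomous system, $\dot\rho_1=2(n-1)-4\sum_{j=1}^n\frac{\rho_1-\sqrt{\rho_1\rho_2}\cos\psi_j}{\rho_1+\rho_2-2\sqrt{\rho_1\rho_2}\cos\psi_j}$ together with the analogous equation for $\rho_2$, where the $\psi_j$ are the inter-layer angle gaps.

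Next I would read off two structural facts on $[0,\Tmax)$. Since $N^+=N^-=n$, \eqref{mjml} gives $M_0=-n<0$, so $0<\Tmax<+\infty$ by Theorem~\ref{fi}(i); moreover $\bar\bx^0={\bf 0}$ and $H_2$ is a first integral, so $\sum_{j=1}^{2n}|\bx_j(t)|^2=H_2^0+4M_0t$, which under \eqref{solgle12} reads
\be
\rho_1(t)+\rho_2(t)=a_1^2+a_2^2-4t,\qquad 0\le t<\Tmax,\nonumber
\ee
and as the right-hand side must remain positive, $\Tmax\le T_c=\tfrac14(a_1^2+a_2^2)$. Subtracting the $\rho_1$-equation from the $\rho_2$-equation of the reduced system, and using that the inter-layer cosine multisets in the two equations coincide (so each paired difference reduces to $(\rho_2-\rho_1)/(\rho_1+\rho_2-2\sqrt{\rho_1\rho_2}\cos\psi_j)$), gives $\dot\rho_2-\dot\rho_1=-(\rho_2-\rho_1)\,g(t)$ with $g(t)=4\sum_j(\rho_1+\rho_2-2\sqrt{\rho_1\rho_2}\cos\psi_j)^{-1}>0$; hence $\rho_2(t)-\rho_1(t)=(a_2^2-a_1^2)\exp\!\big(-\!\int_0^t g\big)$ is positive and decreasing, so $\rho_2=\tfrac12[(\rho_1+\rho_2)+(\rho_2-\rho_1)]$ is decreasing and $\rho_1$ is bounded; consequently $\rho_1(t)$ and $\rho_2(t)$ converge to finite limits $\rho_1^*,\rho_2^*\ge0$ as $t\to\Tmax^-$.

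Finally I would pin down $\Tmax$ and the collision set, which is the step I expect to be the crux. If $\rho_1^*>0$ and $\rho_2^*>0$, then $\bX(t)$ converges to a non-degenerate configuration and the solution extends past $\Tmax$, contradicting maximality. If exactly one limit vanishes, say $\rho_1^*=0<\rho_2^*$, then the inner vortices all converge to $\bar\bx^0$ while the outer ones stay at mutual and common distances bounded below (using $\sqrt{\rho_2}-\sqrt{\rho_1}>0$), so $I=\{1,\dots,n\}$ would be a finite-time collision cluster among the $N$ vortices; but its collective winding number $M_1=\tfrac12\sum_{j\ne l\in I}m_jm_l=\tfrac12 n(n-1)\ge0$, contradicting Theorem~\ref{fi}(iv) (the case $\rho_2^*=0<\rho_1^*$ is ruled out likewise with $J=\{n+1,\dots,2n\}$). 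Hence $\rho_1^*=\rho_2^*=0$, so $a_1^2+a_2^2-4t=\rho_1(t)+\rho_2(t)\to0$ forces $\Tmax=T_c$ and $\lim_{t\to T_c^-}\rho_1(t)=\lim_{t\to T_c^-}\rho_2(t)=0$; therefore $\bx_j(t)\to\bar\bx^0={\bf 0}$ for every $j$, i.e.\ all $N=2n$ vortices form a finite-time collision cluster at $t=T_c$. The main obstacle is the trigonometric identity matching the two inter-layer sums (which produces the decay law for $\rho_2-\rho_1$), and then the three-way case analysis at $\Tmax$.
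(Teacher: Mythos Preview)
Your proof is correct and follows the same overall strategy as the paper: reduce by symmetry to a planar system in $(\rho_1,\rho_2)$, use the linear decay $\rho_1+\rho_2=a_1^2+a_2^2-4t$, and invoke $M_0=-n<0$ to force a finite-time collision. Two points are worth noting.

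First, your endgame is more complete than the paper's. After obtaining $\rho_1+\rho_2=-4t+a_1^2+a_2^2$, the paper simply asserts ``thus there exist $1\le j_0\le n$ and $1\le l_0\le n$ such that the vortex dipole $\mathbf x_{j_0}$ and $\mathbf x_{n+l_0}$ will collide at $t=T_c$,'' without explaining why a single-ring collapse (only $\rho_1\to 0$ or only $\rho_2\to 0$) cannot occur first. Your case analysis supplies exactly this missing step: if only one ring collapses, that ring alone would be a collision cluster with collective winding number $M_1=\binom{n}{2}\ge 0$, contradicting Theorem~\ref{fi}(iv). This is the right lemma to cite and makes the argument airtight.

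Second, your treatment of $\dot\rho_2-\dot\rho_1$ is more elementary. The paper evaluates the difference via the closed-form identity $\sum_{j=1}^n\frac{x^2-1}{x^2+1-2x\cos(\theta_n^1-\theta_n^j+\pi/n)}=n\frac{x^n-1}{x^n+1}$, obtaining $\dot\rho_2-\dot\rho_1=-4n\frac{\rho_2^{n/2}-\rho_1^{n/2}}{\rho_2^{n/2}+\rho_1^{n/2}}$ (which it then does not actually use). You bypass this entirely by observing that the two inter-layer cosine multisets coincide, giving $\dot\rho_2-\dot\rho_1=-(\rho_2-\rho_1)g(t)$ with $g>0$; this is all that is needed to get monotonicity of $\rho_2-\rho_1$ and hence convergence of $\rho_1,\rho_2$ at $T_{\max}$, which feeds your case analysis. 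So your route trades a nontrivial trigonometric identity for a softer sign argument plus Theorem~\ref{fi}(iv).
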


\begin{proof} Similar to the proof of
Proposition \ref{lem4}, noting the symmetry of the
ODEs \eqref{GLE} with the initial data (\ref{patI12}),
we can take the solution ansatz \eqref{solgle12}.
In addition, plugging \eqref{solgle12} into \eqref{GLE} and \eqref{GLEi},
we get
\bea\label{r113}
\dot{\rho}_1(t)&=&2n-2-4\sum_{l=1}^n\frac{\rho_1(t)-
\sqrt{\rho_1(t)\rho_2(t)}\cos(\theta_n^1-\alpha_n^{l})}
{\rho_1(t)+\rho_2(t)-2\sqrt{\rho_1(t)\rho_2(t)}\cos(\theta_n^1-\alpha_n^{l})},\\
\label{r213}
\dot{\rho}_2(t)&=&2n-2-4\sum_{l=1}^n
\frac{\rho_2(t)-\sqrt{\rho_1(t)\rho_2(t)}\cos(\alpha_n^{1}-\theta_n^l)}
{\rho_1(t)+\rho_2(t)-2\sqrt{\rho_1(t)\rho_2(t)}\cos(\alpha_n^{1}-\theta_n^l)},
\eea
with the initial data \eqref{initrh120}.


Summing \eqref{r113} and \eqref{r213}, we obtain
\be\label{rh12789}
\dot\rho_1(t)+\dot\rho_2(t)=4n-4
-4\sum_{l=1}^{n}1=4n-4-4n=-4,\qquad t\ge0.
\ee
Subtracting \eqref{r113} from \eqref{r213}, we get

\begin{eqnarray}\label{new1}
\dot\rho_2(t)-\dot \rho_1(t)=-4n\frac{\rho_2^{n/2}(t)-\rho_1^{n/2}(t)}{\rho_2^{n/2}(t)+\rho_1^{n/2}(t)}=
-4n+\frac{8n\rho_1^{n/2}(t)}{\rho_2^{n/2}(t)+\rho_1^{n/2}(t)}, \qquad t>0.
\end{eqnarray}
Here we use the equality
\[\sum_{j=1}^n\frac{x^2-1}{x^2+1-2x
\cos(\theta_n^1-\theta_n^j+\frac{\pi}{n})}=n\frac{x^n-1}{x^n+1}, \qquad 1<x\in {\mathbb R}.
\]
Combining \eqref{rh12789} and \eqref{new1}, we obtain
\be\label{rho431}
\dot\rho_1(t)=2n-2-\frac{4n\rho_1^{n/2}(t)}{\rho_2^{n/2}(t)+\rho_1^{n/2}(t)}, \quad \dot\rho_2(t)=-2n-2+\frac{4n\rho_1^{n/2}(t)}{\rho_2^{n/2}(t)+\rho_1^{n/2}(t)}, \quad t\ge0,
\ee
with the initial data \eqref{initrh120}.

Solving \eqref{rh12789} by noting \eqref{initrh120}, we get
\begin{equation*}\label{p11}
\rho_1(t)+\rho_2(t)=-4t+a_1^2+a_2^2, \qquad 0\le t<T_c:=\frac{1}{4}(a_1^2+a_2^2).
\end{equation*}
Noticing $N^+=N^-=n=\frac{N}{2}$, thus $M_0=-\frac{N}{2}=-n<0$ by noting
\eqref{mjml}. From Theorem \ref{fi}, finite time collision
must happen among the $N=2n$ vortices. Thus
there exist $1\leq j_0\leq n$ and $1\leq l_0\leq
n$ such that the vortex dipole $\mathbf x_{j_0}$ and $\mathbf
x_{n+l_0}$ will collide at $t=T_c$, i.e. $\rho_1(T_c)=\rho_2(T_c)=0$.
Therefore, the $N=2n$ vortices will be a (finite time) collision cluster.
In addition,
Figure \ref{rho122} depicts the solution
$\rho_1(t)$ and $\rho_2(t)$ of \eqref{rho431} obtained numerically with $\rho_1(0)=1$ and $\rho_2(0)=4$ for different $n\ge2$.
\end{proof}

\begin{remark}
When $a_1=a_2$, i.e. $\rho_1^0=\rho_2^0$, we can get
\begin{equation*}
\rho_1(t)=\rho_2(t)=-2t+a_1^2,
\end{equation*}
which also implies the  $N=2n$ vortices will be a (finite time) collision cluster.
\end{remark}

\begin{figure}[t!]
\centerline{\psfig{figure=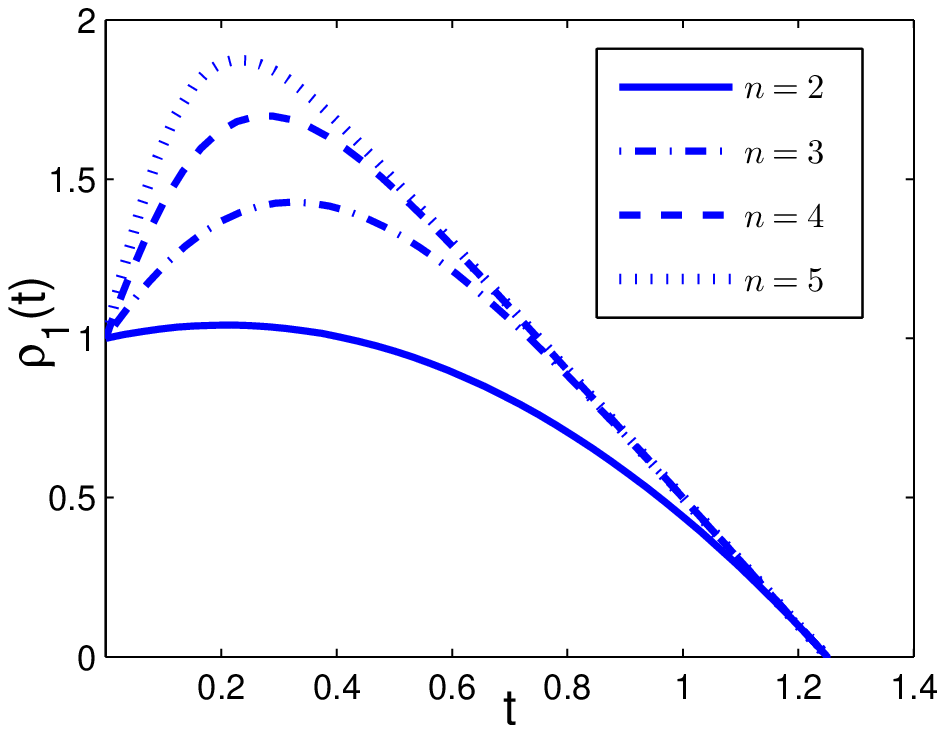,height=5cm,width=6.5cm,angle=0}
\psfig{figure=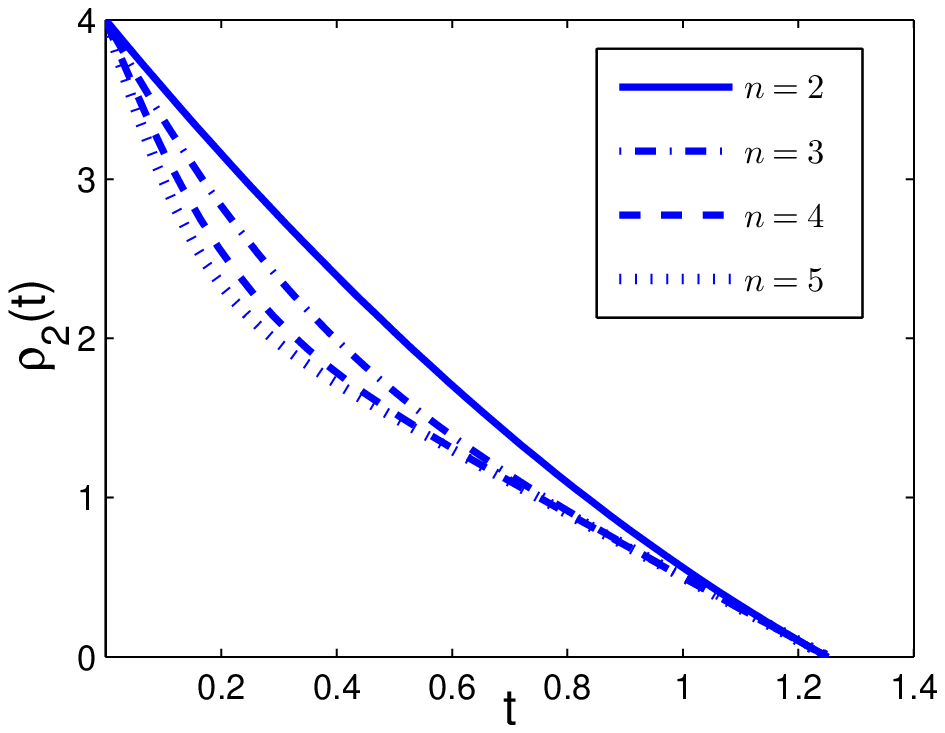,height=5cm,width=6.5cm,angle=0}}
\caption{Time evolution of $\rho_1(t)$ (left) and $\rho_2(t)$ (right)
of \eqref{rho431} with $\rho_1^0=1$ and $\rho_2^0=4$ for different $n\ge2$.}
\label{rho122}
\end{figure}

\subsection{For the interaction of two clusters and a single vortex}
Here we take $N=2n+1$ with $n\ge2$.

\begin{proposition}\label{lem5}
Taking $m_j=m_0$  for $1\le j\le N=2n+1$ and
the initial data $\bX^0$ in \eqref{GLEi} as
\be\label{patI21}
\bx_{N}^0={\bf 0}, \quad \mathbf x_j^0=a_1\left(\cos(\theta_n^j),\sin(\theta_n^j)\right)^T,\quad
\mathbf x_{n+j}^0=a_2\left(\cos(\theta_n^j),\sin(\theta_n^j)\right)^T,
\quad 1\le j\le n,
\ee
then the solution of the ODEs \eqref{GLE} with \eqref{patI21} can be given as
\be\label{solgle21}
\bx_{N}(t)\equiv {\bf 0},\ \mathbf x_j(t)=\sqrt{\rho_1}(t)\left(\cos(\theta_n^j),\sin(\theta_n^j)\right)^T,\
\mathbf x_{n+j}(t)=\sqrt{\rho_2(t)}\left(\cos(\theta_n^j),\sin(\theta_n^j)\right)^T,
\  1\le j\le n,
\ee
where when $n=2$,
\be
\rho_1(t)=C_1+10t-
\sqrt{C_2^2+8C_1t+40t^2}, \quad
\rho_2(t)=C_1+10t+
\sqrt{C_2^2+8C_1t+40t^2}, \quad
t\ge0; \nonumber
\ee
and when $n\ge3$,
\be
\rho_1(t)\sim \alpha_3 t,  \qquad \rho_2(t)\sim \beta_3 t, \qquad
t\gg 1, \nonumber
\ee
with $\alpha_3$ and $\beta_3$ being two positive constants
satisfying
\be
0<\alpha_3<\beta_3, \quad \alpha_3+\beta_3=8n+4,\quad
\beta_3-\alpha_3=4n\frac{\beta_3^{n/2}+\alpha_3^{n/2}}
{\beta_3^{n/2}-\alpha_3^{n/2}}. \nonumber
\ee
Specifically, when $n\gg1$, we have $\alpha_3\approx 2n+2$, and $\beta_3\approx 6n+2.$
\end{proposition}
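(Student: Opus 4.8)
The plan is to follow the template of Proposition~\ref{lem4}, treating the central vortex $\bx_N$ as a fixed source that merely adds a constant to the radial dynamics of the two $n$-gons. First I would note that the configuration \eqref{patI21} has mass centre at the origin and is invariant under the rotation $Q(2\pi/n)$ about the origin; by Lemma~\ref{invar12}(iii) and uniqueness for \eqref{GLE}, the whole trajectory is $Q(2\pi/n)$-invariant, and since the only point fixed by a nontrivial rotation about the origin is the origin itself we get $\bx_N(t)\equiv{\bf 0}$, while the inner (resp.\ outer) $n$ vortices must remain a regular $n$-gon at the angles $\theta_n^j$ with some radius $\sqrt{\rho_1(t)}$ (resp.\ $\sqrt{\rho_2(t)}$). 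Equivalently one checks that \eqref{solgle21} is a consistent ansatz: with $\bx_N={\bf 0}$ the velocity of the centre is $-2m_N\sum_k m_k\,\bx_k/|\bx_k|^2={\bf 0}$, because $\sum_{j=1}^n(\cos\theta_n^j,\sin\theta_n^j)^T={\bf 0}$ for $n\ge2$.

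Next, substituting \eqref{solgle21} into \eqref{GLE} and dotting the $j$-th (resp.\ $(n+j)$-th) equation with $\bx_j$ (resp.\ $\bx_{n+j}$), the self-interaction inside an $n$-gon again yields the constant $2n-2$, the cross-interaction of one $n$-gon with the other is summed via the identity $\sum_{j=1}^n\frac{x^2-1}{x^2+1-2x\cos(\theta_n^1-\theta_n^j)}=n\frac{x^n+1}{x^n-1}$ ($x>1$) exactly as in Proposition~\ref{lem4}, and the single new term --- the interaction of a vortex with the central one --- contributes precisely $+4$ to each of $\dot\rho_1,\dot\rho_2$ (since $m_jm_N=1$ and $\bx_j\cdot\bx_j/|\bx_j|^2=1$). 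Hence the reduced system is that of Proposition~\ref{lem4} with the constants shifted up by $4$:
\[
\dot\rho_1=2n+2-\frac{4n\rho_1^{n/2}}{\rho_2^{n/2}-\rho_1^{n/2}},\qquad
\dot\rho_2=6n+2+\frac{4n\rho_1^{n/2}}{\rho_2^{n/2}-\rho_1^{n/2}},\qquad \rho_1(0)=a_1^2<\rho_2(0)=a_2^2,
\]
so that $\dot\rho_1+\dot\rho_2=8n+4$ and $\dot\rho_2-\dot\rho_1=4n\frac{\rho_2^{n/2}+\rho_1^{n/2}}{\rho_2^{n/2}-\rho_1^{n/2}}$. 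For $n=2$ the exponent is $1$; writing $u:=\rho_2-\rho_1$ the difference equation becomes $u\,\dot u=16C_1+160t$, which integrates (with $u(0)=2C_2$) to $u=2\sqrt{C_2^2+8C_1t+40t^2}$, and combining with $\rho_1+\rho_2=2C_1+20t$ gives the stated closed forms.

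For $n\ge3$: since all winding numbers coincide, Theorem~\ref{NNC} gives $T_{\rm max}=+\infty$; in particular the inner $n$-gon never reaches the centre and the two $n$-gons never coincide, i.e.\ $0<\rho_1(t)<\rho_2(t)$ for all $t$, so the reduced system is globally defined, with $\dot\rho_2>0$ and $\dot\rho_2-\dot\rho_1>0$, hence $\rho_2$ and $\rho_2-\rho_1$ increasing and $\rho_2\to+\infty$ (at least two vortices escape by Theorem~\ref{NNC}, so by symmetry all outer ones do). To pin the linear rate I would introduce the slow variable $v:=\rho_1/(\rho_1+\rho_2)\in(0,\tfrac12)$ with $S(t):=\rho_1+\rho_2=2C_1+(8n+4)t$: then $\dot v=S(t)^{-1}g(v)$, $g(v)=2n+2-\frac{4n\,w^{n/2}}{1-w^{n/2}}-(8n+4)v$, $w=v/(1-v)$. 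One checks $g$ is strictly decreasing on $(0,\tfrac12)$ with $g(0^+)=2n+2>0$ and $g(v)\to-\infty$ as $v\to(1/2)^-$, so $g$ has a unique zero $v^*$; after the time change $\tau=\int_0^tS(s)^{-1}\,ds$ (which ranges over $[0,+\infty)$) the equation becomes autonomous with $v^*$ globally attracting, whence $v(t)\to v^*$ and therefore $\rho_1(t)/t\to\alpha_3:=(8n+4)v^*$, $\rho_2(t)/t\to\beta_3:=(8n+4)(1-v^*)$. Because the right-hand sides of the reduced equations depend only on $w\to w^*:=v^*/(1-v^*)$, also $\dot\rho_1\to\alpha_3$, $\dot\rho_2\to\beta_3$; passing to the limit in $\dot\rho_1+\dot\rho_2=8n+4$ and in the difference equation gives $0<\alpha_3<\beta_3$, $\alpha_3+\beta_3=8n+4$, $\beta_3-\alpha_3=4n\frac{\beta_3^{n/2}+\alpha_3^{n/2}}{\beta_3^{n/2}-\alpha_3^{n/2}}$. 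Finally, for $n\gg1$, $\alpha_3/\beta_3$ stays bounded away from $1$, so $(\alpha_3/\beta_3)^{n/2}\to0$, forcing $\beta_3-\alpha_3\approx4n$ and hence $\alpha_3\approx2n+2$, $\beta_3\approx6n+2$.

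The only nonroutine point is this last step --- showing $\rho_i(t)/t$ genuinely converges rather than oscillating between the algebraic bounds --- which the monotone slow--fast reduction above is meant to handle; the remaining computations are bookkeeping parallel to Proposition~\ref{lem4}.
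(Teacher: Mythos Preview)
Your proof is correct and follows the same template as the paper, which simply notes that symmetry forces $\bx_N(t)\equiv{\bf 0}$ and then refers the reader back to Proposition~\ref{lem4} for the rest. The one place where you go beyond both the paper's proof here and its proof of Proposition~\ref{lem4} is the asymptotic step: the paper merely asserts that $\rho_i(t)\sim\alpha_i t$ without explaining why the ratios converge rather than oscillate, whereas your slow--fast reduction via $v=\rho_1/(\rho_1+\rho_2)$ and the logarithmic time change $\tau=\int_0^t S^{-1}\,ds$ turns the question into convergence to the unique stable equilibrium of a scalar autonomous ODE $dv/d\tau=g(v)$ with $g$ strictly decreasing. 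That is a genuine improvement in rigor; everything else (the $+4$ shift from the central vortex, the $n=2$ quadrature, the large-$n$ heuristic) is exactly the bookkeeping the paper has in mind.
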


\begin{proof}
Due to symmetry, we get $\bx_N(t)\equiv {\bf 0}$ for $t\ge0$.
The rest of the proof is analogue to that of Proposition \ref{lem4} and thus it is
omitted here for brevity.
\end{proof}

\begin{proposition}\label{lem8}
Taking $m_j=m_0$  for $1\le j\le N=2n+1$ and
the initial data $\bX^0$ in \eqref{GLEi} as
\be\label{patI22}
\bx_N^0={\bf 0}, \quad \mathbf x_j^0=a_1\left(\cos(\theta_n^j),\sin(\theta_n^j)\right)^T,\quad
\mathbf x_{n+j}^0=a_2\left(\cos(\alpha_n^j),\sin(\alpha_n^j)\right)^T,
\quad 1\le j\le n,
\ee
then the solution of the ODEs \eqref{GLE} with \eqref{patI22} can be given as
\be\label{solgle22}
\bx_N(t)\equiv {\bf 0},\ \mathbf x_j(t)=\sqrt{\rho_1(t)}\left(\cos(\theta_n^j),\sin(\theta_n^j)\right)^T,\
\mathbf x_{n+j}(t)=\sqrt{\rho_2(t)}\left(\cos(\alpha_n^j),\sin(\alpha_n^j)\right)^T,
\  1\le j\le n,
\ee
where when $n=2$,
\be
\rho_1(t)=C_1+10t-C_2\left(
1+\frac{10t}{C_1}\right)^{2/5},\quad \rho_2(t)=C_1+10t+C_2\left(
1+\frac{10t}{C_1}\right)^{2/5},\qquad
t\ge0; \nonumber
\ee
and when $n\ge3$,
\be
\rho_1(t)\sim \alpha_4 t,  \qquad \rho_2(t)\sim \beta_4 t, \qquad
t\gg 1, \nonumber
\ee
with $\alpha_4$ and $\beta_4$ being two positive constants
satisfying
\be \label{apb22}
0<\alpha_4< \beta_4, \quad \alpha_4+\beta_4=8n+4,\quad
\beta_4-\alpha_4=4n\frac{\beta_4^{n/2}-\alpha_4^{n/2}}
{\beta_4^{n/2}+\alpha_4^{n/2}}. \nonumber
\ee
Specifically, when $n\gg1$, $\alpha_4\approx 2n+2$ and  $\beta_4\approx 6n+2$.
\end{proposition}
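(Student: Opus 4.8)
The plan is to exploit the dihedral symmetry of the configuration and reduce \eqref{GLE} to a planar system in the squared radii $(\rho_1,\rho_2)$, exactly as in the proofs of Proposition \ref{lem4}, Proposition \ref{lem5} and Proposition \ref{lem7}. First I would note that, since all winding numbers equal $m_0$, the labelled initial data \eqref{patI22} is invariant under the rotation $Q(2\pi/n)$ composed with the cyclic relabelling $j\mapsto j+1$ on the inner ring, $n+j\mapsto n+j+1$ on the outer ring, and $N\mapsto N$; by uniqueness for \eqref{GLE} and its rotational equivariance (Lemma \ref{invar12}(iii)), the solution inherits this symmetry. Since the origin is the unique fixed point of $Q(2\pi/n)$ for $n\ge2$ and $\bx_N^0=\mathbf 0$, this forces $\bx_N(t)\equiv\mathbf 0$, while the inner and outer rings remain rigid rotationally symmetric rings of common radii $\sqrt{\rho_1(t)}$, $\sqrt{\rho_2(t)}$ at the fixed angular positions $\theta_n^j$, $\alpha_n^j$; this justifies the ansatz \eqref{solgle22}.

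Next I would substitute \eqref{solgle22} into \eqref{GLE} and split the $N-1$ forces on $\bx_1$ (respectively $\bx_{n+1}$) into three groups: the $n-1$ interactions within the same ring, which contribute the constant $2n-2$ as in \eqref{r1}; the single interaction with the pinned vortex at $\mathbf 0$, which contributes $+4$ to each of $\dot\rho_1$ and $\dot\rho_2$; and the $n$ cross-ring interactions, which are summed by the identity $\sum_{j=1}^n (x^2-1)/(x^2+1-2x\cos(\theta_n^1-\theta_n^j+\pi/n))=n(x^n-1)/(x^n+1)$ used in the proof of Proposition \ref{th31}, here with a $+$ sign in front of the cross sum since all winding numbers are $m_0$. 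This yields
\[
\dot\rho_1(t)+\dot\rho_2(t)=8n+4,\qquad \dot\rho_2(t)-\dot\rho_1(t)=4n\,\frac{\rho_2^{n/2}(t)-\rho_1^{n/2}(t)}{\rho_2^{n/2}(t)+\rho_1^{n/2}(t)},\qquad t>0,
\]
with initial data \eqref{initrh120}; equivalently it is the exact analogue of \eqref{rho678}--\eqref{rho431} with $8n-4$ replaced by $8n+4$, the extra $+8$ coming from the central vortex. For $n=2$ this integrates in closed form: the first relation gives $\rho_1+\rho_2=2C_1+20t$, and setting $w=\rho_2-\rho_1$ the second becomes the linear equation $\dot w/w=8/(2C_1+20t)=4/(C_1+10t)$, so $w(t)=2C_2\,(1+10t/C_1)^{2/5}$ after matching $w(0)=a_2^2-a_1^2=2C_2$; solving for $\rho_1,\rho_2$ produces the stated formulas.

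For $n\ge3$ the system is no longer elementary, so I would argue qualitatively. Since all $N$ vortices carry winding number $m_0$, Theorem \ref{NNC} gives $T_{\max}=+\infty$, hence $0<\rho_1(t)<\rho_2(t)$ for all $t$; consequently the ratio $(\rho_2^{n/2}-\rho_1^{n/2})/(\rho_2^{n/2}+\rho_1^{n/2})$ lies in $(0,1)$, the system gives $\dot\rho_1\in(2n+2,4n+2)$ and $\dot\rho_2\in(4n+2,6n+2)$, so both $\rho_1$ and $\rho_2$ grow linearly (and $\rho_2\to+\infty$, consistently with $n(\rho_1+\rho_2)=\sum_j|\bx_j(t)|^2=4M_0t+H_2^0\to+\infty$ via \eqref{fi11}, \eqref{mccon1}, or with Theorem \ref{fi}(iii)). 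Dividing the system by $t$ and passing to a limit, any limit point $(\alpha_4,\beta_4)$ of $(\rho_1(t)/t,\rho_2(t)/t)$ satisfies $0<\alpha_4<\beta_4$, $\alpha_4+\beta_4=8n+4$, and $\beta_4-\alpha_4=4n(\beta_4^{n/2}-\alpha_4^{n/2})/(\beta_4^{n/2}+\alpha_4^{n/2})$; on the segment $\{\alpha+\beta=8n+4,\ 0<\alpha<\beta\}$ the function $(\beta-\alpha)-4n(\beta^{n/2}-\alpha^{n/2})/(\beta^{n/2}+\alpha^{n/2})$ is strictly monotone in $\alpha$ and changes sign, so the pair is unique and $\rho_1\sim\alpha_4t$, $\rho_2\sim\beta_4t$. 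Finally, for $n\gg1$ the quotient $(\beta_4^{n/2}-\alpha_4^{n/2})/(\beta_4^{n/2}+\alpha_4^{n/2})\to1$ because $\beta_4/\alpha_4$ stays bounded away from $1$, so $\beta_4-\alpha_4\approx4n$, which together with $\alpha_4+\beta_4=8n+4$ gives $\alpha_4\approx2n+2$, $\beta_4\approx6n+2$.

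The step I expect to be the real obstacle is upgrading the $n\ge3$ asymptotics from the crude linear bounds to the genuine limits $\rho_i(t)/t\to\gamma_i$. The natural remedy parallels the orbital-stability argument of Section 3: introduce the logarithmic time $\sigma=\tfrac14\ln\!\bigl((8n+4)t/(2C_1)+1\bigr)$ and the scaled variables $u_i(\sigma)=\rho_i(t)/t$ (or, more conveniently, $\rho_2-\rho_1$ together with $\rho_1/\rho_2$), which turns the system into an autonomous planar ODE on the invariant region $\{0<u_1<u_2\}$ whose unique interior equilibrium is $(\alpha_4,\beta_4)$; checking that this equilibrium is hyperbolic and attracts the whole region then yields convergence. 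Everything else -- the symmetry reduction, the trigonometric summation, and the $n=2$ quadrature -- is routine and mirrors Propositions \ref{lem4}--\ref{lem7}.
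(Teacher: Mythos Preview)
Your proposal is correct and follows the same route as the paper: the paper's own proof is a single sentence (``Due to symmetry, we get $\bx_N(t)\equiv\mathbf 0$ for $t\ge0$, and the rest of the proof is analogous to that of Propositions \ref{lem4} and \ref{th31}, thus it is omitted here for brevity''), and you have simply unpacked that analogy in full, arriving at the correct reduced system $\dot\rho_1+\dot\rho_2=8n+4$, $\dot\rho_2-\dot\rho_1=4n(\rho_2^{n/2}-\rho_1^{n/2})/(\rho_2^{n/2}+\rho_1^{n/2})$ and the correct $n=2$ quadrature. Your discussion of the $n\ge3$ asymptotics is in fact more careful than the paper's treatment in Proposition \ref{lem4}, where the existence of the linear rates $\alpha_1,\beta_1$ is asserted rather than proved; the autonomous-reduction idea you sketch is a reasonable way to close that gap if one wants full rigor.
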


\begin{proof}
Due to symmetry, we get $\bx_N(t)\equiv {\bf 0}$ for $t\ge0$, and the rest of the proof is analogue to that of Proposition \ref{lem4} and \ref{th31}, thus it is
omitted here for brevity.
\end{proof}


\begin{proposition}\label{lem6}
Taking $m_N=-m_0$, $m_j=m_0$  for $1\le j\le 2n=N-1$ and
the initial data $\bX^0$ in \eqref{GLEi} as
\eqref{patI21}, then the solution of the ODEs
\eqref{GLE} with \eqref{patI21} can be given as
 \eqref{solgle21},
where

(i) when $n=2$, then $\mathbf x_1(t)$, $\mathbf x_2(t)$ and $\mathbf x_5(t)$
be a collision cluster among the $5$ vortices and they will
collide at the origin $(0,0)^T$ in finite time;

(ii) when $n=3$, then
\be\label{p516}
\rho_1(t)\sim \left(\frac{a_1a_2}{a_1+a_2}\right)^2,
\qquad \rho_2(t)\sim 12t, \qquad t\gg1;
\ee

(iii) when $n\geq 4$, then
\be
\rho_1(t)\sim \alpha_5 t,\qquad
\rho_2(t)\sim \beta_5 t, \qquad t\gg 1, \nonumber
\ee
with $\alpha_5$ and $\beta_5$ being two positive constants satisfying
\be
0<\alpha_5< \beta_5, \quad \alpha_5+\beta_5=8n-12,\quad
\beta_5-\alpha_5=4n\frac{\beta_5^{n/2}+\alpha_5^{n/2}}
{\beta_5^{n/2}-\alpha_5^{n/2}}. \nonumber
\ee
Specifically, when $n\gg1$, we have $\alpha_5\approx 2n-6$, and $\beta_5\approx 6n-6.$
\end{proposition}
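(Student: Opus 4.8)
The plan is to reduce the $N=(2n+1)$-body system to a planar ODE for the two ring radii and then analyze its long-time behaviour case by case in $n$.

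The first step is to exploit symmetry. The initial configuration \eqref{patI21} is invariant under the dihedral action generated by the rotation $Q(2\pi/n)$ and a reflection, every vortex on a given ring carries the same winding number, and the single opposite-sign vortex sits at the rotation centre ${\bf 0}$. By the rotational invariance of \eqref{GLE} (Lemma \ref{invar12}(iii)) together with uniqueness for \eqref{GLE}--\eqref{GLEi}, the solution retains this symmetry; in particular $\bx_N(t)\equiv{\bf 0}$ and each ring remains a ring, which justifies the ansatz \eqref{solgle21}. Plugging \eqref{solgle21} into \eqref{GLE}, the intra-ring sums contribute $2n-2$ to each of $\dot\rho_1,\dot\rho_2$, the inter-ring sums are evaluated exactly as in the proof of Proposition \ref{lem4} via $\sum_{j=1}^n\frac{x^2-1}{x^2+1-2x\cos(\theta_n^1-\theta_n^j)}=n\frac{x^n+1}{x^n-1}$ with $x=\sqrt{\rho_2/\rho_1}>1$, and the central vortex, having winding number $-m_0$, contributes $-4$ to each of $\dot\rho_1$ and $\dot\rho_2$. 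This yields
\be
\dot\rho_1=(2n-6)-\frac{4n\rho_1^{n/2}}{\rho_2^{n/2}-\rho_1^{n/2}},\qquad
\dot\rho_2=(6n-6)+\frac{4n\rho_1^{n/2}}{\rho_2^{n/2}-\rho_1^{n/2}},\nonumber
\ee
with $\rho_1(0)=a_1^2<\rho_2(0)=a_2^2$, so that $\rho_1+\rho_2=a_1^2+a_2^2+(8n-12)t$ while $\dot\rho_2-\dot\rho_1=4n\frac{\rho_2^{n/2}+\rho_1^{n/2}}{\rho_2^{n/2}-\rho_1^{n/2}}>0$; in particular $\rho_2-\rho_1$ is increasing and stays positive, so the reduced system is regular as long as $\rho_1>0$.

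Next I would dispatch the easy regimes. For $n=2$ the first equation reads $\dot\rho_1=-2-8\rho_1/(\rho_2-\rho_1)\le-2$, hence $\rho_1$ vanishes at some $\Tmax\le a_1^2/2$ while $\rho_2(\Tmax)=a_1^2+a_2^2+4\Tmax>0$; since $\bx_1(t),\bx_2(t)=\pm\sqrt{\rho_1(t)}(\cos\theta_0,\sin\theta_0)^T\to{\bf 0}$, $\bx_5\equiv{\bf 0}$, and $\bx_3,\bx_4$ remain at distance $\sqrt{\rho_2}$ bounded away from $0$, the set $\{\bx_1,\bx_2,\bx_5\}$ is a finite-time collision cluster collapsing at the origin. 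For $n=3$ a direct differentiation gives $\frac{d}{dt}\bigl(\rho_1^{-1/2}+\rho_2^{-1/2}\bigr)=0$, so $\rho_1^{-1/2}+\rho_2^{-1/2}\equiv a_1^{-1}+a_2^{-1}$; this forces $\sqrt{\rho_1}\ge a_1a_2/(a_1+a_2)>0$ (whence $\Tmax=+\infty$), and combined with $\rho_2\ge a_2^2+12t\to+\infty$ it yields $\rho_1\to\bigl(a_1a_2/(a_1+a_2)\bigr)^2$ and then $\rho_2=(8n-12)t+a_1^2+a_2^2-\rho_1\sim12t$, which is \eqref{p516}.

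For $n\ge4$ I would first exclude finite-time collision: if $\rho_1(t)\to0$ as $t\to T^-<\infty$, then $\dot\rho_1\to2n-6>0$ near $T$, a contradiction, and the remaining potential collisions are ruled out by $\rho_2>\rho_1>0$ with $\rho_2-\rho_1$ and $\rho_2$ increasing; hence $\Tmax=+\infty$, $\rho_1+\rho_2=(8n-12)t+O(1)\to+\infty$, and $\rho_2\to+\infty$. Moreover $\rho_1\to+\infty$, for if $\rho_1$ stayed bounded then $\dot\rho_1\to2n-6>0$, a contradiction. Writing $v:=\rho_1/\rho_2\in(0,1)$ one computes $\dot v=g(v)/\rho_2$ with $g(v)=(2n-6)-(6n-6)v-4nv^{n/2}(1+v)/(1-v^{n/2})$; since $g$ is strictly decreasing on $(0,1)$ with $g(0^+)=2n-6>0$ and $g(1^-)=-\infty$ it has a unique zero $v^*\in(0,1)$, and because $\int^\infty dt/\rho_2=+\infty$ the sign of $g$ drives $v$ monotonically to $v^*$. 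Together with $\rho_1+\rho_2=(8n-12)t+O(1)$ this gives $\rho_1\sim\alpha_5 t$, $\rho_2\sim\beta_5 t$ with $\alpha_5=v^*(8n-12)/(1+v^*)$, $\beta_5=(8n-12)/(1+v^*)$; passing to the limit in the reduced system recovers $\alpha_5+\beta_5=8n-12$, $\beta_5-\alpha_5=4n(\beta_5^{n/2}+\alpha_5^{n/2})/(\beta_5^{n/2}-\alpha_5^{n/2})$ with $0<\alpha_5<\beta_5$, and the large-$n$ asymptotics $\alpha_5\approx2n-6$, $\beta_5\approx6n-6$ follow as in Proposition \ref{lem4}. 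The main obstacle is precisely this last case: showing that $\rho_1$ and $\rho_2$ each grow linearly and that their ratio converges, rather than $\rho_1$ remaining bounded or the ratio oscillating, which is where the monotonicity of $g$ and the divergence of $\int dt/\rho_2$ are the essential ingredients.
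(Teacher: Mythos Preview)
Your proof is correct and follows the paper's overall approach: reduce by symmetry to the two--ring system for $(\rho_1,\rho_2)$, obtain the same pair of ODEs, and analyse case by case in $n$.

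The differences are in execution. For $n=2$ the paper solves the reduced system explicitly, obtaining $\rho_1(t)=2t+C_1-\sqrt{8t^2+8C_1t+C_2^2}$ and hence the exact collision time $T_c=\tfrac12\bigl[-C_1+\sqrt{C_1^2+2a_1^2a_2^2}\,\bigr]$; your differential--inequality argument $\dot\rho_1\le-2$ is shorter and suffices for the stated conclusion, at the cost of the explicit $T_c$. For $n=3$ both arguments hinge on the same first integral $\rho_1^{-1/2}+\rho_2^{-1/2}$. For $n\ge4$ the paper simply refers back to Proposition~\ref{lem4}, whose own justification of the linear asymptotics is brief; your treatment via the ratio $v=\rho_1/\rho_2$, the strict monotonicity of $g$, and the divergence of $\int^\infty dt/\rho_2$ (equivalently a time change $s=\int_0^t d\tau/\rho_2$ turning $\dot v=g(v)/\rho_2$ into the autonomous $dv/ds=g(v)$ on an infinite $s$--interval) supplies a cleaner and more complete proof of $v\to v^*$, hence of $\rho_j\sim\alpha_5 t,\beta_5 t$.
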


\begin{proof} Similar to the proof of Proposition \ref{lem4} and \ref{th31},
the solution of the ODEs
\eqref{GLE} with (\ref{patI21}) can be given as
 \eqref{solgle21}, where
\begin{equation}\label{p51}
\dot\rho_1(t)+\dot\rho_2(t)=8n-12, \qquad
\dot\rho_2(t)-\dot\rho_1(t)=4n+\frac{
8n\rho_1^{n/2}(t)}
{\rho_2^{n/2}(t)-\rho_1^{n/2}(t)}, \qquad t>0, \nonumber
\end{equation}
which implies
\be\label{p512}
\dot\rho_1(t)=2n-6-\frac{4n\rho_1^{n/2}(t)}
{\rho_2^{n/2}(t)-\rho_1^{n/2}(t)},\qquad \dot\rho_2(t)=6n-6+\frac{4n
\rho_1^{n/2}(t)}
{\rho_2^{n/2}(t)-\rho_1^{n/2}(t)},\qquad t>0.
\ee

1) When $n=2$, \eqref{p512} reduces to
\be\label{p513}
\dot\rho_1(t)=-2-\frac{8\rho_1(t)}
{\rho_2(t)-\rho_1(t)},\qquad \dot\rho_2(t)=6+\frac{8
\rho_1(t)}
{\rho_2(t)-\rho_1(t)},\qquad t>0.
\ee
Solving \eqref{p513} with the initial data \eqref{initrh120}, we get
\be
\rho_1(t)=2t+C_1-\sqrt{8t^2+8C_1t+C_2^2}, \quad
\rho_2(t)=2t+C_1+\sqrt{8t^2+8C_1t+C_2^2}, \quad t\ge0. \nonumber
\ee
Thus there exists a $T_c:=\frac{1}{2}\left[-C_1+\sqrt{C_1^2+2a_1^2a_2^2}\right]>0$,
such that
\be
\rho_1(T_c)=0,\quad \rho_2(T_c)>0,\quad  \rho_1(t)>0,\quad \rho_2(t)>0,\quad t\in[0, T_c), \nonumber
\ee
which immediately implies that  $\mathbf x_1(t)$, $\mathbf x_2(t)$ and $\mathbf x_5(t)$ be a collision cluster among the $5$ vortices and they will
collide at the origin $(0,0)^T$  when $t\to T_c^-$.

 2) When $n=3$, \eqref{p512} reduces to
 \be\label{p515}
\dot\rho_1(t)=-\frac{12\rho_1^{3/2}(t)}{
\rho_2^{3/2}(t)-\rho_1^{3/2}(t)},\qquad
\dot\rho_2(t)=\frac{12\rho_2^{3/2}(t)}{\rho_2^{3/2}(t)-\rho_1^{3/2}(t)},
\qquad t>0,
\ee
which immediately implies
\be \label{p514}
\frac{d}{dt}\left[\frac{1}{\sqrt{\rho_1(t)}}+\frac{1}{\sqrt{\rho_2(t)}}\right]=0
\Longrightarrow \frac{1}{\sqrt{\rho_1(t)}}+\frac{1}{\sqrt{\rho_2(t)}}\equiv
\frac{a_1+a_2}{a_1a_2}, \qquad t\ge0.
\ee
Since $0<a_1<a_2$, then $\rho_1(t)$ and  $\rho_2(t)$ are monotonically decreasing and increasing functions, respectively. From \eqref{p514},
we know that $0<\rho_1(t)<\rho_2(t)$ for $t\ge0$ and thus $T_{\rm max}=+\infty$, i.e. there is no finite time collision. Noting that $M_0>0$, by Theorem
\ref{fi}, we have
\be\label{p517}
\lim_{t\rightarrow+\infty}\rho_2(t)=+\infty.
\ee
Combining \eqref{p517}, \eqref{p514} and \eqref{p515}, we obtain
\eqref{p516} immediately.

3) When $n\geq 4$, the proof is analogue to that
of Proposition \ref{lem4} and thus it is
omitted here for brevity.

In addition,
Figure \ref{rho123} depicts the solution
$\rho_1(t)$ and $\rho_2(t)$ of \eqref{p512} obtained numerically with $\rho_1^0=1$ and $\rho_2^0=4$ for different $n\ge2$.
\end{proof}

\begin{figure}[t!]
\centerline{\psfig{figure=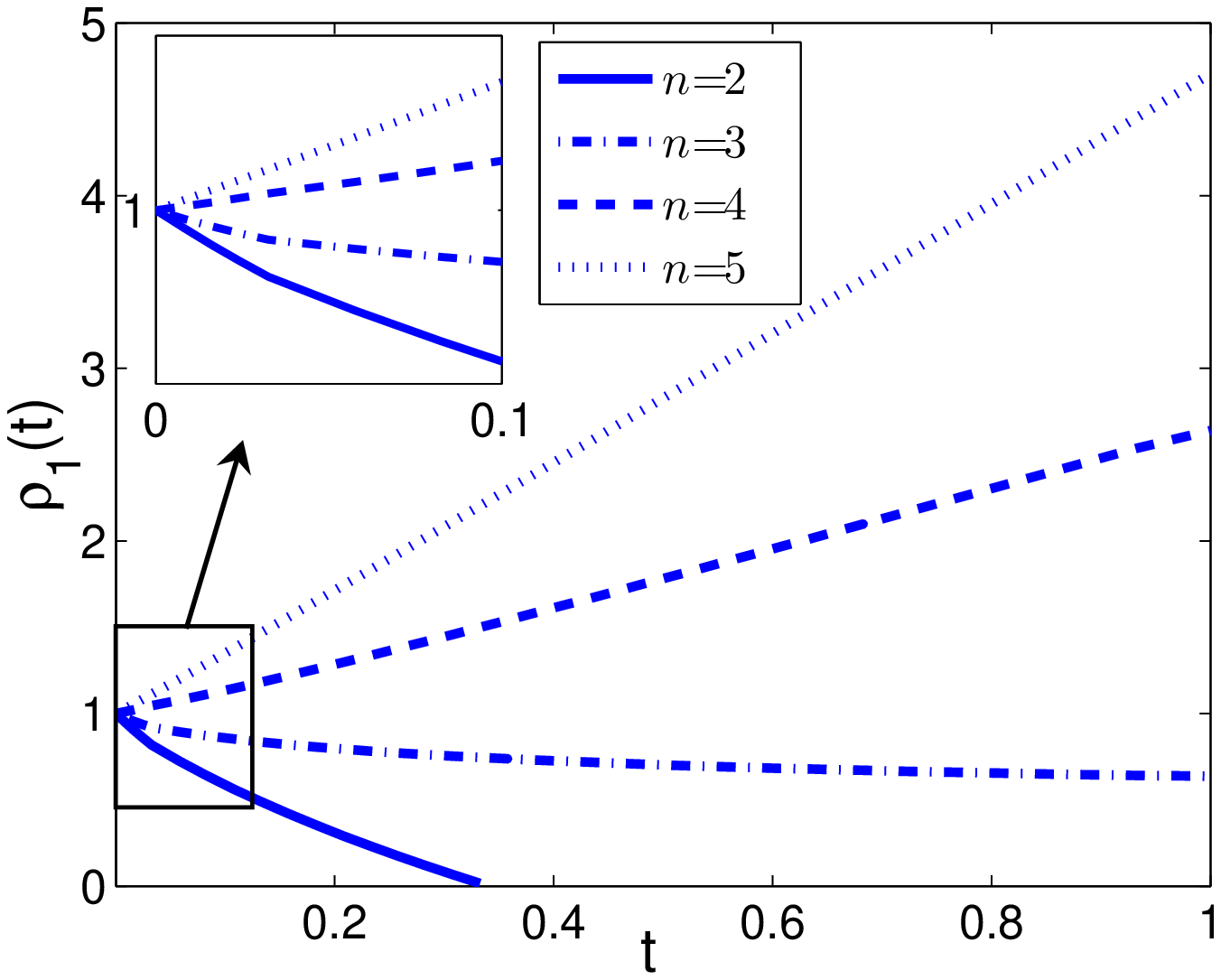,height=5cm,width=6.5cm,angle=0}
\psfig{figure=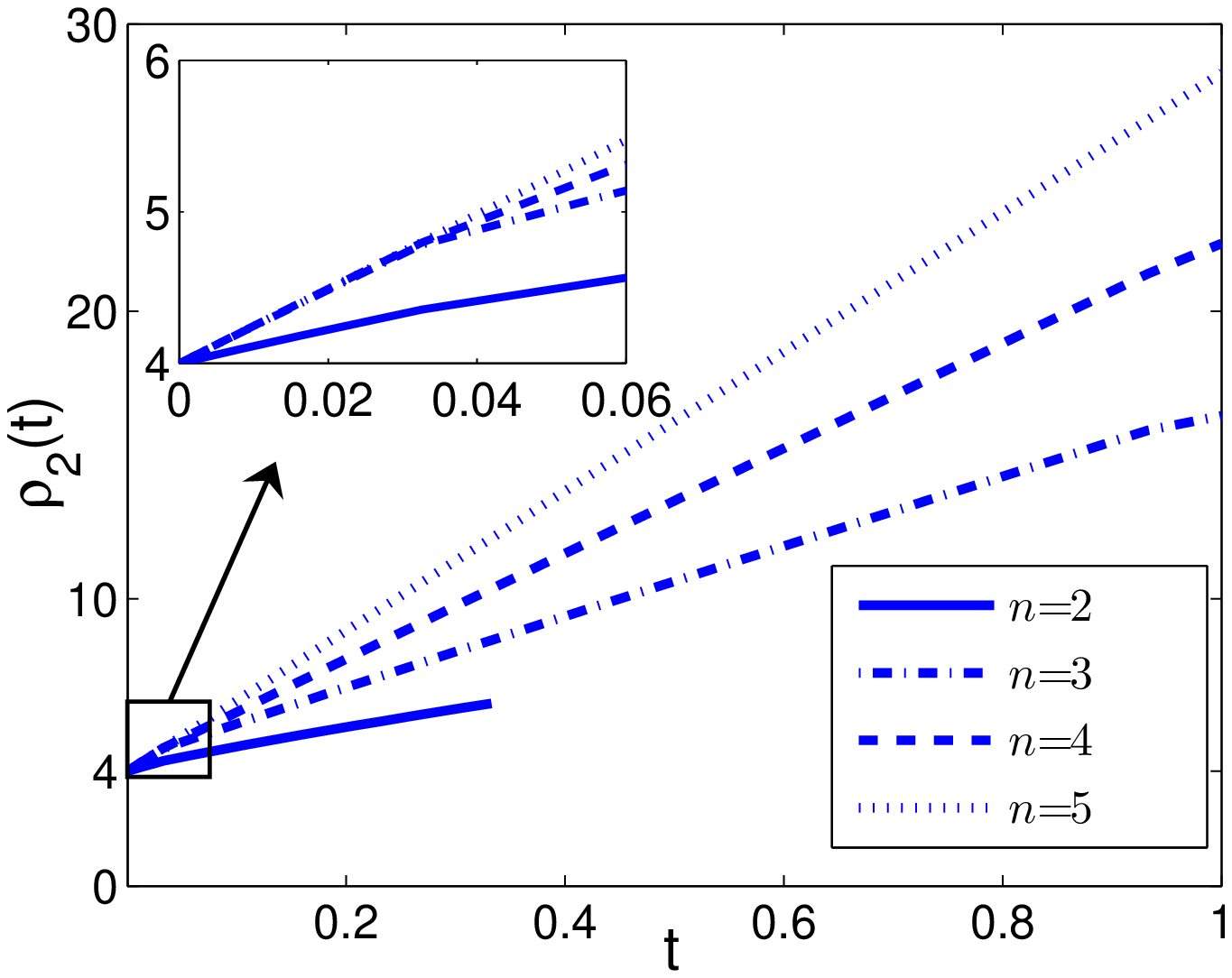,height=5cm,width=6.5cm,angle=0}}
\caption{Time evolution of $\rho_1(t)$ (left) and $\rho_2(t)$ (right)
of \eqref{p512} with $\rho_1^0=1$ and $\rho_2^0=4$ for different $n\ge2$.}
\label{rho123}
\end{figure}

\begin{proposition}\label{lem9}
Taking $m_N=-m_0$, $m_j=m_0$  for $1\le j\le 2n=N-1$ and
the initial data $\bX^0$ in \eqref{GLEi} as
\eqref{patI22}, then the solution of the ODEs
\eqref{GLE} with \eqref{patI22} can be given as
 \eqref{solgle22},
where

(i) when $n=2$, only the three vortices $\mathbf x_1(t),\ \mathbf x_2(t)$ and $\mathbf x_5(t)$  be a collision cluster among the $5$ vortices and they will
collide at the origin $(0,0)^T$ in finite time;

(ii) when $n=3$, then \be
\rho_1(t)\sim\left(\frac{a_1a_2}{a_2-a_1}\right)^2,
\qquad \rho_2(t)\sim 12t, \qquad t\gg1; \nonumber
\ee

(iii) when $n\geq 4$, then
\be
\rho_1(t)\sim \alpha_6 t,\qquad
\rho_2(t)\sim \beta_6 t, \qquad t\gg 1, \nonumber
\ee
with $\alpha_6$ and $\beta_6$ being two positive constants satisfying
\be
0<\alpha_6<\beta_6, \quad \alpha_6+\beta_6=8n-12,\quad
\beta_6-\alpha_6=4n\frac{\beta_6^{n/2}-\alpha_6^{n/2}}
{\beta_6^{n/2}+\alpha_6^{n/2}}. \nonumber
\ee
Specifically, when $n\gg1$, we have $\alpha_6\approx 2n-6$, and $\beta_6\approx 6n-6$.

\end{proposition}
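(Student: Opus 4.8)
The plan is to adapt, nearly verbatim, the arguments used for Propositions~\ref{lem4}, \ref{th31} and \ref{lem6}. The only new ingredient is that the $\pi/n$--shift between the two sub-clusters in \eqref{patI22} replaces the cluster--cluster identity $\sum_{j=1}^n\frac{x^2-1}{x^2+1-2x\cos(\theta_n^1-\theta_n^j)}=n\frac{x^n+1}{x^n-1}$ by the one already invoked in Proposition~\ref{th31}, namely $\sum_{j=1}^n\frac{x^2-1}{x^2+1-2x\cos(\theta_n^1-\theta_n^j+\pi/n)}=n\frac{x^n-1}{x^n+1}$. First I would observe that the configuration \eqref{patI22} is invariant under $Q(2\pi/n)$, which permutes each sub-cluster cyclically and fixes the origin, and that $\sum_{j=1}^n\bnn(\theta_n^j)=\sum_{j=1}^n\bnn(\alpha_n^j)={\bf 0}$; hence the velocity of $\bx_N$ at the origin vanishes, so by uniqueness for \eqref{GLE} the ansatz \eqref{solgle22} with $\bx_N(t)\equiv{\bf 0}$ is consistent. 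Substituting \eqref{solgle22} into \eqref{GLE}, and using that the self-interaction of each sub-cluster contributes $2n-2$ to the corresponding $\dot\rho_i$, the repulsive cross-cluster interaction contributes $4n$ to $\dot\rho_1+\dot\rho_2$ (and the above trigonometric sum to $\dot\rho_2-\dot\rho_1$), and the attractive, opposite-winding interaction with $\bx_N$ contributes $-4$ to each of $\dot\rho_1,\dot\rho_2$, I obtain
\be
\dot\rho_1(t)+\dot\rho_2(t)=8n-12,\qquad \dot\rho_2(t)-\dot\rho_1(t)=4n\,\frac{\rho_2^{n/2}(t)-\rho_1^{n/2}(t)}{\rho_2^{n/2}(t)+\rho_1^{n/2}(t)},\qquad t>0,\nonumber
\ee
equivalently $\dot\rho_1=2n-6+\frac{4n\rho_1^{n/2}}{\rho_1^{n/2}+\rho_2^{n/2}}$ and $\dot\rho_2=6n-6-\frac{4n\rho_1^{n/2}}{\rho_1^{n/2}+\rho_2^{n/2}}$, with $\rho_1(0)=a_1^2<\rho_2(0)=a_2^2$. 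Since the right side of the second equation is positive when $\rho_2>\rho_1$, the gap $\rho_2-\rho_1$ can never vanish, so $0<\rho_1(t)<\rho_2(t)$ throughout the interval of existence.

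For $n=2$ the system is explicitly integrable: with $u:=\rho_1+\rho_2=4t+a_1^2+a_2^2$ and $v:=\rho_2-\rho_1$, the relations $\dot v=8v/u$ and $\dot u=4$ give $dv/du=2v/u$, hence $v=Ku^2$ with $K$ fixed by the data, so $\rho_1(t)=(2t+C_1)\left[1-\frac{C_2}{C_1^2}(2t+C_1)\right]$ and $\rho_2(t)=(2t+C_1)\left[1+\frac{C_2}{C_1^2}(2t+C_1)\right]$. Since $C_1-C_2=a_1^2>0$, one reads off that $\rho_1>0$ on a maximal interval $[0,T_c)$ with $\rho_1(T_c)=0$, $\rho_2(T_c)>0$, while the four outer vortices stay at distinct angular positions; hence $T_{\rm max}=T_c$ and exactly $\bx_1(t),\bx_2(t),\bx_5(t)$ form a finite time collision cluster, meeting at the origin. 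For $n=3$ the system collapses to $\dot\rho_1=12\rho_1^{3/2}/(\rho_1^{3/2}+\rho_2^{3/2})$ and $\dot\rho_2=12\rho_2^{3/2}/(\rho_1^{3/2}+\rho_2^{3/2})$, both strictly positive, so $\rho_1,\rho_2$ are increasing, $T_{\rm max}=+\infty$, and $\frac{d}{dt}(\rho_1^{-1/2}-\rho_2^{-1/2})=0$ gives the first integral $\rho_1^{-1/2}-\rho_2^{-1/2}\equiv(a_2-a_1)/(a_1a_2)$, whence $\rho_1(t)<\left(\frac{a_1a_2}{a_2-a_1}\right)^2$ is bounded. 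Here $N^+=2n$, $N^-=1$ give $M_0=n(2n-3)>0$, so by Theorem~\ref{fi}(iii) at least two vortices go to infinity; as $\bx_N$ is fixed at the origin and the inner sub-cluster is bounded, these are the outer ones, so $\rho_2(t)\to+\infty$, and then the first integral forces $\rho_1(t)\to\left(\frac{a_1a_2}{a_2-a_1}\right)^2$ and $\dot\rho_2(t)\to12$, i.e. $\rho_2(t)\sim12t$.

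For $n\ge4$ one has $\dot\rho_1\ge2n-6>0$ and, using $\rho_1<\rho_2$, $\dot\rho_2>6n-6-2n=4n-6>0$, so again $\rho_1,\rho_2$ are increasing and $T_{\rm max}=+\infty$; moreover $\dot\rho_1$ and $\dot\rho_2$ stay between the fixed bounds $2n-6$ and $6n-6$, so $\rho_i(t)/t$ is bounded, and since $M_0>0$ we get $\rho_2(t)\to+\infty$. Passing to the rescaled variables $\rho_i(t)/t$ exactly as in the proof of Proposition~\ref{lem4}, one shows $\rho_1(t)\sim\alpha_6 t$ and $\rho_2(t)\sim\beta_6 t$; feeding these asymptotics into the sum and difference equations above yields $\alpha_6+\beta_6=8n-12$ and $\beta_6-\alpha_6=4n(\beta_6^{n/2}-\alpha_6^{n/2})/(\beta_6^{n/2}+\alpha_6^{n/2})$ with $0<\alpha_6<\beta_6$, and for $n\gg1$ this forces $\beta_6-\alpha_6\approx4n$, hence $\alpha_6\approx2n-6$ and $\beta_6\approx6n-6$. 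The one genuinely non-routine step is the last one, namely proving rigorously that for $n\ge4$ the trajectory enters the self-similar regime $\rho_i(t)\sim c_it$ (equivalently, that the rescaled system has a unique attracting ray); I expect this to follow, as in Proposition~\ref{lem4}, from monotonicity of the ratio $\rho_1/\rho_2$ together with a squeeze argument, while everything else is a direct transcription of the computations of Propositions~\ref{lem4}, \ref{th31} and \ref{lem6}.
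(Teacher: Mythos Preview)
Your proposal is correct and follows essentially the same route as the paper, which simply omits the proof with the remark that it is analogous to Proposition~\ref{lem6}. Your derived reduced system $\dot\rho_1+\dot\rho_2=8n-12$, $\dot\rho_2-\dot\rho_1=4n\,\frac{\rho_2^{n/2}-\rho_1^{n/2}}{\rho_2^{n/2}+\rho_1^{n/2}}$ is exactly what one obtains by transcribing the computation of Proposition~\ref{lem6} with the $\pi/n$-shifted trigonometric identity from Proposition~\ref{th31}, and your case analysis (explicit integration for $n=2$, the first integral $\rho_1^{-1/2}-\rho_2^{-1/2}$ for $n=3$, and the self-similar argument of Proposition~\ref{lem4} for $n\ge4$) mirrors Proposition~\ref{lem6} step for step; you in fact supply more detail than the paper does.
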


\begin{proof} The proof is analogue to that of Proposition
\ref{lem6} and thus it is
omitted here for brevity.
\end{proof}

\begin{proposition}\label{lem2}
Taking $m_N=-m_0$, $m_j=m_0$ and $m_{n+j}=-m_0$ for $1\le j\le n$ and
the initial data $\bX^0$ in \eqref{GLEi} as
\eqref{patI22}, then the solution of the ODEs
\eqref{GLE} with \eqref{patI22} can be given as
 \eqref{solgle22},
where

(i) when $n=2$, only the three vortices $\mathbf x_1(t),\ \mathbf x_2(t)$ and $\mathbf x_5(t)$  be a collision cluster among the $5$ vortices and they will
collide at the origin $(0,0)^T$ in finite time;

(ii) When $n\geq 3$, all the $N=2n+1$ vortices be a collision cluster and they will collide at the origin $(0,0)^T$ when $t\to T_c:=\frac{1}{4}[a_1^2+a_2^2]$.
\end{proposition}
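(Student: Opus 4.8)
The plan is to follow the scheme of Propositions~\ref{lem4}, \ref{th31} and~\ref{lem6}. First I would invoke the $\tfrac{2\pi}{n}$-rotational symmetry of the initial data~\eqref{patI22} together with local uniqueness for~\eqref{GLE} to justify the ansatz~\eqref{solgle22} (the central vortex stays at ${\bf 0}$, each ring stays a concentric regular $n$-gon), which reduces~\eqref{GLE} to a planar system for $(\rho_1,\rho_2)$. Substituting~\eqref{solgle22} into~\eqref{GLE} and splitting the interactions into inner-ring/inner-ring, outer-ring/outer-ring, inner/outer and centre contributions, the centre (winding $-m_0$) attracts the inner ring (winding $m_0$) and repels the outer ring (winding $-m_0$), adding $-4$ to $\dot\rho_1$ and $+4$ to $\dot\rho_2$ relative to Proposition~\ref{th31}; then, using the identity $\sum_{j=1}^{n}\frac{x^{2}-1}{x^{2}+1-2x\cos(\theta_n^1-\theta_n^j+\pi/n)}=n\frac{x^{n}-1}{x^{n}+1}$ with $x=\sqrt{\rho_2/\rho_1}>1$, one obtains
\[
\dot\rho_1=2n-6-\frac{4n\,\rho_1^{n/2}}{\rho_1^{n/2}+\rho_2^{n/2}},\qquad
\dot\rho_2=2-2n+\frac{4n\,\rho_1^{n/2}}{\rho_1^{n/2}+\rho_2^{n/2}},\qquad
\rho_1(0)=a_1^{2}<\rho_2(0)=a_2^{2}.
\]
In particular $\dot\rho_1+\dot\rho_2\equiv-4$, so $\rho_1(t)+\rho_2(t)=a_1^{2}+a_2^{2}-4t$; this also follows from the first integral $H_2$ of Lemma~\ref{lem1}, since here $N^{+}-N^{-}=\pm1$ gives $M_0=-n<0$ and $H_2^0=n(a_1^2+a_2^2)$, so that the bound $\Tmax\le-H_1^0/(4NM_0)$ of Theorem~\ref{fi}(i) becomes $\Tmax\le\tfrac14(a_1^2+a_2^2)=:T_c$ (using $H_1^0=NH_2^0$, which holds because $\bar\bx^0={\bf 0}$).

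Next I would record two inequalities valid on the maximal existence interval of the reduced system. The difference $\rho_2-\rho_1$ stays positive: at a hypothetical first time $t_1$ with $\rho_1(t_1)=\rho_2(t_1)$ one computes $\frac{d}{dt}(\rho_2-\rho_1)\big|_{t_1}=8-4n\cdot0=8>0$, contradicting that $\rho_2-\rho_1>0$ on $[0,t_1)$; hence $\rho_2>\rho_1\ge0$ throughout. Since $\rho_1+\rho_2=a_1^2+a_2^2-4t$, both $\rho_i$ then lie in $\bigl(0,\,a_1^2+a_2^2-4t\bigr)$ as long as $\rho_1>0$ and $t<T_c$; and the configuration~\eqref{solgle22} lies in ${\mathbb R}_*^{2\times N}$ precisely when $\rho_1,\rho_2>0$ and $\rho_1\neq\rho_2$ (distinct radii keep the two rings apart, distinct angles keep same-ring vortices and centre apart). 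Hence $\Tmax$ equals the first time $\rho_1$ hits $0$ or $t$ reaches $T_c$, whichever comes first.

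For (i), $n=2$: the $\rho_1$-equation reads $\dot\rho_1=-2-\tfrac{8\rho_1}{\rho_1+\rho_2}$, so $\dot\rho_1<-2$ while $\rho_1>0$; thus $\rho_1$ reaches $0$ at some $t^{\ast}<\tfrac12a_1^2<T_c$, and then $\rho_2(t^{\ast})=a_1^2+a_2^2-4t^{\ast}>0$. (One may in fact solve this linear ODE with integrating factor $(a_1^2+a_2^2-4t)^{-2}$ and obtain $t^{\ast}=\frac{a_1^2(a_1^2+a_2^2)}{2(3a_1^2+a_2^2)}$.) Consequently $\Tmax=t^{\ast}$ with $\rho_1(\Tmax)=0<\rho_2(\Tmax)$, so exactly $\bx_1,\bx_2$ (inner ring) and $\bx_5$ (centre) converge to the origin while $\bx_3,\bx_4$ stay at positive radius, i.e. $\{\bx_1,\bx_2,\bx_5\}$ is the only collision cluster, which proves (i). For (ii), $n\ge3$: I claim $\rho_1$ never vanishes before $T_c$. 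If $t^{\ast}<T_c$ were its first zero, then on a left-neighbourhood of $t^{\ast}$ one has $\rho_2\ge\tfrac12\rho_2(t^{\ast})=\tfrac12(a_1^2+a_2^2-4t^{\ast})>0$, whence $\frac{\rho_1^{n/2}}{\rho_1^{n/2}+\rho_2^{n/2}}\le c\,\rho_1^{n/2}$ and, since $2n-6\ge0$, $\dot\rho_1\ge-C\rho_1^{n/2}$ there; as $n/2>1$, comparison with $\dot v=-Cv^{n/2}$ (whose positive solutions never reach $0$ in finite time) forces $\rho_1(t^{\ast})>0$, a contradiction. Therefore $\rho_1,\rho_2>0$ on $[0,T_c)$ while $\rho_1(t)+\rho_2(t)=a_1^2+a_2^2-4t\to0$ as $t\to T_c^-$, so $\rho_1(T_c^-)=\rho_2(T_c^-)=0$: all $2n$ ring vortices collapse onto the central vortex, i.e. all $N=2n+1$ vortices form a collision cluster at $\Tmax=T_c=\tfrac14(a_1^2+a_2^2)$, which proves (ii).

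The main obstacle is the borderline case $n=3$, where $2n-6=0$ makes $\dot\rho_1\to0$ as $\rho_1\downarrow0$, so the elementary argument ``$\rho_1$ is increasing near any would-be zero'' (which settles $n\ge4$ at once, since there $\dot\rho_1\to2n-6>0$) breaks down; what rescues it is the sharper decay rate $\dot\rho_1\sim-\mathrm{const}\cdot\rho_1^{3/2}$, valid because $\rho_2$ stays bounded away from $0$ up to $t^{\ast}<T_c$, together with the ODE comparison above. The remaining steps --- justifying the symmetric ansatz, deriving the reduced system via the trigonometric identity, and checking that no collision other than the stated one occurs before $\Tmax$ --- are routine, and the value of $T_c$ is consistent with Proposition~\ref{total} and Theorem~\ref{fi}(i).
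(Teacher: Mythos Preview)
Your derivation of the reduced system for $(\rho_1,\rho_2)$, the treatment of $n=2$, and the conservation law $\rho_1+\rho_2=a_1^2+a_2^2-4t$ all match the paper's proof essentially verbatim (the paper also solves the $n=2$ case explicitly and obtains the same collision time $T_{\rm max}=\frac{a_1^2(a_1^2+a_2^2)}{2(3a_1^2+a_2^2)}$). One cosmetic slip: because the two rings sit at angles offset by $\pi/n$, the configuration remains in $\mathbb{R}_*^{2\times N}$ even when $\rho_1=\rho_2$, so the condition ``$\rho_1\neq\rho_2$'' is not needed; this is harmless since you prove $\rho_2>\rho_1$ anyway.

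For part (ii) your argument is correct but genuinely different from the paper's. The paper does not analyze the $\rho_1$-equation near a hypothetical zero; instead it invokes Theorem~\ref{fi}(iv) twice: the set $\{\bx_{n+1},\ldots,\bx_{2n},\bx_N\}$ has all equal winding numbers, and the set $\{\bx_1,\ldots,\bx_n,\bx_N\}$ has collective winding number $M_1=\frac{1}{2}n(n-3)\ge0$ for $n\ge3$, so neither can be a collision cluster. Since a finite-time collision must occur (as $M_0=-n<0$) and the only collision modes compatible with the ansatz are $\rho_1\to0$, $\rho_2\to0$, or both, this forces both rings to collapse simultaneously at $T_c$. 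Your route---showing directly via the differential inequality $\dot\rho_1\ge -C\rho_1^{n/2}$ and comparison with $\dot v=-Cv^{n/2}$ that $\rho_1$ cannot vanish before $T_c$---is more self-contained and makes transparent why $n=3$ is the borderline case, whereas the paper's argument is shorter and explains structurally why the threshold is exactly $n=3$ (it is where $M_1$ changes sign). Both are valid.
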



\begin{proof} Similar to the proof of Proposition \ref{lem4} and \ref{th31},
the solution of the ODEs
\eqref{GLE} with (\ref{patI22}) can be given as
 \eqref{solgle22}, where
\begin{equation}\label{481}
\dot\rho_1(t)+\dot\rho_2(t)=-4, \qquad
\dot\rho_2(t)-\dot\rho_1(t)=8-4n+\frac{
8n\rho_1^{n/2}(t)}{\rho_2^{n/2}(t)+\rho_1^{n/2}(t)}, \qquad t>0,
\end{equation}
which implies
\be\label{482}
\dot\rho_1(t)=2n-6-\frac{4n\rho_1^{n/2}(t)}
{\rho_2^{n/2}(t)+\rho_1^{n/2}(t)},\qquad \dot\rho_2(t)=2-2n+\frac{4n
\rho_1^{n/2}(t)}
{\rho_2^{n/2}(t)+\rho_1^{n/2}(t)},\qquad t>0.
\ee

Solving \eqref{481} with initial data \eqref{initrh120}, we get
\be
\rho_1(t)+\rho_2(t)=-4t+a_1^2+a_2^2, \qquad t\ge0, \nonumber
\ee
which implies that a finite time collision must happen and
$0<T_{\rm max}\le T_c:=\frac{1}{4}(a_1^2+a_2^2)$.

1) When $n=2$, from \eqref{482}, we obtain
\be\label{rh1283}
\dot\rho_1(t)=-\frac{10\rho_1(t)+2\rho_2(t)}{\rho_1(t)+\rho_2(t)}<0,
\qquad \dot\rho_2(t)=\frac{6\rho_1(t)-2\rho_2(t)}{\rho_1(t)+\rho_2(t)},
\qquad t>0.
\ee
Solving \eqref{rh1283} with the initial data \eqref{initrh120}, we have

\begin{equation*}
\rho_1(t)=(-2t+C_1)(2C_3(-2t+C_1)-1) ,\
  \rho_2(t)=(-2t+C_1)(3-2C_3(-2t+C_1)), \ t\ge0.
\end{equation*}
where $C_3=\frac{3a_1^2+a_2^2}{(a_1^2+a_2^2)^2}>0$.
Denote
\be
0<T_{\rm max}=\frac{2C_1C_3-1}{4C_3}=\frac{a_1^2(a_1^2+a_2^2)}
{6a_1^2+2a_2^2}=\frac{2a_1^2}
{3a_1^2+a_2^2}T_c<T_c, \nonumber
\ee then we have
\be
\rho_1(T_{\rm max})=0,\quad \rho_2(T_{\rm max})>0,\qquad   \rho_1(t)>0,\quad \rho_2(t)>0,
\quad t\in [0,T_{\rm max}), \nonumber
\ee
which implies that only the three vortices $\mathbf x_1(t),\ \mathbf x_2(t)$ and $\mathbf x_5(t)$  be a collision cluster among the $5$ vortices
and they will
collide at the origin $(0,0)^T$ when $t\to T_{\rm max}^-$.

2) When $n\geq 3$, by Theorem \ref{fi}, only the $n+1$ vortices
with $\mathbf x_{n+1}(t),\ldots,\mathbf x_{2n}(t)$ and $\mathbf x_N(t)$  cannot be a collision cluster among
the $N$ vortices since they have the same winding number; and similarly, only the $n+1$ vortices
with $\mathbf x_1(t),\ldots,\mathbf x_n(t)$ and $\mathbf x_N(t)$
 cannot be a collision cluster among the $N$ vortices since
their collective winding number defined as $M_1:=\sum_{1\le j<l\le n}m_jm_l+\sum_{j=1}^n m_jm_N=\frac{1}{2}[(n-1)^2-n-1]=\frac{1}{2}n(n-3)\ge0$.
Thus, in order to have a finite time collision,
there exist $1\leq j_0\leq n$ and $1\leq l_0\leq
n$ such that the vortex dipole $\mathbf x_{j_0}(t)$ and $\mathbf
x_{n+l_0}(t)$ will collide at $t=T_c$, i.e. $\rho_1(T_c)=\rho_2(T_c)=0$.
Therefore, the $N=2n+1$ vortices will be a (finite time) collision cluster.

In addition,
Figure \ref{rho126} depicts the solution
$\rho_1(t)$ and $\rho_2(t)$ of \eqref{482} obtained numerically with $\rho_1(0)=1$ and $\rho_2(0)=4$ for different $n\ge2$.
\end{proof}

\begin{figure}[t!]
\centerline{\psfig{figure=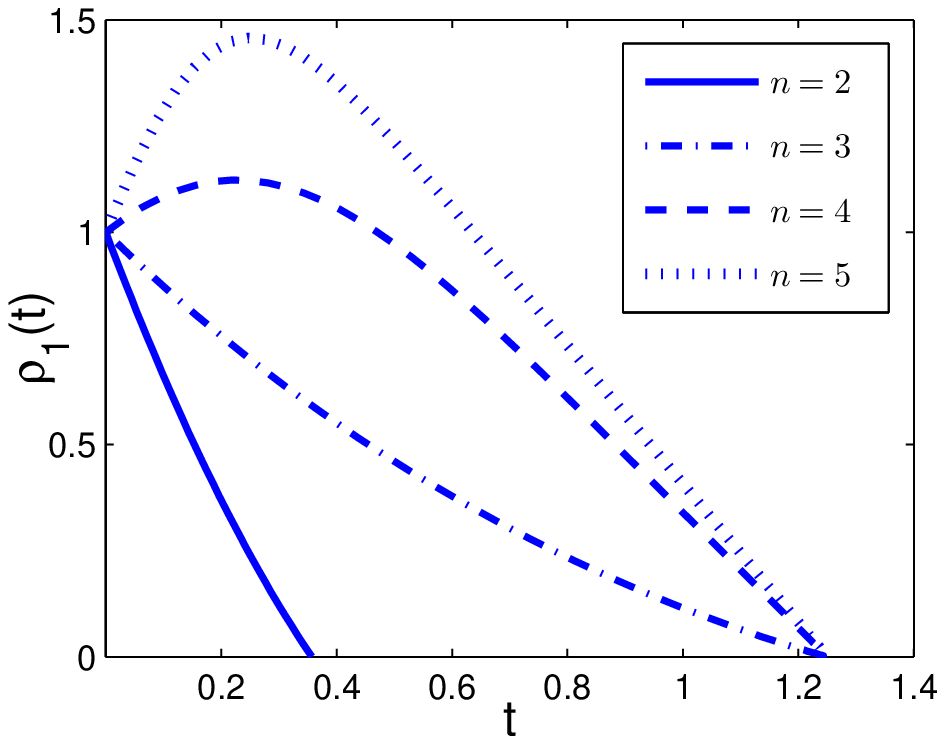,height=5cm,width=6.5cm,angle=0}
\psfig{figure=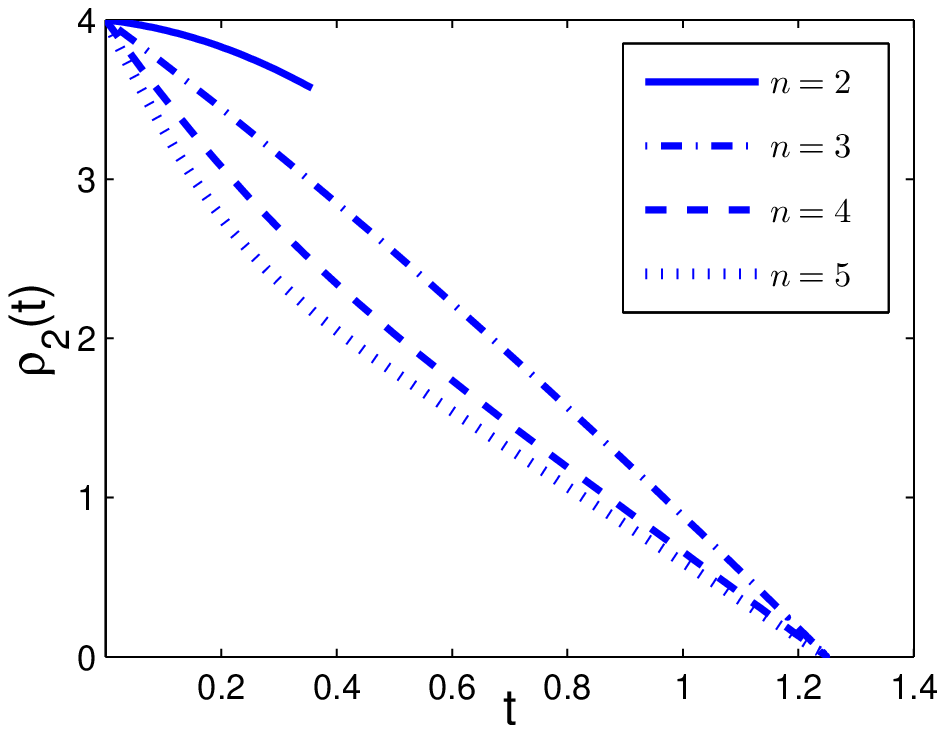,height=5cm,width=6.5cm,angle=0}}
\caption{Time evolution of $\rho_1(t)$ (left) and $\rho_2(t)$ (right)
of \eqref{482} with $\rho_1^0=1$ and $\rho_2^0=4$ for different $n\ge2$.}
\label{rho126}
\end{figure}

\section{Conclusion}
Based on the reduced dynamical law of a system of ordinary differential
equations (ODEs) for the dynamics of $N$ vortex centers, we
have obtained stability and interaction patterns
of quantized vortices in superconductivity.
By deriving several non-autonomous first integrals of the
ODEs system, we proved  global well-posedness
of the $N$ vortices when they have the same winding number and
demonstrated that finite time collision might happen when they
have different winding numbers.
When $N=3$, we established rigorously
orbital stability when they have the same winding number
and classified their collision patterns when they have different
winding numbers. Finally, under several special initial setups
 including interaction of
two clusters, we obtained explicitly the analytical solutions
of the ODEs system. The analytical and numerical results demonstrated
the rich dynamics and interaction patterns of $N$ vortices in
superconductivity.

\section*{Acknowledgments}
This work was supported partially by the Academic Research
Fund of Ministry of Education of Singapore grant No.
R-146-000-223-112 (W.B.) and
by the National Natural Science Foundation of China grant No. 11371166,  11501242 (S.S. and Z.X.).



\end{document}